\newtheorem{theorem}{Theorem}[section]
\newtheorem{lemma}[theorem]{Lemma}
\theoremstyle{definition}
\title{\textbf{$K_4$-free character graphs with diameter three}}
\author{Mahdi Ebrahimi\footnote{ m.ebrahimi.math@ipm.ir}
 \\
 {\small\em  School of Mathematics, Institute for Research in Fundamental Sciences (IPM)},\\{\small\em P.O. Box: 19395--5746, Tehran, Iran}\footnote{This research was supported by a grant from IPM.}}
\date{}
\begin{document}

\maketitle

\begin{abstract}
Let $G$ be a finite group and let $\rm{Irr}(G)$ be the set of all irreducible complex characters of $G$. Let $\rm{cd}(G)$ be the set of all character degrees of $G$ and denote by $\rho(G)$ the set of primes which divide some character degrees in $\rm{cd}(G)$. The character graph $\Delta(G)$ associated to $G$ is a graph whose vertex set is  $\rho(G)$ and there is an edge between two distinct primes $p$ and $q$ if and only if the product $pq$ divides some character degree of $G$. Suppose the character graph $\Delta(G)$ is $K_4$-free with diameter $3$.
 In this paper, we show that $|\rho(G)|\neq 5$, if and only if $G\cong J_1 \times A$, where $J_1$ is the first Janko's sporadic simple group and $A$ is abelian.

 \end{abstract}
\noindent {\bf{Keywords:}}  Character graph, Character degree, Irreducible module. \\
\noindent {\bf AMS Subject Classification Number:}  20C15, 05C12, 05C25.

\section{Introduction}
$\indent$ Let $G$ be a finite group. Also let ${\rm cd}(G)$ be the set of all character degrees of $G$, that is,
 ${\rm cd}(G)=\{\chi(1)|\;\chi \in {\rm Irr}(G)\} $, where ${\rm Irr}(G)$ is the set of all complex irreducible characters of $G$. The set of prime divisors of character degrees of $G$ is denoted by $\rho(G)$.

A useful way to study the character degree set of a finite group $G$ is to associate a graph to ${\rm cd}(G)$.
One of these graphs is the character graph $\Delta(G)$ of $G$ (see \cite{[I]}). Its vertex set is $\rho(G)$ and two vertices $p$ and $q$ are joined by an edge if the product $pq$ divides some character degree of $G$. We refer the readers to a survey by Lewis \cite{[M]} for results concerning this graph and related topics.

The diameter of $\Delta(G)$ denoted by $\rm{diam}(\Delta(G))$ is an important parameter to determine the structure of $G$.
 It was shown that $\rm{diam}(\Delta(G))\leqslant 3$ (see \cite{[DM]} and \cite{[P]}). When $\rm{diam}(\Delta(G))=3$,  $ \rho(G) $ can be partitioned  as $ \rho_{1} \cup \rho_{2} \cup \rho_{3} \cup \rho_{4} $ where no prime in $ \rho_{1} $ is adjacent to any prime in $ \rho_{3} \cup \rho_{4} $ and no prime in $ \rho_{4} $ is adjacent to any prime in $ \rho_{1}  \cup \rho_{2 } $, every prime in $  \rho_{2 } $ is adjacent to some primes in $ \rho_{3} $ and vice-versa, and $ \rho_{1}  \cup \rho_{2 } $ and $ \rho_{3} \cup \rho_{4} $ both determine complete subgraphs of $ \Delta(G)$ (see \cite{ams} and  \cite{[A]}). Casolo et al. \cite{[SDM]} has proved that if for a finite solvable group $G$,  $\Delta(G)$ is connected with diameter $3$, then there exists a prime $p$ such that $G=PH$, with $P$ a normal non-abelian Sylow $p$-subgroup of $G$ and $H$ a $p$-complement. The character graph of the first Janko's sporadic simple group $J_1$ is the only known example for $K_4$-free character graphs with diameter three. It motivates the following conjecture:\\

\noindent \textbf{Conjecture.} Suppose $G$ is a finite group. Then $\Delta(G)$ is a $K_4$-free graph with diameter three, if and only if $G\cong J_1 \times A$, where $A$ is abelian.\\

\noindent In this paper, we wish to solve this conjecture for the case $|\rho(G)|\neq 5$.\\

\noindent \textbf{Main Theorem.}  \textit{Let $G$ be a finite group and $\Delta(G)$ be a $K_4$-free graph with diameter $3$. Then $|\rho(G)|\neq 5$, if and only if $G\cong J_1 \times A$, where $A$ is abelian.}

\section{Preliminaries}
$\indent$ In this paper, all groups are assumed to be finite and all
graphs are simple and finite. Note that when we require the structure of maximal subgroups and character table of simple groups, we will use Atlas of finite groups \cite{[At]}. The finite field with $q$ elements is denoted by $GF(q)$. For an integer $n\geqslant 1$, we use the notation $\pi(n)$, for the set of prime divisors of $n$. For a finite group $G$, the set of prime divisors of $|G|$ is denoted by $\pi(G)$. Also the solvable radical of $G$ is denoted by $R(G)$. We will use Zsigmondy's Theorem which can be found in \cite{zsi}.

Let $G$ be a finite group. If $H$ is a subgroup of $G$ and $\theta \in \rm{Irr}(H)$, we denote by $\rm{Irr}(G|\theta)$ the set of irreducible characters of $G$ lying over $\theta$ and define $\rm{cd}(G|\theta):=\{\chi(1)|\,\chi \in \rm{Irr}(G|\theta)\}$. The set of linear characters of $G$ denoted by $\hat{G}$ is an abelian group under the multiplication $\lambda_1\lambda_2(g):=\lambda_1(g)\lambda_2(g)$, where $g\in G$ and $\lambda_1,\lambda_2\in \hat{G}$. The order of a linear character $\lambda$ in $\hat{G}$ is denoted by $o(\lambda)$. Suppose $N\lhd G$ and $\theta$ is an irreducible character of $N$. The inertia subgroup of $\theta$ in $G$ is denoted by $I_G(\theta)$. We frequently use, Clifford's Theorem which can be found as Theorem 6.11 of \cite{[isa]} and Gallagher's Theorem which is Corollary 6.17 of \cite{[isa]}. We begin with Corollary 11.29 of \cite{[isa]}.

\begin{lemma}\label{fraction}
Let $ N \lhd G$ and $\varphi \in \rm{Irr}(N)$. Then for every $\chi \in \rm{Irr}(G|\varphi)$, $\chi(1)/\varphi(1)$ divides $[G:N]$.
\end{lemma}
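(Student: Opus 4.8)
This is a standard consequence of Clifford theory, and the plan is to reduce it, in two steps, to Ito's theorem on the degrees of characters lying over an abelian normal subgroup. First I would dispose of the non-invariant case via the Clifford correspondence: writing $I=I_G(\varphi)$, if $I<G$ then each $\chi\in\mathrm{Irr}(G\mid\varphi)$ is induced, $\chi=\psi^{G}$ for a unique $\psi\in\mathrm{Irr}(I\mid\varphi)$, so that $\chi(1)=[G:I]\,\psi(1)$ and hence $\chi(1)/\varphi(1)=[G:I]\cdot\big(\psi(1)/\varphi(1)\big)$. As $[G:N]=[G:I]\,[I:N]$, the assertion for $G$ follows from the same assertion for the smaller data $(I,N,\varphi)$, so I may assume $\varphi$ is $G$-invariant.

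With $\varphi$ $G$-invariant, I would first note the easy sub-case: if $\varphi$ extends to some $\chi_0\in\mathrm{Irr}(G)$, Gallagher's theorem identifies $\mathrm{Irr}(G\mid\varphi)$ with $\{\beta\chi_0:\beta\in\mathrm{Irr}(G/N)\}$, whence $\chi(1)/\varphi(1)=\beta(1)$ divides $|G/N|=[G:N]$. In general $\varphi$ need not extend, and here I would invoke the central reduction for character triples: the triple $(G,N,\varphi)$ is isomorphic to a character triple $(\Gamma,A,\lambda)$ with $A\leqslant Z(\Gamma)$ and $\lambda$ linear (a standard result from \cite{[isa]}). Such an isomorphism maps $\mathrm{Irr}(G\mid\varphi)$ bijectively onto $\mathrm{Irr}(\Gamma\mid\lambda)$, preserves the index $[G:N]=[\Gamma:A]$, and preserves degree ratios, so that if $\chi$ corresponds to $\chi^{*}$ then $\chi(1)/\varphi(1)=\chi^{*}(1)/\lambda(1)=\chi^{*}(1)$. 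It therefore suffices to show $\chi^{*}(1)\mid[\Gamma:A]$, and since $A$ is an abelian normal subgroup of $\Gamma$ this is exactly Ito's theorem.

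I do not expect a genuine obstacle; the one point that must be handled with care is that the character-triple isomorphism used in the second step is known to preserve both the index $[G:N]$ and the ratio $\chi(1)/\varphi(1)$ — precisely the feature that makes passing to a central subgroup legitimate — after which Ito's theorem finishes the argument. A self-contained alternative, which I might use instead, is to work inside $I_G(\varphi)$: by Clifford's theorem $\chi_N=e\sum_i\varphi_i$ with $t:=[G:I_G(\varphi)]$ conjugates, and the ramification index $e$ is the degree of an irreducible projective representation of $I_G(\varphi)/N$, hence divides $[I_G(\varphi):N]$; combining, $\chi(1)/\varphi(1)=et$ divides $[G:I_G(\varphi)]\,[I_G(\varphi):N]=[G:N]$.
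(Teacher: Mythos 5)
Your argument is correct and is essentially the standard one: the paper gives no proof of this lemma, quoting it as Corollary 11.29 of Isaacs, and Isaacs' own proof proceeds exactly as you do — Clifford correspondence to reduce to the $G$-invariant case, then a character-triple isomorphism $(G,N,\varphi)\to(\Gamma,A,\lambda)$ with $A$ central (which preserves $[G:N]$ and the degree ratios), finished by Ito's theorem. Nothing further is needed.
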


\begin{lemma}\label{good}\cite{[Ton]}
Let $N$ be a normal subgroup of a group $G$ such that $G/N\cong S$, where $S$ is a non-abelian simple group. Let $\theta \in \rm{Irr}(N)$. Then either $\chi (1)/\theta(1)$ is divisible by two distinct primes in $\pi(G/N)$ for some $\chi \in \rm{Irr}(G|\theta)$ or $\theta$ is extendible to $\theta_0\in \rm{Irr}(G)$ and $G/N\cong A_5$ or $\rm{PSL}_2(8)$.
\end{lemma}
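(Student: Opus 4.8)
The plan is to work entirely with the multiset of ratios $D:=\{\chi(1)/\theta(1):\chi\in\mathrm{Irr}(G\mid\theta)\}$ and to prove the slightly restated claim that $D$ contains an integer with at least two distinct prime divisors, \emph{unless} $\theta$ extends to $G$ and $G/N\cong A_5$ or $\mathrm{PSL}_2(8)$. The reduction to this restatement is immediate: by Lemma~\ref{fraction} every element of $D$ divides $[G:N]=|S|$, so any two distinct primes dividing some $\chi(1)/\theta(1)$ automatically lie in $\pi(G/N)$. Thus the placement of the primes is free, and the entire content is to locate a single non-prime-power ratio in $D$.

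First I would split on whether $\theta$ is $G$-invariant. If it is not, set $T:=I_G(\theta)$ and $M:=T/N$, a proper subgroup of the simple group $S$. By the Clifford correspondence every $\chi\in\mathrm{Irr}(G\mid\theta)$ equals $\psi^G$ for a unique $\psi\in\mathrm{Irr}(T\mid\theta)$, and $\chi(1)/\theta(1)=[S:M]\cdot(\psi(1)/\theta(1))$. If $[S:M]$ already has two prime divisors we are done, so assume $[S:M]=p^a$. Here I would invoke Guralnick's classification of non-abelian simple groups possessing a subgroup of prime-power index; for each of the finitely many resulting families ($A_n$ with $M=A_{n-1}$ and $n=p^a$, the point/hyperplane stabilizers in $\mathrm{PSL}_n(q)$, and the exceptions $\mathrm{PSL}_2(11)$, $M_{11}$, $M_{23}$, $\mathrm{PSU}_4(2)$) one exhibits a $\psi$ with $\psi(1)/\theta(1)$ divisible by a prime different from $p$, so that $\chi(1)/\theta(1)$ acquires a second prime. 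Note that $\theta$ cannot extend when it is not even invariant, so the exceptional alternative is unavailable and we are \emph{forced} into the first alternative in this case, which is exactly what must be established.

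For the $G$-invariant case I would pass to the character triple $(G,N,\theta)$ and use the standard isomorphism of character triples (Isaacs, Chapter~11) to a triple $(\Gamma,Z,\lambda)$ with $Z$ cyclic and central in $\Gamma$ and $\Gamma/Z\cong S$; under this correspondence $D$ equals the set of degrees of $\mathrm{Irr}(\Gamma\mid\lambda)$, i.e. the degrees of the irreducible projective representations of $S$ attached to the cohomology class of $\theta$. If that class is trivial, then $\theta$ extends to some $\theta_0$ and Gallagher's theorem gives $D=\mathrm{cd}(S)$; invoking the classification of non-abelian simple groups all of whose character degrees are prime powers, namely $A_5$ (degrees $1,3,4,5$) and $\mathrm{PSL}_2(8)$ (degrees $1,7,8,9$), yields either a non-prime-power degree (first alternative) or precisely the exceptional conclusion. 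If the class is nontrivial, the degrees in $D$ are genuine degrees of a representation group (Schur cover) of $S$ lying over a nontrivial central character, and I would check, using known character tables of the covers, that such a degree divisible by two primes always exists; for instance the cover $\mathrm{SL}_2(5)$ of $A_5$ contributes the degree $6=2\cdot 3$, while $\mathrm{PSL}_2(8)$ has trivial Schur multiplier so this subcase does not arise there.

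The main obstacle is the uniform, classification-dependent verification hidden in the last two steps: one must know across \emph{all} non-abelian simple groups that the only ones with every ordinary character degree a prime power are $A_5$ and $\mathrm{PSL}_2(8)$, and simultaneously that every relevant cover contributes a genuine projective character degree with two distinct prime divisors, together with the case analysis for prime-power-index subgroups in the non-invariant branch. Each individual check is routine given the Atlas and Guralnick's theorem, but marshalling them into a single case-free statement is where the real labour lies. I would organize the write-up so that Burnside's $p^aq^b$-theorem (guaranteeing that $|S|$ has at least three prime divisors) and Lemma~\ref{fraction} absorb all the bookkeeping about where the primes live, leaving only the existence of one non-prime-power ratio to be settled in each family.
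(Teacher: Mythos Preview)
The paper does not give its own proof of this lemma: it is quoted verbatim from Tong-Viet's paper \cite{[Ton]} and used as a black box, so there is no in-paper argument to compare your proposal against.

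On its own merits your outline is sound and is essentially the strategy behind the original result: split on invariance of $\theta$; in the invariant case pass via a character-triple isomorphism to projective degrees of $S$, invoke Gallagher when the cocycle is trivial and the classification of simple groups with only prime-power character degrees (yielding exactly $A_5$ and $\mathrm{PSL}_2(8)$), and handle nontrivial cocycles via the Schur covers; in the non-invariant case factor out $[S:M]$ and appeal to Guralnick's prime-power-index theorem for the residual cases. The one place where your write-up is thinner than a full proof is the sentence ``one exhibits a $\psi$ with $\psi(1)/\theta(1)$ divisible by a prime different from $p$'' in the Guralnick branch: this is genuinely a case-by-case verification over the point/hyperplane stabilisers in $\mathrm{PSL}_n(q)$, $A_{p^a-1}$ inside $A_{p^a}$, and the sporadic exceptions, and it typically requires either a further inductive appeal (since $T/N=M$ again has a non-abelian composition factor in most cases) or direct degree information for $M$. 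You flag this yourself as ``where the real labour lies'', which is accurate; just be aware that in a formal proof this step cannot be left as a one-liner.
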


 \begin{lemma}\label{equality}\cite{hup2}
 Let $p$ and $r$ be two primes and let $f,m\geqslant 1$ be integers such that $p^f+1=r^m$. Then one of the following cases holds:\\
\textbf{ a)} $p=2$, $f=3$, $r=3$ and $m=2$;\\
 \textbf{b)} $p=2$, $m=1$, $r=2^f+1$ is a Fermat prime, and $f$ is a $2$-power;\\
\textbf{ c)} $r=2$, $f=1$, $p=r^m-1$ is a Mersenne prime, and $m$ is also a prime.
 \end{lemma}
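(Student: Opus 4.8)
The plan is to prove this by entirely elementary factorization and congruence arguments, splitting first on the parity of $p$, since in the equation $p^f+1=r^m$ exactly one of the primes $p,r$ can be even. If $p$ is odd then $p^f+1$ is even, forcing $r=2$ and hence $p^f=2^m-1$; if $p=2$ then $2^f+1$ is odd, forcing $r$ to be odd. This reduces the problem to two independent branches, one of which will yield case (c) and the other cases (a) and (b).

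In the branch where $p$ is odd and $r=2$, I would split on the parity of $f$. If $f$ is even, then $p^f=(p^{f/2})^2$ is an odd square, so $p^f\equiv 1\pmod 8$, whence $2^m=p^f+1\equiv 2\pmod 8$ forces $m=1$ and then $p^f=1$, which is impossible. If $f$ is odd and $f\geq 3$, I would use the factorization $p^f+1=(p+1)(p^{f-1}-p^{f-2}+\cdots-p+1)$; the second factor is a sum of an odd number of odd terms, hence is odd, and since it divides the power of two $2^m$ it must equal $1$. This is absurd, since for $f\geq 3$ and $p\geq 3$ that factor is at least $p^2-p+1\geq 7$. The only remaining possibility is $f=1$, so $p=2^m-1$. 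Finally $2^m-1$ can be prime only when $m$ is prime, because a proper factorization $m=ab$ produces the nontrivial divisor $2^a-1$; this is exactly case (c).

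In the branch where $p=2$ and $r$ is odd, I would split on $m$. If $m=1$ then $r=2^f+1$, and since $2^e+1$ divides $2^{de}+1$ whenever $d$ is odd, $2^f+1$ can be prime only if $f$ has no odd divisor greater than $1$, i.e.\ $f$ is a power of $2$; this is case (b). If $m\geq 2$, write $2^f=r^m-1=(r-1)(r^{m-1}+\cdots+1)$, so both factors are powers of $2$. When $m$ is odd the cofactor is again a sum of an odd number of odd terms, hence odd and larger than $1$, contradicting that it divides $2^f$; so $m$ is even, say $m=2k$. Then $2^f=(r^k-1)(r^k+1)$ exhibits two powers of $2$ differing by $2$, which forces $r^k-1=2$ and $r^k+1=4$. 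Hence $r^k=3$, giving $r=3$, $k=1$, $m=2$, and $2^f=8$ yields $f=3$: exactly case (a).

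The arguments are short, and the one step I would verify most carefully is the case $p=2$, $m\geq 2$, since that is precisely where the single sporadic solution $2^3+1=3^2$ arises. The crucial reductions there are showing that $m$ must be even and then reading off the solution from the elementary fact that the only two powers of $2$ differing by $2$ are $2$ and $4$; everything else follows by parity and the standard cyclotomic-type factorizations of $x^n\pm 1$.
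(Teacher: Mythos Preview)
Your proof is correct and self-contained. Note, however, that the paper does not actually prove this lemma: it is quoted from \cite{hup2} with no argument given. So there is no ``paper's own proof'' to compare against; you have supplied an elementary proof where the paper merely cites the literature.

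Your approach---splitting on the parity of $p$, then using the cyclotomic factorizations $x^n+1=(x+1)(x^{n-1}-\cdots+1)$ for odd $n$ and $r^{2k}-1=(r^k-1)(r^k+1)$, together with parity/congruence considerations---is the standard elementary route to this classical Catalan-type statement, and every step checks out. The one place worth tightening slightly in writing is the final step: when you say ``two powers of $2$ differing by $2$ forces $r^k-1=2$, $r^k+1=4$'', you are implicitly using that $\gcd(r^k-1,r^k+1)=2$ (since $r$ is odd) so that, after pulling out the common factor $2$, the two cofactors are coprime and multiply to a power of $2$, hence one of them equals $1$. Making that explicit removes any doubt, but the conclusion is correct as stated.
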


Let $\Gamma$ be a graph with vertex set $V(\Gamma)$ and edge set
$E(\Gamma)$. If $E(\Gamma)=\emptyset$, $\Gamma$ is called an empty graph. The complement of $\Gamma$ and the induced subgraph of $\Gamma$ on $X\subseteq V(\Gamma)$
 are denoted by $\Gamma^c$ and  $\Gamma[X]$, respectively. We use the notation $\rm{diam}(\Gamma)$ for the diameter of $\Gamma$.  If $\Gamma_1,\Gamma_2,...,\Gamma_n$ are connected components of $\Gamma$, we use the notation $\Gamma=\Gamma_1\cup \Gamma_2\cup ...\cup \Gamma_n$ to determine the connected components of $\Gamma$.  Now let $\Delta$ be a graph with vertex set $V(\Delta)$. The join $\Gamma \ast \Delta$ of graphs $\Gamma$
and $\Delta$ is the graph $\Gamma \cup \Delta$ together with all
edges joining $V (\Gamma)$ and $V (\Delta)$. Also note that a complete graph with $n$ vertices is denoted by $K_n$. Now we state some relevant results on character graphs
needed in the next sections.

\begin{lemma}\label{part}\cite{ams}
Let $G$ be a finite non-solvable group with $\rm{diam}(\Delta(G))=3$. Then the complement of $\Delta(G)$ is by partite.
\end{lemma}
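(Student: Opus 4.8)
\textbf{Proof proposal for Lemma \ref{part}.}

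The plan is to exploit the four-part partition $\rho(G)=\rho_1\cup\rho_2\cup\rho_3\cup\rho_4$ that is guaranteed (for any group with $\mathrm{diam}(\Delta(G))=3$) by the structural results already quoted in the introduction, and to show that in the present non-solvable setting the complement $\Delta(G)^c$ splits into exactly two sides. Recall from the cited partition that no prime of $\rho_1$ is adjacent in $\Delta(G)$ to any prime of $\rho_3\cup\rho_4$, and no prime of $\rho_4$ is adjacent to any prime of $\rho_1\cup\rho_2$, while $\rho_1\cup\rho_2$ and $\rho_3\cup\rho_4$ each induce complete subgraphs of $\Delta(G)$. Translating into the complement: $\rho_1\cup\rho_2$ and $\rho_3\cup\rho_4$ each induce empty subgraphs of $\Delta(G)^c$, so each is an independent set in the complement. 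The natural candidate bipartition of $\Delta(G)^c$ is therefore $\bigl(\rho_1\cup\rho_2,\ \rho_3\cup\rho_4\bigr)$, and it remains only to check that this is a genuine $2$-coloring, i.e. that every edge of $\Delta(G)^c$ runs between the two parts.

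First I would record the structural consequences of non-solvability. Since $\mathrm{diam}(\Delta(G))=3$ forces $\Delta(G)$ to be connected, and $G$ is non-solvable, I would invoke the standard dichotomy separating the solvable case (handled by Casolo et al., where a normal non-abelian Sylow subgroup appears) from the non-solvable case; the point is that the non-solvable hypothesis is precisely what rules out one of the partition types and pins down the shape of the complement. Concretely, the goal reduces to verifying that there are no edges of $\Delta(G)^c$ internal to $\rho_1\cup\rho_2$ and none internal to $\rho_3\cup\rho_4$, which is immediate from completeness of these two sets in $\Delta(G)$, together with the fact that any remaining complement-edge must join a vertex of $\{\rho_1,\rho_2\}$ to a vertex of $\{\rho_3,\rho_4\}$. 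The non-adjacencies $\rho_1\not\sim\rho_3\cup\rho_4$ and $\rho_4\not\sim\rho_1\cup\rho_2$ in $\Delta(G)$ give genuine complement-edges crossing the partition, confirming the two sides are nonempty and that the coloring is proper.

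The main obstacle I anticipate is confirming that the four-part partition genuinely has the stated separation properties in the non-solvable case and that no ``stray'' adjacency spoils the bipartition — in particular handling primes of $\rho_2$ and $\rho_3$, which are allowed to be adjacent to each other in $\Delta(G)$ and so contribute non-edges to the complement that must be shown to cross the partition correctly. I would address this by checking each of the four possible within-part and cross-part prime pairs against the adjacency rules of the diameter-$3$ partition: a complement-edge between $p,q$ means $pq$ divides no character degree, and one verifies case by case that such non-adjacency in $\Delta(G)$ can only occur when $p,q$ lie on opposite sides of the proposed bipartition. Once every complement-edge is shown to cross $\bigl(\rho_1\cup\rho_2,\ \rho_3\cup\rho_4\bigr)$, the complement is bipartite with these two color classes, which is exactly the assertion of the lemma.
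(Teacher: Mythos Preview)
The paper does not prove Lemma~\ref{part}; it is quoted from \cite{ams} without proof, so there is no in-paper argument to compare against.

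Your core observation is correct and is in fact the entire argument: once the diameter-$3$ partition $\rho_1\cup\rho_2\cup\rho_3\cup\rho_4$ is granted with $\rho_1\cup\rho_2$ and $\rho_3\cup\rho_4$ each inducing complete subgraphs of $\Delta(G)$, these two sets are independent in $\Delta(G)^c$, and a graph whose vertex set splits into two independent sets is bipartite by definition. Your ``main obstacle'' paragraph and the promised case analysis are superfluous: any vertex pair not internal to $\rho_1\cup\rho_2$ or to $\rho_3\cup\rho_4$ is automatically a cross-partition pair, so there is nothing further to verify. Likewise, the remark that non-solvability ``rules out one of the partition types'' plays no role in the logical step; the non-solvable hypothesis in the lemma just reflects the scope of the cited source.

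One caveat worth flagging: the four-part partition in the introduction is itself cited from \cite{ams} (together with \cite{[A]} for the solvable side), the same reference as Lemma~\ref{part}. If in \cite{ams} bipartiteness of the complement is established first and the partition derived from it, then your argument is circular. As an internal deduction within the present paper your reasoning is fine, but as a standalone proof of the lemma it depends on which statement is primitive in \cite{ams}.
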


\begin{lemma}\label{path}\cite{zhang}
Suppose $G$ is a finite group. Then the character graph $ \Delta(G) $ is not a path of length four.
\end{lemma}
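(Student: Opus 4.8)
The plan is to argue by contradiction. A path of length four, read as a graph with four edges, has five vertices and diameter four, so it is excluded at once by the bound $\mathrm{diam}(\Delta(G))\le 3$ recalled in the introduction; the substantive statement is therefore that $\Delta(G)$ cannot be the path on four vertices, which is the unique connected graph on four vertices of diameter three. So suppose $\Delta(G)$ is this path, with vertices $p_1,p_2,p_3,p_4$ and edges $p_1p_2$, $p_2p_3$, $p_3p_4$; then $\mathrm{dist}(p_1,p_4)=3$ and the partition of $\rho(G)$ recalled in the introduction reads $\rho_i=\{p_i\}$ for $i=1,2,3,4$. Since deleting an abelian direct factor leaves $\Delta(G)$ unchanged, I may assume $G$ has none, and I would then split into the solvable and non-solvable cases.

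If $G$ is solvable, I would invoke the theorem of Casolo et al.\ quoted in the introduction: $G=PH$ with $P$ a normal non-abelian Sylow $p$-subgroup and $H$ a $p$-complement, and $p$ is one of the four vertices. The irreducible characters of $G$ over the trivial character of $P$ are inflations from $G/P\cong H$, so $\Delta(H)$ embeds in $\Delta(G)$; those over a nonprincipal $\theta\in\mathrm{Irr}(P)$ have degree divisible by $p$, with $\chi(1)/\theta(1)$ dividing $[G:P]=|H|$ by Lemma~\ref{fraction}. Bookkeeping which primes $q\neq p$ can share a single degree with $p$, together with the fine structure attached to the decomposition $\rho_1\cup\rho_2\cup\rho_3\cup\rho_4$, I expect to force either a fifth prime or an edge between two of the $p_i$ that are non-adjacent in the path; either outcome contradicts the hypothesis. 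This reduces the solvable case to elementary degree arithmetic.

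The non-solvable case is where the real work lies and is the main obstacle. By Lemma~\ref{part} the complement $\Delta(G)^{c}$ is bipartite, which is consistent with a path and so does not settle matters by itself; instead I would use that a non-abelian composition factor $S$ of $G$ satisfies $\rho(S)\subseteq\rho(G)$, whence $|\rho(S)|\le 4$. I would then run through the short list of simple groups with at most four prime divisors, using the Atlas, Zsigmondy's theorem, and Lemma~\ref{equality} to pin down the relevant degrees of the form $p^{f}\pm 1$, and propagate these degrees up to $G$ by Lemma~\ref{good}. In each case the aim is to exhibit a single character degree divisible by two primes occupying non-adjacent positions of the path—such as one divisible by $p_1p_3$, $p_1p_4$, or $p_2p_4$—which is forbidden in a path on four vertices. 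The genuine difficulty is carrying this out uniformly over all candidate $S$; I expect that, for each family, a Zsigmondy prime paired with Lemma~\ref{equality} yields a degree whose prime divisors cannot all be confined to one end of the path, giving the required contradiction and showing that $\Delta(G)$ is not a path of length four.
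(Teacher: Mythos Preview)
This lemma carries no proof in the paper: it is quoted wholesale from Zhang \cite{zhang} and used only as a black box (in the proof of Lemma~\ref{as}, to rule out $|\rho(G)|=4$). There is therefore no argument in the paper against which your attempt can be compared.

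On your reading of the statement: your observation that the five-vertex path is already excluded by the diameter bound is correct, and your conclusion that the operative content must be ``$\Delta(G)$ is not the path $P_4$'' matches exactly how the paper uses the lemma. As for the plan itself, the scaffolding is sensible but both branches stop at ``I expect a contradiction.'' In the solvable case you invoke Casolo--Dolfi--Pacifici--Sanus, which postdates Zhang by almost twenty years and in any event you never say which character degree actually produces the forbidden edge; the phrase ``bookkeeping\ldots I expect to force'' is not yet an argument. In the non-solvable case the gap is wider: Lemma~\ref{good} only guarantees that \emph{some} two primes of $S$ divide a degree over $\theta$, not which two, so it does not by itself give you an edge between a prescribed non-adjacent pair of the path; and the proposed case analysis over simple groups with at most four prime divisors is asserted rather than carried out. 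So while the overall strategy is plausible, the steps that would actually deliver the contradiction are missing.
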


\begin{lemma} \label{trick}\cite{AC}
Let $G$ be a finite group. Also let $\mathcal{K}$ be any (non-empty) set of normal subgroups of $G$ isomorphic to  $\rm{PSL}_2(u^\alpha)$ or $\rm{SL}_2(u^\alpha)$, where $u^\alpha\geqslant 4$ is a prime power (possibly with different values of $u^\alpha$). Define $K$ as the product of all the subgroups in $\mathcal{K}$ and $C:=C_G(K)$. Then every prime $t$ in $\rho(C)$ is adjacent  in $\Delta(G)$ to all the primes $q$ (different from $t$) in $|G/C|$, with the possible exception of $(t,q)=(2,u)$ when $|\mathcal{K}|=1$, $K\cong \rm{SL}_2(u^\alpha)$ for some $u\neq 2$ and $Z(K)=P^\prime$, $P\in \rm{Syl}_2(C)$. In any case, $\rho(G)=\rho(G/C)\cup \rho(C)$.
\end{lemma}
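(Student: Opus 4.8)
The plan is to reduce the adjacency assertion to a single divisibility statement about irreducible characters of the central product $KC$, and then to analyse $\mathrm{Irr}(K)$ one simple factor at a time. Write $\mathcal{K}=\{S_1,\dots,S_n\}$, where $S_i\cong\mathrm{PSL}_2(u_i^{\alpha_i})$ or $\mathrm{SL}_2(u_i^{\alpha_i})$. Since each $S_i$ is a normal (quasi)simple subgroup of $G$, distinct factors commute elementwise, so $K=S_1\cdots S_n$ is a central product and $Z:=Z(K)=K\cap C$; as $C=C_G(K)$ centralises $Z\le K$ and contains $Z$, we have $Z\le Z(C)$, and $KC$ is the central product of $K$ and $C$ amalgamated over $Z$. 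Given $t\in\rho(C)$, fix $\psi\in\mathrm{Irr}(C)$ with $t\mid\psi(1)$ and let $\lambda:=\psi|_Z$, a linear character of $Z$ since $Z\le Z(C)$. The key reduction is: if there exists $\theta\in\mathrm{Irr}(K)$ with $\theta|_Z=\theta(1)\lambda$ and $q\mid\theta(1)$, then $\psi$ and $\theta$ agree on $Z$, so the product $\psi\theta$ is an irreducible character of $KC$ of degree $\psi(1)\theta(1)$, divisible by $tq$. Because $KC\lhd G$, Clifford's theorem gives $\psi(1)\theta(1)\mid\chi(1)$ for every $\chi\in\mathrm{Irr}(G\mid\psi\theta)$, whence $tq\mid\chi(1)$ and $t,q$ are adjacent in $\Delta(G)$. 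Thus adjacency of $t$ and $q$ follows as soon as a compatible $\theta$ with $q\mid\theta(1)$ is produced.

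Next I would construct such a $\theta$. Factorising $\theta=\prod_i\theta_i$ and $\lambda=\prod_i\lambda_i$ over the factors, the requirement becomes $\theta_i|_{Z(S_i)}=\theta_i(1)\lambda_i$ for each $i$. For $\mathrm{PSL}_2(u^\alpha)$, and for characters of $\mathrm{SL}_2(u^\alpha)$ trivial on the centre, the degrees $u^\alpha$ and $u^\alpha\pm1$ already cover every prime divisor of $|S_i|$; so whenever $q\mid|K/Z|$ and $\lambda$ is trivial we are done at once. The only obstruction is a factor $S_i\cong\mathrm{SL}_2(u_i^{\alpha_i})$ with $u_i$ odd and $\lambda_i$ nontrivial on $Z(S_i)\cong\mathbb{Z}_2$: here $\theta_i$ must be a \emph{faithful} character of $\mathrm{SL}_2(u_i^{\alpha_i})$, whose degrees are $u_i^{\alpha_i}\pm1$ and $(u_i^{\alpha_i}\pm1)/2$. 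These cover every prime dividing $|S_i|$ except $u_i$ itself, since the only irreducible character of $\mathrm{SL}_2(u_i^{\alpha_i})$ of degree divisible by $u_i$ is the Steinberg character, of degree $u_i^{\alpha_i}$, which is trivial on the centre. This is exactly the source of the possible exception $q=u_i$.

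For primes $q\mid|G/C|$ not dividing $|K/Z|$ — the outer primes arising from diagonal, field, and factor-permuting automorphisms in $\mathrm{Out}(K)$, via the faithful embedding $G/C\hookrightarrow\mathrm{Aut}(K)$ — I would instead exhibit a character of $G/C$, inflated to $G$ and hence trivial on $C\supseteq Z$, of degree divisible by $q$, using the character theory of the relevant almost-simple extensions of $\prod_i\mathrm{PSL}_2(u_i^{\alpha_i})$; since these characters are trivial on $Z$ there is no compatibility obstruction, and multiplying by $\psi$ again yields adjacency. Combining the cases shows that the only pair that can fail is $(t,q)=(2,u)$, and I would pin down the exceptional configuration by showing that failure forces every $\psi\in\mathrm{Irr}(C)$ with $t\mid\psi(1)$ to be nontrivial on $Z$, which in turn forces $|\mathcal{K}|=1$, $K\cong\mathrm{SL}_2(u^\alpha)$ with $u\neq2$, and — analysing the order-$2$ subgroup $Z\le Z(C)$ against the $2$-part of $\mathrm{cd}(C)$ — $Z=P'$ for $P\in\mathrm{Syl}_2(C)$.

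Finally, for $\rho(G)=\rho(G/C)\cup\rho(C)$, the inclusion $\supseteq$ is immediate: inflation gives $\rho(G/C)\subseteq\rho(G)$, and $C\lhd G$ together with Clifford gives $\psi(1)\mid\chi(1)$ for $\chi\in\mathrm{Irr}(G\mid\psi)$, so $\rho(C)\subseteq\rho(G)$. For $\subseteq$, given $\chi\in\mathrm{Irr}(G)$ and a constituent $\psi$ of $\chi|_C$, Lemma~\ref{fraction} yields $\chi(1)/\psi(1)\mid[G:C]$, so every prime dividing $\chi(1)$ divides $\psi(1)$ or $|G/C|$; the analysis above shows that every prime dividing $|G/C|$ lies in $\rho(G/C)\cup\rho(C)$, which closes the equality. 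I expect the main obstacle to be the last step of the adjacency argument: isolating exactly when the central-character obstruction is unavoidable and translating it into the clean condition $Z(K)=P'$, which rests on the explicit faithful-character degrees of $\mathrm{SL}_2(u^\alpha)$ together with a Sylow-$2$ argument relating $Z$ to the $2$-part of the degrees of $C$; everything else is Clifford theory and the fact that $\rho=\pi$ for $\mathrm{PSL}_2$.
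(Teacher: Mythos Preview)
The paper does not prove this lemma: it is quoted from \cite{AC} with no accompanying argument, so there is no in-paper proof against which to compare your proposal. Your central-product reduction and the split into faithful versus non-faithful irreducible degrees of $\mathrm{SL}_2(u^\alpha)$ are the natural ingredients and are in the spirit of the original source.

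Two places in your sketch would not go through as written, however. For outer primes $q\mid|G/C|$ with $q\nmid|K/Z|$, you propose to inflate a character of $G/C$ of degree divisible by $q$ and then ``multiply by $\psi$''; but $\psi$ lives on $C$ while the inflated character lives on $G$, and the product of an irreducible character of $G$ with one of a proper normal subgroup is not a character of either group. What is actually needed is to choose $\theta\in\mathrm{Irr}(K)$ compatible with $\psi$ on $Z$, form $\psi\theta\in\mathrm{Irr}(KC)$, and then show that some $\chi\in\mathrm{Irr}(G\mid\psi\theta)$ has $q\mid\chi(1)$ --- which requires either controlling $I_G(\psi\theta)$ or invoking the explicit degree results for almost-simple extensions of $\mathrm{PSL}_2$ (cf.\ Lemma~\ref{lw}) at the level of $G/C$. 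Second, the passage from ``every $\psi\in\mathrm{Irr}(C)$ of even degree is nontrivial on $Z$'' to ``$Z(K)=P'$ for $P\in\mathrm{Syl}_2(C)$'' is a genuine $2$-group statement (roughly: a central involution of $P$ that lies in the kernel of no nonlinear irreducible character of $P$ must generate $P'$), and it needs an actual argument rather than an assertion. You have correctly identified both of these as the delicate points; the rest of the outline is sound.
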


There exists a classification of simple groups whose character graphs are $K_4$-free stated by Tong-viet.

\begin{lemma}\label{free}\cite{finite}
Let $S$ be a non-abelian simple group. Suppose that the character graph $\Delta(S)$ is $K_4$-free. Then one of the following cases holds:\\
\textbf{a)} $S\cong M_{11}\, or\, J_1$.\\
\textbf{b)} $S\cong A_n$ with $n\in\{5,6,8\}$.\\
\textbf{c)} $S\cong \rm{PSL}_2(q)$ with $q=p^f\geqslant 4$ and $|\pi (q\pm 1)|\leqslant 3$, where $p$ is prime.\\
\textbf{d) }$S\cong \rm{PSL}_3(q)$ with $q\in \{3,4,8\}$.\\
\textbf{e)}  $S\cong \rm{PSU}_3(q)$ with $q\in \{3,4,9\}$.\\
\textbf{f)}  $S\cong \rm{PSU}_4(2)$ or $^2B_2(q^2)$ with $q^2=2^3\, or\, 2^5$.
\end{lemma}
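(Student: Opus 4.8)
The plan is to prove the two implications separately, after first pinning down the possible values of $|\rho(G)|$. Recall from the introduction that, since $\mathrm{diam}(\Delta(G))=3$, the vertex set partitions as $\rho_1\cup\rho_2\cup\rho_3\cup\rho_4$ with $\rho_1\cup\rho_2$ and $\rho_3\cup\rho_4$ each inducing a complete subgraph. Because $\Delta(G)$ is $K_4$-free, each of these two cliques has at most three vertices, so $|\rho(G)|\le 6$; and since a graph of diameter $3$ contains a geodesic on four vertices, $|\rho(G)|\ge 4$. Hence the hypothesis $|\rho(G)|\neq 5$ forces $|\rho(G)|\in\{4,6\}$. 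For the easy implication, if $G\cong J_1\times A$ with $A$ abelian then $\mathrm{cd}(G)=\mathrm{cd}(J_1)=\{1,56,76,77,120,133,209\}$, whence $\rho(G)=\{2,3,5,7,11,19\}$ and $|\rho(G)|=6\neq 5$; a direct check shows this graph is indeed $K_4$-free of diameter $3$, consistent with the standing hypotheses.

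For the forward implication I would first establish that $G$ is non-solvable. If $G$ were solvable then, by the theorem of Casolo et al.\ quoted in the introduction, $G=PH$ with $P$ a normal non-abelian Sylow $p$-subgroup and $H$ a $p$-complement; I would show that the graph forced by this configuration cannot simultaneously be $K_4$-free, of diameter $3$, and supported on exactly $4$ or $6$ primes, a contradiction. Assuming $G$ non-solvable, Lemma \ref{part} guarantees that $\Delta(G)^c$ is bipartite, which together with the clique structure tightly constrains the global shape of $\Delta(G)$: for $|\rho(G)|=4$ the only connected diameter-$3$ graph is the path $P_4$, and for $|\rho(G)|=6$ both cliques have exactly three vertices, joined only through the middle parts $\rho_2$ and $\rho_3$, which is precisely the adjacency pattern of $\Delta(J_1)$.

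The core of the argument is to locate a non-abelian simple section $S$ of $G$ and invoke the classification in Lemma \ref{free}. Since $\rho(S)\subseteq\rho(G)$ and the character graph of a composition factor sits inside $\Delta(G)$, the graph $\Delta(S)$ is itself $K_4$-free, so $S$ lies in the list of Lemma \ref{free}; moreover $|\rho(S)|\le 6$. I would then eliminate every candidate except $J_1$. The alternating and sporadic cases ($A_5,A_6,A_8,M_{11}$), together with $\mathrm{PSL}_3(q)$, $\mathrm{PSU}_3(q)$, $\mathrm{PSU}_4(2)$ and the Suzuki groups, have character graphs whose diameter or clique pattern (for instance $\mathrm{diam}(\Delta(M_{11}))=2$) is incompatible with reproducing a diameter-$3$, $K_4$-free shape on $4$ or $6$ vertices, even after the solvable radical contributes further primes. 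The $\mathrm{PSL}_2(q)$ family is treated with Lemma \ref{trick}: its conclusion that every prime of $C_G(K)$ is adjacent to almost all primes of $|G/C_G(K)|$ produces a $K_4$ or collapses the diameter as soon as $\pi(q\pm1)$ is large, while the arithmetic of $\pi(q\pm1)$ is pinned down by the degree equation of Lemma \ref{equality}. This simultaneously disposes of the case $|\rho(G)|=4$ (no admissible $S$ yields $P_4$) and singles out $S\cong J_1$ when $|\rho(G)|=6$.

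Finally, once $J_1$ appears as a section, I would upgrade this to $G\cong J_1\times A$ with $A$ abelian. Since $\rho(J_1)=\{2,3,5,7,11,19\}$ already exhausts the six available primes, every prime of $G$ lies in $\rho(J_1)$, and Lemmas \ref{fraction} and \ref{good} force the relevant characters of the $J_1$-part to extend without creating new degree-divisibilities; combined with Lemma \ref{trick} applied to the centralizer $C_G(K)$ of the simple part, this shows that $C_G(K)$ is abelian and that the simple part splits off as a direct factor isomorphic to $J_1$, giving $G\cong J_1\times A$. The hard part will be the $|\rho(G)|=6$ elimination: maintaining global control of the diameter-$3$, bipartite-complement shape while running the local, group-by-group analysis of Lemma \ref{free}, and in particular ruling out that some extension or wreath-type section built on a $\mathrm{PSL}_2(q)$ could reproduce the exact adjacency pattern of $\Delta(J_1)$ without violating $K_4$-freeness.
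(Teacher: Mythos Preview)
Your proposal does not address the stated lemma at all. Lemma~\ref{free} is a classification of non-abelian simple groups $S$ whose character graph $\Delta(S)$ is $K_4$-free; it is quoted from \cite{finite} and the paper gives no proof of it. What you have written is instead a sketch of the paper's Main Theorem (the ``if and only if'' about $G\cong J_1\times A$ versus $|\rho(G)|\neq 5$). You even invoke Lemma~\ref{free} as an ingredient in your argument, which makes no sense if Lemma~\ref{free} is the target.

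If your intent was really the Main Theorem, be aware that your sketch has substantial gaps relative to what the paper actually does. The paper does not dispose of the solvable case via Casolo et al.; it appeals to Lewis's result \cite{[A]} that no solvable group has a diameter-$3$ character graph on five vertices, and more importantly it uses Lemma~\ref{path} (no $P_5$) together with Lemma~\ref{part} to force $|\rho(G)|=6$ and rule out $|\rho(G)|=4$ directly---your claim that $P_4$ is the only possibility on four vertices is correct, but eliminating it requires a separate argument you do not give. The paper also does not merely ``eliminate every candidate except $J_1$'' by inspecting $\Delta(S)$; for each family ($A_8$, $M_{11}$, the various $\mathrm{PSL}_3$, $\mathrm{PSU}_3$, Suzuki groups, and several $\mathrm{PSL}_2(q)$ subcases) it runs a detailed Clifford-theoretic analysis of $\mathrm{cd}(G|\theta)$ for characters $\theta$ of $R(G)$ whose degree involves primes outside $\pi(S)$, using the maximal subgroup structure of $S$ and Lemmas~\ref{otf}--\ref{sen}. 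Your single sentence about Lemma~\ref{trick} handling all of $\mathrm{PSL}_2(q)$ is far too optimistic: the paper devotes Lemmas~\ref{one7}--\ref{nine6} to this, splitting into cases according to the isomorphism type of $\Delta(S)$, and the arguments are quite delicate. Finally, the passage from ``$J_1$ is a section'' to ``$G\cong J_1\times A$'' is carried out in Lemma~\ref{gej} by a maximal-subgroup index analysis of $J_1$, not via Lemma~\ref{trick} (which does not even apply to $J_1$, since it concerns normal subgroups of $\mathrm{PSL}_2$ or $\mathrm{SL}_2$ type).
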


\begin{lemma}\label{her} \cite{[her]}
Let $G$ be a simple group. If $|\pi(G)|=3$, then $G$ is isomorphic to one of the groups $A_5,A_6,\rm{PSL}_2(7),\rm{PSL}_2(8),\rm{PSL}_2(17),\rm{PSL}_3(3),\rm{PSU}_3(3)$ and $\rm{PSU}_4(2)$.
\end{lemma}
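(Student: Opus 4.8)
The plan is to establish the classification via the classification of finite simple groups (CFSG), treating each family in turn and using Zsigmondy's theorem (\cite{zsi}) as the principal enumerative tool. Recall first that, by Burnside's $p^aq^b$ theorem, every non-abelian simple group has at least three distinct prime divisors in its order; thus the hypothesis $|\pi(G)|=3$ is the extremal case, and the problem is to show that only the eight listed groups are this economical.

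First I would dispose of the non-Lie families. For the alternating groups, for $n\geqslant 5$ one has $\pi(A_n)=\{\ell \text{ prime}: \ell\leqslant n\}$, since $|A_n|=n!/2$ and $A_n$ contains an element of order $\ell$ for every prime $\ell\leqslant n$ (using a double transposition when $\ell=2$). Hence $|\pi(A_n)|=3$ forces $5\leqslant n\leqslant 6$, giving $A_5$ and $A_6$, while for $n\geqslant 7$ the prime $7$ enters and $|\pi(A_n)|\geqslant 4$. For the $26$ sporadic groups I would read the orders from \cite{[At]}: already the smallest, $|M_{11}|=2^4\cdot 3^2\cdot 5\cdot 11$, has four prime divisors, and every sporadic order is divisible by at least four primes, so none survive.

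The bulk of the argument, and the main obstacle, is the groups of Lie type. For such a group over $GF(q)$ with $q=p^f$, the order factors as $q^N\prod_d \Phi_d(q)^{a_d}$: a power of $p$ times a product of cyclotomic values. By Zsigmondy's theorem each $\Phi_d(q)$ with $d\geqslant 3$ (and $\Phi_2(q)=q+1$ unless $q+1$ is a $2$-power) admits a primitive prime divisor, and these primitive primes are pairwise distinct and different from $p$. Hence as soon as the order involves three distinct cyclotomic factors carrying Zsigmondy primes we obtain $|\pi(G)|\geqslant 4$. This excludes all families of sufficiently large rank---in particular $\mathrm{PSL}_n(q)$ and the other classical families for large $n$, together with the exceptional types $G_2, F_4, E_6, E_7, E_8, {}^3D_4$ and $^2F_4$---and reduces the problem to the rank-one and rank-two families over small $q$, a finite check.

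For the surviving families I would argue case by case. For $\mathrm{PSL}_2(q)$ one has $\pi=\{p\}\cup\pi(q-1)\cup\pi(q+1)$; since $q-1$ and $q+1$ share only the factor $2$, imposing $|\pi|=3$ and invoking Lemma \ref{equality} to control the instances where $q\pm 1$ is a prime power pins down $q\in\{4,5,7,8,9,17\}$, yielding, up to the isomorphisms $\mathrm{PSL}_2(4)\cong\mathrm{PSL}_2(5)\cong A_5$ and $\mathrm{PSL}_2(9)\cong A_6$, the groups $\mathrm{PSL}_2(7), \mathrm{PSL}_2(8)$ and $\mathrm{PSL}_2(17)$. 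The Suzuki family $^2B_2(q)$, $q=2^{2m+1}\geqslant 8$, has order $q^2(q-1)(q^2+1)$, where $q^2+1$ splits into two nontrivial coprime odd factors; together with $q-1$ and the prime $2$ this forces $|\pi|\geqslant 4$, and a similar count excludes the Ree family $^2G_2(q)$ and $G_2(q)$. Finally, for $\mathrm{PSL}_3(q), \mathrm{PSU}_3(q), \mathrm{PSp}_4(q)$ and $\mathrm{PSU}_4(q)$ the cyclotomic factors force $q$ to be very small, and direct inspection of the remaining orders leaves exactly $\mathrm{PSL}_3(3)$, $\mathrm{PSU}_3(3)$ (with $\mathrm{PSL}_3(2)\cong\mathrm{PSL}_2(7)$ already accounted for) and $\mathrm{PSU}_4(2)\cong\mathrm{PSp}_4(3)$, of order $2^6\cdot 3^4\cdot 5$. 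The hardest part is organising this Lie-type sweep so that the Zsigmondy bound eliminates all the higher-rank and exceptional families uniformly, while the genuine small-group coincidences---above all $\mathrm{PSU}_4(2)$ and the various exceptional isomorphisms---are caught by hand.
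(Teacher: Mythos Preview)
The paper does not give its own proof of this lemma; it simply quotes the result from Herzog \cite{[her]}. Your CFSG-based sketch is correct and is the standard modern route: dispose of the alternating and sporadic groups by inspection of their orders, then for groups of Lie type write $|G|$ as a power of $p$ times cyclotomic values $\Phi_d(q)$ and use Zsigmondy primes to force the rank and field size down to a finite list, checked by hand. (One small caution: remember to track the Zsigmondy exception $(p,d)=(2,6)$ when sweeping the Lie-type families, though it causes no trouble here.)

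The approach in the cited source is genuinely different in spirit. Herzog's paper is from 1968, well before the classification was available, so it cannot and does not run through the CFSG families; instead it combines the structural results on simple groups known at the time (Brauer--type arguments on centralizers of involutions, Thompson's work, and the then-recent low-rank classifications) to pin down the list directly. Your route is far more uniform and requires much less ingenuity per case---that is exactly what CFSG buys you---while the original extracts the same conclusion from a vastly smaller toolkit.
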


The structure of the character graph of $PSL_2(q)$ is determined as follows:

\begin{lemma}\label{chpsl}\cite{[white]}
Let $G\cong \rm{PSL}_2(q)$, where $q\geqslant 4$ is a power of a prime $p$.\\
\textbf{a)}
 If $q$ is even, then $\Delta(G)$ has three connected components, $\{2\}$, $\pi(q-1)$ and $\pi(q+1)$, and each component is a complete graph.\\
\textbf{b)}
 If $q>5$ is odd, then $\Delta(G)$ has two connected components, $\{p\}$ and $\pi((q-1)(q+1))$.\\
\textbf{i)}
 The connected component $\pi((q-1)(q+1))$ is a complete graph if and only if $q-1$ or $q+1$ is a power of $2$.\\
\textbf{ii)}
  If neither of $q-1$ or $q+1$  is a power of $2$, then $\pi((q-1)(q+1))$ can be partitioned as $\{2\}\cup M \cup P$, where $M=\pi (q-1)-\{2\}$ and $P=\pi(q+1)-\{2\}$ are both non-empty sets. The subgraph of $\Delta(G)$ corresponding to each of the subsets $M$, $P$ is complete, all primes are adjacent to $2$, and no prime in $M$ is adjacent to any prime in $P$.
 \end{lemma}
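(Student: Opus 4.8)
The plan is to read off the entire structure of $\Delta(G)$ directly from the list of irreducible character degrees of $G\cong\mathrm{PSL}_2(q)$, so the first step is to record that list in both parities. Writing $q=p^f$, the complex irreducible character degrees are well known from the character table (obtained, e.g., by passing from $\mathrm{SL}_2(q)$ to the quotient by its centre): for $q$ even one has $\mathrm{cd}(G)=\{1,q-1,q,q+1\}$, while for odd $q\geqslant 7$ one has $\mathrm{cd}(G)=\{1,(q-1)/2,q-1,q,q+1\}$ when $q\equiv 1\pmod 4$ and $\mathrm{cd}(G)=\{1,(q+1)/2,q-1,q,q+1\}$ when $q\equiv 3\pmod 4$. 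I would then record the elementary number-theoretic facts that $\gcd(q,q^2-1)=1$, that $\gcd(q-1,q+1)=1$ for $q$ even and $=2$ for $q$ odd, and that the half-degree $(q-1)/2$ (resp. $(q+1)/2$) divides $q-1$ (resp. $q+1$). The point of the last observation is that the half-degrees contribute no new vertex and no new edge to $\Delta(G)$ beyond those forced by $q-1$ and $q+1$; hence the adjacency relation is completely governed by the three degrees $q$, $q-1$, $q+1$, and $\rho(G)=\{p\}\cup\pi(q-1)\cup\pi(q+1)$.

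For part (a) ($q$ even, so $p=2$) the three sets $\{2\}$, $\pi(q-1)$, $\pi(q+1)$ are pairwise disjoint because $q-1$ and $q+1$ are coprime odd numbers. Two distinct primes in $\pi(q-1)$ have product dividing $q-1$, so $\pi(q-1)$ is complete, and likewise $\pi(q+1)$ via $q+1$; meanwhile $2$ cannot be joined to any odd prime, since the only degree it divides is $q=2^f$, whose sole prime factor is $2$. Verifying that no prime of $\pi(q-1)$ meets a prime of $\pi(q+1)$ (no single degree is divisible by both, using $\gcd(q-1,q+1)=1$) then yields exactly three complete components.

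For part (b) ($q$ odd, $q>5$) I would first isolate $\{p\}$: the only degree divisible by $p$ is $q=p^f$, and $p\nmid q\pm 1$, so $p$ is adjacent to nothing and forms its own component, leaving $\pi(q^2-1)=\pi(q-1)\cup\pi(q+1)$ as the other component (with $2$ lying in both factors). For (i), if say $q-1=2^a$ then $\pi(q-1)=\{2\}\subseteq\pi(q+1)$, so the vertex set of this component is exactly $\pi(q+1)$, and since the single degree $q+1$ is divisible by every prime in $\pi(q+1)$, any two such primes are adjacent; thus the component is complete (the case $q+1=2^b$ is symmetric using the degree $q-1$). For (ii) and the converse of (i), assume neither of $q\pm 1$ is a $2$-power, set $M=\pi(q-1)\setminus\{2\}$ and $P=\pi(q+1)\setminus\{2\}$, both nonempty, and check: $M$ is complete via the degree $q-1$, $P$ via $q+1$; the prime $2$ is adjacent to every $r\in M$ (as $2r\mid q-1$) and every $s\in P$ (as $2s\mid q+1$); and no $r\in M$ is adjacent to any $s\in P$, because any common degree would have to be divisible both by an odd divisor of $q-1$ and by an odd divisor of $q+1$, which is impossible since $\gcd(q-1,q+1)=2$ forces each degree to miss one of $r,s$. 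This simultaneously proves (ii) and, since $M,P\neq\emptyset$ are mutually non-adjacent, the ``only if'' direction of (i).

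The main obstacle is the bookkeeping in the first step rather than the graph argument: one must pin down $\mathrm{cd}(\mathrm{PSL}_2(q))$ correctly, in particular confirm that both $q-1$ and $q+1$ genuinely occur as degrees for every odd $q\geqslant 7$, and identify which half-degree survives the passage to the central quotient according to $q\bmod 4$. Once the degree set and the relations $\gcd(q,q^2-1)=1$ and $\gcd(q-1,q+1)\in\{1,2\}$ are in hand, every adjacency and non-adjacency above is a one-line divisibility check, and the exclusion $q>5$ in (b) is exactly what avoids the exceptional isomorphism $\mathrm{PSL}_2(5)\cong\mathrm{PSL}_2(4)$, where the generic degree formulas degenerate.
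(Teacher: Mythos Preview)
The paper does not prove this lemma; it is quoted from White's paper \cite{[white]} as a known result, so there is no in-paper argument to compare against. Your direct approach---reading the graph off the explicit list of irreducible degrees of $\mathrm{PSL}_2(q)$ together with the gcd relations $\gcd(q,q^2-1)=1$ and $\gcd(q-1,q+1)\in\{1,2\}$---is exactly how one establishes the result from scratch, and your graph-theoretic deductions in (a), (b)(i), (b)(ii) are all correct.

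One small slip that does not affect the argument: you have the half-degrees assigned to the wrong congruence classes. For odd $q\geqslant 7$ the degree that survives in $\mathrm{PSL}_2(q)$ is $(q+1)/2$ when $q\equiv 1\pmod 4$ and $(q-1)/2$ when $q\equiv 3\pmod 4$ (check $q=7,11,13$). Since your only use of the half-degree is the observation that it divides $q-1$ or $q+1$ and hence contributes no new vertex or edge, the swap is harmless---but you should correct it, as you yourself flag ``pinning down $\mathrm{cd}(\mathrm{PSL}_2(q))$ correctly'' as the main bookkeeping obstacle.
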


\section{About $\rm{PSL}_2(q)$}
$\indent$ In order to the proof of Main Theorem, we will need facts about $\rm{PSL}_2(q)$, where $q$ is a prime power.
 We will make use of Dickson's list of the subgroups of $\rm{PSL}_2(q)$, which can be found as Hauptsatz II.8.27 of \cite{hup}. We also use the fact that the Schur multiplier of  $\rm{PSL}_2(q)$ is trivial unless $q=4$ or $q$ is odd, in which case it is of order $2$ if $q\neq 9$ and of order $6$ if $q=9$.
 In the sequel of this section, we let $t$ be a prime $f\geqslant 1$ be an integer, $q=t^f$ and $S\cong \rm{PSL}_2(q)$.

\begin{lemma}\label{cdp}\cite{[non]}
Let $S\cong \rm{PSL}_2(q)$, where for some prime $p$, $q=2^p$ and $G:=\rm{Aut}(S)$. Then $\rm{cd}(G)=\{1,q-1,q,(q-1)p,(q+1)p\}$.
\end{lemma}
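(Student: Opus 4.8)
The plan is to realise $G=\mathrm{Aut}(S)$ explicitly and then run Clifford theory over the normal subgroup $S$. Since $q=2^{p}$ is even we have $\mathrm{PSL}_2(q)=\mathrm{PGL}_2(q)=\mathrm{SL}_2(q)$, there are no diagonal or graph automorphisms, and $\mathrm{Out}(S)$ equals the Galois group of $GF(q)$ over $GF(2)$, which is cyclic of order $p$ generated by the Frobenius map $\sigma\colon x\mapsto x^{2}$. Hence $G=S\rtimes\langle\sigma\rangle$ with $[G:S]=p$ prime. Because $G/S$ is cyclic, every $\sigma$-invariant $\chi\in\mathrm{Irr}(S)$ extends to $G$ (Corollary~11.22 of \cite{[isa]}), and then by Gallagher's theorem every character in $\mathrm{Irr}(G\mid\chi)$ has degree $\chi(1)$; on the other hand, if $\chi\in\mathrm{Irr}(S)$ is \emph{not} $\sigma$-invariant, then $I_G(\chi)=S$ (the inertia index divides the prime $p$ and is $>1$), so $\chi^{G}$ is the unique member of $\mathrm{Irr}(G\mid\chi)$ and has degree $p\,\chi(1)$. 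Summing over the $\langle\sigma\rangle$-orbits on $\mathrm{Irr}(S)$ this gives
\[
\mathrm{cd}(G)=\{\chi(1):\chi\in\mathrm{Irr}(S),\ \chi^{\sigma}=\chi\}\ \cup\ \{p\,\chi(1):\chi\in\mathrm{Irr}(S),\ \chi^{\sigma}\neq\chi\},
\]
so the whole problem reduces to describing the action of $\sigma$ on $\mathrm{Irr}(S)$.

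Here I would use the character table of $\mathrm{PSL}_2(q)$ for even $q$: besides $1_S$ and the Steinberg character $\mathrm{St}$ of degree $q$, there are $(q-2)/2$ irreducible characters of degree $q+1$, parametrised by the pairs $\{\lambda,\lambda^{-1}\}$ of non-trivial linear characters of a split torus $C_{q-1}$, and $q/2$ irreducible characters of degree $q-1$, parametrised by the pairs $\{\mu,\mu^{-1}\}$ of non-trivial linear characters of a non-split torus $C_{q+1}$. The automorphism $\sigma$ fixes $1_S$ and $\mathrm{St}$, being the unique characters of their degrees; it permutes the degree-$(q+1)$ characters through a power of the map $\lambda\mapsto\lambda^{2}$ on $C_{q-1}$, and the degree-$(q-1)$ characters through a power of the map $\mu\mapsto\mu^{2}$ on $C_{q+1}$. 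As all these permutations generate a group of order $p$, a pair $\{\lambda,\lambda^{-1}\}$ is $\sigma$-fixed if and only if it is fixed by $\lambda\mapsto\lambda^{2}$, i.e. $o(\lambda)\mid 3$, and likewise $\{\mu,\mu^{-1}\}$ is $\sigma$-fixed iff $o(\mu)\mid 3$. Taking $p$ odd, we have $3\nmid 2^{p}-1=q-1$, so \emph{no} character of degree $q+1$ is $\sigma$-fixed; hence all $(q-2)/2=2^{p-1}-1$ of them lie in orbits of length $p$ (the length divides the prime $p$ and is $>1$), contributing the degree $(q+1)p$, and one checks $p\mid 2^{p-1}-1$ by Fermat's little theorem for consistency. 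On the other hand $3\mid 2^{p}+1=q+1$, so there is exactly one order-$3$ pair $\{\mu_0,\mu_0^{-1}\}$, producing exactly one $\sigma$-fixed character of degree $q-1$, while the remaining $q/2-1=2^{p-1}-1$ characters of degree $q-1$ fall into orbits of length $p$, contributing the degree $(q-1)p$.

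Collecting everything, the $\sigma$-fixed characters of $S$ have degrees $1$, $q-1$ and $q$, the length-$p$ orbits carry degrees $q+1$ and $q-1$, and substitution into the displayed formula gives $\mathrm{cd}(G)=\{1,\,q-1,\,q,\,(q-1)p,\,(q+1)p\}$. The step I expect to be the genuine obstacle is identifying $\sigma$ as an explicit power map on the character groups of the two tori and carrying out the attendant arithmetic — essentially computing $\gcd(2^{p-1}\mp1,\,2^{p}\mp1)\in\{1,3\}$ — since this is exactly what forces $q+1$ out of $\mathrm{cd}(G)$ while keeping $q-1$ in it. I would also flag that $p=2$ is a true exception: there $\mathrm{Aut}(\mathrm{PSL}_2(4))\cong S_5$ and $\mathrm{cd}(S_5)=\{1,4,5,6\}$, because for $q=4$ it is $q-1$ rather than $q+1$ that is divisible by $3$; so throughout one should take $p$ to be an odd prime.
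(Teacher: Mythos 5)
Your argument is correct (for odd $p$), but it is not the route the paper takes: the paper offers no proof at all for this statement, quoting it directly from Lewis--White \cite{[non]}, whereas you reprove it from scratch. Your reduction is sound: for even $q$ one has $\mathrm{Aut}(S)=S\rtimes\langle\sigma\rangle$ with $[\mathrm{Aut}(S):S]=p$ prime, so every $\sigma$-invariant character extends (Corollary 11.22 of \cite{[isa]}) and contributes only its own degree by Gallagher, while every non-invariant character has inertia group $S$ and contributes exactly $p\chi(1)$ by the Clifford correspondence; the problem then really is the Galois action on the principal and discrete series, and your fixed-point analysis (a pair $\{\lambda,\lambda^{-1}\}$ is fixed iff $o(\lambda)\mid 3$, together with $3\nmid 2^p-1$ and $3\mid 2^p+1$ for odd $p$, and Fermat's little theorem making the orbit counts consistent) is exactly the arithmetic that removes $q+1$ from $\mathrm{cd}(G)$ while retaining $q-1$, and it is insensitive to the convention ambiguity $\lambda\mapsto\lambda^{2}$ versus $\lambda\mapsto\lambda^{2^{p-1}}$ that you flag. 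What your approach buys is a self-contained verification depending only on the well-known character table of $\mathrm{SL}_2(2^f)$, at the cost of redoing work that \cite{[non]} already contains; the citation buys brevity. Your remark about $p=2$ is also apt: the lemma as literally stated would include $q=4$, where $\mathrm{Aut}(\mathrm{PSL}_2(4))\cong S_5$ has $\mathrm{cd}=\{1,4,5,6\}\neq\{1,3,4,6,10\}$, so the statement implicitly requires $p$ odd; this causes no harm in the paper, since the lemma is only invoked in Lemma \ref{one7} for $S\cong\mathrm{PSL}_2(2^p)$ with $p\geqslant 5$.
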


 \begin{lemma}\label{lw}\cite{[non]}
 Let $q\geqslant 5$, $f\geqslant 2$ and $q\neq9$. If $S\leqslant G\leqslant \rm{Aut}(S)$, then $G$ has irreducible characters of degrees $(q+1)[G:G\cap \rm{PGL}_2(q)]$ and $(q-1)[G:G\cap \rm{PGL}_2(q)]$.
 \end{lemma}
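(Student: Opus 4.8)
Write $S\cong\mathrm{PSL}_2(q)$ with $q=t^f$, $t$ prime. The structural fact I would build on is that $\mathrm{Aut}(S)=\mathrm{PGL}_2(q)\rtimes\langle\phi\rangle$, where $\phi\colon x\mapsto x^{t}$ is the Frobenius field automorphism, of order $f$; hence for $S\le G\le\mathrm{Aut}(S)$, putting $P:=G\cap\mathrm{PGL}_2(q)$, the quotient $G/P\cong G\,\mathrm{PGL}_2(q)/\mathrm{PGL}_2(q)$ embeds into $\langle\phi\rangle\cong C_f$, so $G/P$ is cyclic of order $e:=[G:P]\mid f$. The plan is, for each $d\in\{q+1,\,q-1\}$, to produce an irreducible character $\psi_0$ of $S$ of degree $d$ whose inertia group in $\mathrm{Aut}(S)$ is exactly $\mathrm{PGL}_2(q)$. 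Granting this, $\psi_0$ is $P$-invariant, so (as $P/S$ is cyclic) it extends to some $\tilde\psi_0\in\mathrm{Irr}(P)$; since $I_G(\psi_0)=P$, every $G$-conjugate of $\tilde\psi_0$ outside $P$ restricts to $S$ as a character other than $\psi_0$, so $I_G(\tilde\psi_0)=P$, and Clifford's theorem gives $\mathrm{Ind}_P^G\tilde\psi_0\in\mathrm{Irr}(G)$ of degree $e\,\psi_0(1)=d\,[G:G\cap\mathrm{PGL}_2(q)]$, as required.

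To obtain $\psi_0$ of degree $q+1$ I would use the principal series of $\mathrm{PGL}_2(q)$: each character $\mu$ of the split torus $C_{q-1}\le\mathrm{PGL}_2(q)$ with $\mu^2\ne 1$ gives $\chi_\mu\in\mathrm{Irr}(\mathrm{PGL}_2(q))$ of degree $q+1$, with $\chi_\mu=\chi_{\mu^{-1}}$ and $\chi_\mu^{\phi}=\chi_{\mu^{t}}$. Choosing $\mu$ of full order $q-1$, the inequality $\mu^2\ne\epsilon_0$ — where $\epsilon_0$ is the order-$2$ character of $C_{q-1}$ trivial on $C_{q-1}\cap S$ — holds throughout the relevant range, so $\chi_\mu$ is not fixed by twisting with the sign character of $\mathrm{PGL}_2(q)/S$ and hence $\psi_0:=\chi_\mu|_S\in\mathrm{Irr}(S)$ has degree $q+1$; moreover $\psi_0$ is $\mathrm{PGL}_2(q)$-invariant and $\mathrm{Stab}_{\langle\phi\rangle}(\psi_0)=\{\,\phi^j : t^j\equiv\pm 1\pmod{(q-1)/\gcd(2,q-1)}\,\}$. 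For $0<j<f$ the strict inequalities $0<t^j\pm 1<(q-1)/\gcd(2,q-1)$ hold, which forces this stabilizer to be trivial and hence $I_{\mathrm{Aut}(S)}(\psi_0)=\mathrm{PGL}_2(q)$; this is exactly the point at which $q\ge 5$, $f\ge 2$ and $q\ne 9$ are needed, the borderline equality $t^{f-1}+1=(q-1)/2$ occurring precisely for $q=9$. The degree $q-1$ is treated identically with the non-split torus $C_{q+1}$ replacing $C_{q-1}$: a faithful character $\nu$ of $C_{q+1}$ produces a cuspidal $\chi_\nu\in\mathrm{Irr}(\mathrm{PGL}_2(q))$ of degree $q-1$ restricting irreducibly to $S$, and $0<t^j\pm 1<(q+1)/\gcd(2,q-1)$ for $0<j<f$ (valid for all $q\ge 5$, $f\ge 2$) makes its $\langle\phi\rangle$-stabilizer trivial.

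The hard part is the stabilizer/inertia bookkeeping behind the second paragraph: I must check that $\chi_\mu$ and $\chi_\nu$ exist, that they restrict \emph{irreducibly} to $S$ — otherwise $I_{\mathrm{Aut}(S)}(\psi_0)$ would drop strictly below $\mathrm{PGL}_2(q)$ and the induced degree would change — and that no mixed automorphism $\phi^j\delta$, combining the field part with the diagonal part, stabilizes $\psi_0$. The same analysis shows why $q=9$ must be excluded for the claim about degree $q+1$: $\mathrm{PSL}_2(9)\cong A_6$ has a unique irreducible character of degree $q+1=10$, which is therefore $\mathrm{Aut}(A_6)$-invariant and so extends — instead of inducing — to every overgroup, so no character of degree $10\,[G:S]$ occurs when $G$ properly contains $A_6$ in the field direction. (For $q$ even the argument simplifies, since $\mathrm{PGL}_2(q)=S$; and for $q=2^p$ the two degrees produced above are $(q-1)p$ and $(q+1)p$, the non-obvious entries of $\mathrm{cd}(\mathrm{Aut}(S))$ in Lemma~\ref{cdp}, which gives a useful consistency check.)
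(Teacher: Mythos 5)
The paper gives no proof of this lemma at all---it is quoted verbatim from Lewis--White \cite{[non]}---and your proposal reconstructs what is essentially the standard argument behind that citation: choose a torus parameter in general position so that the corresponding degree-$(q\pm1)$ character of $S$ is $\mathrm{PGL}_2(q)$-invariant, restricts irreducibly, and has trivial stabilizer under the field automorphisms, then extend it to $P=G\cap\mathrm{PGL}_2(q)$ (cyclic quotient $P/S$) and apply the Clifford correspondence to get an irreducible character of degree $(q\pm1)[G:P]$; the steps you flag as needing care (irreducibility of the restriction via $\mu^2\neq\epsilon_0$, and exclusion of mixed automorphisms, which follows from $\mathrm{PGL}_2(q)$-invariance together with $\mathrm{Stab}_{\langle\phi\rangle}(\psi_0)=1$) are all handled correctly. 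Your boundary analysis is also accurate: the only failure of the inequalities $0<t^j\pm1<(q-1)/\gcd(2,q-1)$ in the stated range is $t^{f-1}+1=(q-1)/2$ at $q=9$, affecting only the degree $q+1$, which matches the hypotheses of the lemma exactly.
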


\begin{lemma} \label{interest}
 Assume that $q=2^f$, for some $\epsilon \in \{\pm 1\}$, $|\pi(q+\epsilon)|=1$ and $|\pi(q-\epsilon)|=2\; or\; 3$. Then one of the following cases holds:\\
 \textbf{a)} $f$ is a prime, $q-1$ is a Mersenne prime and $|\pi(q+1)|=2\; or\; 3$.\\
 \textbf{b)} $f=4$, $q-1=3.5$ and $q+1=17$.\\
 \textbf{c)} $f=8$, $q-1=3.5.17$ and $q+1=257$.
 \end{lemma}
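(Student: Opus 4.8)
The statement is a number-theoretic lemma about powers of $2$, so the plan is to split on the sign $\epsilon$ and apply Lemma~\ref{equality} to the relevant factor $q\pm 1$ whenever that factor is forced to be a prime power. First I would treat the case $\epsilon=1$, i.e.\ $|\pi(q-1)|=1$ and $|\pi(q+1)|\in\{2,3\}$. Here $q-1=2^f-1$ is a prime power $r^m$; since $q-1$ is odd and $2^f-1=r^m$, either $m=1$ and $q-1$ is a Mersenne prime, or $m\geq 2$ and we invoke Lemma~\ref{equality} (in the form $r^m-1=2^f$, equivalently $p^a+1$ type relations) — the only possibility with $2^f$ on one side and a proper prime power $r^m$ being $2^f+1$ off by one is the Catalan-type solution $2^3+1=3^2$. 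But $2^f-1=r^m$ with $m\ge 2$ forces, via Lemma~\ref{equality}(a) read the right way, no solution at all (the known exception $2^3+1=3^2$ has the $2$-power on the wrong side), so $q-1$ must be a Mersenne prime and hence $f$ is prime. That yields conclusion~(a).

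Next I would treat $\epsilon=-1$, i.e.\ $|\pi(q+1)|=1$ and $|\pi(q-1)|\in\{2,3\}$. Now $q+1=2^f+1=r^m$ is a prime power, and I apply Lemma~\ref{equality} with $p=2$: case~(a) gives $f=3$, $r=3$, $m=2$, so $q=8$, $q+1=9$, $q-1=7$ — but then $|\pi(q-1)|=1$, contradicting $|\pi(q-1)|\in\{2,3\}$, so this subcase is vacuous; case~(b) gives $m=1$, so $q+1$ is a Fermat prime and $f$ is a power of $2$; case~(c) is impossible since $r=2$ cannot equal the odd number $2^f+1$. So $q+1=2^f+1$ is a Fermat prime with $f=2^k$. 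Then I would enumerate: $f=1$ gives $q=2$, too small; $f=2$ gives $q=4$, $q-1=3$, $|\pi(q-1)|=1$, excluded; $f=4$ gives $q=16$, $q-1=15=3\cdot 5$, $q+1=17$, which is conclusion~(b); $f=8$ gives $q=256$, $q-1=255=3\cdot 5\cdot 17$, $q+1=257$, conclusion~(c); $f=16$ gives $q=65536$, $q+1=65537$ is the largest known Fermat prime, but then $q-1=65535=3\cdot 5\cdot 17\cdot 257$ has $|\pi(q-1)|=4>3$, excluded; and for $f=2^k$ with $k\geq 5$ the factorization $2^{2^k}-1=\prod_{j=0}^{k-1}(2^{2^j}+1)$ shows $|\pi(q-1)|\geq k\geq 5$ (the Fermat numbers $F_0,\dots,F_{k-1}$ are pairwise coprime), again exceeding $3$. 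Hence only $f\in\{4,8\}$ survive, giving (b) and (c).

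The main obstacle is the bookkeeping in the $\epsilon=-1$ branch: one must correctly read Lemma~\ref{equality} to see that a prime power $2^f+1$ is either a Fermat prime (the $m=1$ case) or the sporadic $9=2^3+1$, and then carefully use the telescoping identity $2^{2^k}-1=\prod_{j=0}^{k-1}(2^{2^j}+1)$ together with pairwise coprimality of distinct Fermat numbers to bound $|\pi(q-1)|$ from below by $k$. This pins down $k\le 3$, and checking $k=0,1,2,3$ by hand finishes it. The $\epsilon=1$ branch is comparatively easy once one observes that $2^f-1$ being a prime power forces it to be prime (an immediate consequence of Lemma~\ref{equality} applied to $r^m = 2^f+1$ after noting $2^f-1 = r^m$ has no solution with $m\ge 2$), whence $f$ is prime; the condition $|\pi(q+1)|\in\{2,3\}$ is then simply carried along into conclusion~(a) without further constraint.
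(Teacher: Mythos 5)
Your proof is correct and follows essentially the same route as the paper: apply Lemma \ref{equality} to whichever of $q\pm 1$ is a prime power, obtaining a Mersenne prime (hence $f$ prime) in one case and a Fermat prime with $f$ a $2$-power in the other, and then use the factorization of $2^{2^k}-1$ into pairwise coprime Fermat numbers to force $f\in\{4,8\}$ (the paper states this last step more tersely). The only slip is harmless relabeling: with the lemma's convention, $\epsilon=1$ means $|\pi(q+1)|=1$ and $\epsilon=-1$ means $|\pi(q-1)|=1$, i.e.\ the opposite of how you named your two cases, but since you treat both configurations the argument is unaffected.
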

 \begin{proof}
 If $\epsilon=-1$, then using lemma \ref{equality}, $f$ is a prime, $q-1$ is a Mersenne prime  and $|\pi(q+1)|=2\; or \; 3$. Now we assume that $\epsilon=1$. By Lemma \ref{equality}, $f$ is a 2-power and $2^f+1=q+1$ is a Fermat prime.
 Therefore as $|\pi (q-1)|=2\; or\;3$, $f=4\;or\;8$ and the proof is completed.
 \end{proof}

 \begin{lemma}\label{evenfive}\cite{commu}
 If $q$ is even and $|\pi(q\pm 1)|=2$, then either $f$ is a prime or $f=6$ or $9$.
 \end{lemma}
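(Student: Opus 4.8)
The plan is to reduce everything to counting prime divisors of the two coprime odd integers $q-1=2^f-1$ and $q+1=2^f+1$ (recall $q=2^f$ since $q$ is even), using the cyclotomic factorizations $2^f-1=\prod_{d\mid f}\Phi_d(2)$ and $2^f+1=\prod_{d\mid 2f,\,d\nmid f}\Phi_d(2)$ together with Zsigmondy's Theorem. The key fact I would invoke is that for every $d$ other than $d=1$ and $d=6$, the factor $\Phi_d(2)$ has a primitive prime divisor, i.e.\ a prime dividing $2^d-1$ but no $2^e-1$ with $e<d$; primes attached to distinct values of $d$ are automatically distinct, since the multiplicative order of $2$ modulo such a prime equals $d$.

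First I would determine the shape of $f$ from the single hypothesis $|\pi(q-1)|=2$. Each divisor $d\mid f$ with $d\notin\{1,6\}$ contributes a distinct prime of $2^f-1$, so $|\pi(q-1)|\ge \tau(f)-1-\delta$, where $\tau(f)$ is the number of divisors of $f$ and $\delta=1$ if $6\mid f$ and $\delta=0$ otherwise. If $6\mid f$, then $1,2,3,6$ are distinct divisors of $f$, forcing $\tau(f)\ge 4$; combined with $\tau(f)-2\le 2$ this gives $\tau(f)=4$ and hence $f=6$. If $6\nmid f$, the bound reads $\tau(f)\le 3$, so either $\tau(f)=2$ and $f$ is prime, or $\tau(f)=3$ and $f=p^2$ for some prime $p$ (the value $\tau(f)=1$, i.e.\ $f=1$, being impossible since then $q-1=1$ has no prime divisors).

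It then remains to eliminate the case $f=p^2$ unless $p=3$, and here I would bring in the second hypothesis $|\pi(q+1)|=2$. For $p=2$ one has $2^4+1=17$, so $|\pi(q+1)|=1\ne 2$ and $f=4$ is discarded. For odd $p$, the divisors of $2p^2$ that do not divide $p^2$ are exactly $2,2p,2p^2$, whence $2^{p^2}+1=\Phi_2(2)\Phi_{2p}(2)\Phi_{2p^2}(2)$ with $\Phi_2(2)=3$. When $p\ge 5$, neither $2p$ nor $2p^2$ equals $6$, so $\Phi_{2p}(2)$ and $\Phi_{2p^2}(2)$ each carry a primitive prime divisor; together with $3$ these yield three distinct primes dividing $q+1$, contradicting $|\pi(q+1)|=2$. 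Thus among perfect squares only $f=9$ survives, and combining the two steps gives the claim.

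I expect the only delicate point to be the careful bookkeeping of the lone Zsigmondy exception $d=6$: it is precisely this exception that produces both sporadic outputs, namely $f=6$ (when $6\mid f$ in the first step) and $f=9$ (when $2p=6$ in the second step). Everything else is a routine divisor count, so the argument stands or falls on tracking that single exception rather than silently folding it into the generic estimate.
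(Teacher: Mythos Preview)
Your argument is correct. The paper does not prove this lemma at all; it simply quotes the result from \cite{commu}, so there is nothing to compare against line by line. What you have supplied is a clean, self-contained proof via cyclotomic factorizations and Zsigmondy's Theorem: the divisor-counting bound $|\pi(2^f-1)|\ge \tau(f)-1-\delta$ (with $\delta=1$ iff $6\mid f$) forces $f$ prime, $f=6$, or $f=p^2$, and then the companion factorization $2^{p^2}+1=\Phi_2(2)\Phi_{2p}(2)\Phi_{2p^2}(2)$ together with Zsigmondy eliminates $p^2$ for $p\ge 5$ and $p=2$. Your explicit isolation of the single Zsigmondy exception $d=6$ as the source of both sporadic values $f=6$ and $f=9$ is exactly the right bookkeeping, and the two verifications $2^6\pm1=\{63,65\}$ and $2^9\pm1=\{511,513\}$ confirm these survive the hypotheses.
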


Suppose $G$ is a finite group , $q\geqslant 11$ and $G/R(G)=S$, $\theta\in \rm{Irr}(R(G))$, $I:=I_G(\theta)$ and $N:=I/R(G)$. Now we peresent some results on $\rm{cd}(G|\theta)$.

\begin{lemma}\cite{commu}\label{Frobenius}
Suppose $N$ is a Frobenius group whose kernel is an elementary abelian $p$-group. Then one of the following cases holds:\\
\textbf{a)} $\theta$ is extendible to $I$ and $\rm{cd}(G|\theta)=\{\theta(1)[G:I],\theta(1)b\}$, for some positive integer $b$ divisible by $(q^2-1)/(2,q-1)$.\\
\textbf{b)} $\theta$ is not extendible to $I$ and all character degrees in $\rm{cd}(G|\theta)$ are divisible by $p(q+1)\theta(1)$.
\end{lemma}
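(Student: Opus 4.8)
The plan is to run Clifford theory along the chain $R(G)\lhd I\lhd G$ and to translate everything into data about the Frobenius group $N=I/R(G)$ sitting inside $S=\mathrm{PSL}_2(q)$. Write $\overline K$ for the (elementary abelian $p$-)kernel and $\overline H$ for a complement of $N$, and let $K\lhd I$ be the preimage of $\overline K$, so that $K/R(G)=\overline K$ and $I/K\cong\overline H$. Since $\theta$ is $I$-invariant, the Clifford correspondence gives an induction bijection $\mathrm{Irr}(I|\theta)\to\mathrm{Irr}(G|\theta)$ that multiplies degrees by $[G:I]=[S:N]$; in set form $\mathrm{cd}(G|\theta)=[S:N]\cdot\mathrm{cd}(I|\theta)$. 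Thus the statement reduces to computing $\mathrm{cd}(I|\theta)$, and the dichotomy (a)/(b) is exactly whether or not $\theta$ extends to $I$.

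Before splitting into cases I would extract the geometric facts needed from Dickson's list. Because $\overline K$ is a nontrivial elementary abelian $p$-subgroup of $\mathrm{PSL}_2(q)$ and the Sylow subgroups for primes other than the defining characteristic are cyclic, $p$ is the characteristic and $\overline K$ is unipotent; as the Sylow $p$-subgroups of $S$ are $TI$ and elementary abelian of order $q$, the group $\overline K$ lies in a unique Sylow $p$-subgroup $U$, whence $|\overline K|\mid q$. Normalizing $\overline K$ then forces $N\leqslant N_S(U)=B$, the Borel subgroup, so that $(q+1)=[S:B]$ divides $[S:N]$. Finally a complement $\overline H$ is contained in a cyclic maximal torus, so $\overline H$ is cyclic and, $N$ being Frobenius with abelian kernel and cyclic complement, $\mathrm{cd}(N)=\{1,|\overline H|\}$.

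In case (a), where $\theta$ extends to $\theta_0\in\mathrm{Irr}(I)$, Gallagher's theorem gives $\mathrm{cd}(I|\theta)=\theta(1)\,\mathrm{cd}(N)=\{\theta(1),\theta(1)|\overline H|\}$, hence $\mathrm{cd}(G|\theta)=\{\theta(1)[S:N],\theta(1)[S:N]\,|\overline H|\}$ and $b=[S:N]\,|\overline H|$. I would then simply compute $b=\big(|S|/(|\overline K||\overline H|)\big)\cdot|\overline H|=|S|/|\overline K|=(q/|\overline K|)\cdot(q^2-1)/(2,q-1)$; since $|\overline K|\mid q$, this is an integral multiple of $(q^2-1)/(2,q-1)$, which is exactly the assertion of (a).

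Case (b) is where the real work lies. Assuming $\theta$ does not extend to $I$, the first step is to show it does not even extend to $K$. If it did, the extensions of $\theta$ to $K$ would form a $\widehat{\overline K}$-torsor on which $\overline H\cong I/K$ acts, and the obstruction to an $I$-invariant extension would be a class in $H^1(\overline H,\widehat{\overline K})$; this vanishes because $\gcd(|\overline H|,|\overline K|)=1$ in a Frobenius group, so an $I$-invariant extension exists and, as $I/K$ is cyclic, it would extend to $I$ --- contradicting our assumption. Hence $\theta$ fails to extend to $K$, and I would pass to the character triple $(K,R(G),\theta)$: the associated central extension of the $p$-group $\overline K$ carries a nonzero alternating commutator form into the cyclic centre, forcing every $\psi\in\mathrm{Irr}(K|\theta)$ to have $\psi(1)/\theta(1)$ a positive power of $p$. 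Consequently $p\mid\chi(1)/\theta(1)$ for every $\chi\in\mathrm{Irr}(I|\theta)$, and combining this with $(q+1)\mid[S:N]$ and $\gcd(q+1,p)=1$ yields $p(q+1)\theta(1)\mid\chi(1)$ for all $\chi\in\mathrm{Irr}(G|\theta)$. The main obstacle is precisely this last case: obtaining the clean $p$-divisibility requires both the coprime-cohomology argument that reduces non-extendibility from $I$ to $K$ and the symplectic/central-extension analysis of the $p$-group, after which the $(q+1)$ factor follows cheaply from the Borel containment.
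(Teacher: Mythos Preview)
The paper does not prove this lemma at all; it is quoted verbatim from \cite{commu} and used as a black box. Your argument is a correct self-contained proof, so in that sense you have done more than the paper does here.

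Two small comments on the write-up. First, your deduction that ``$p$ is the characteristic'' from the fact that non-defining Sylow subgroups of $\mathrm{PSL}_2(q)$ are cyclic is not literally forced: an elementary abelian group of rank~$1$ is cyclic and could live in any Sylow. What actually pins $p$ down is the paper's convention (visible in Lemmas~\ref{otf} and~\ref{sen}) that the Frobenius case means a subgroup of the Borel, with the cyclic-kernel Frobenius subgroups absorbed into the separate ``contained in a dihedral group'' case. Under that reading your containment $N\leqslant B$ and hence $(q+1)\mid[S:N]$ are exactly right. Second, in case~(b) the appeal to a ``nonzero alternating commutator form'' on a central extension is more than you need: once you have shown $\theta$ does not extend to $K$, Lemma~\ref{fraction} (Corollary~11.29 of \cite{[isa]}) already gives that $\psi(1)/\theta(1)$ divides $|\overline K|$, a $p$-power, and non-extendibility forces this ratio to exceed~$1$, so $p\mid\psi(1)/\theta(1)$ immediately. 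The cohomological step (vanishing of $H^1(\overline H,\widehat{\overline K})$ by coprimality, then extension over the cyclic quotient $I/K$) is the genuinely substantive part of~(b), and you have that right.
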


\begin{lemma}\label{general}\cite{commu}
Let $N\cong \rm{PGL}_2(p^m)$, where $2m$ is a positive divisor of $f$ and  $p^m\neq 3$. Then for some $m_0\in \rm{cd}(G|\theta)$, $m_0$ is divisible by $\theta(1)p^{f-m}(p^f+1)$.
\end{lemma}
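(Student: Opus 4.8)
The plan is to pass to the inertia group $I$, reduce the whole statement to an arithmetic fact about the index $[S:N]$ together with the (possibly projective) character degrees of $\rm{PGL}_2(p^m)$ and $\rm{PSL}_2(p^m)$, and then run the computation forced by the Schur‑multiplier factor. First I would invoke the Clifford correspondence: since $R(G)\leqslant I=I_G(\theta)$ and $\theta$ is $I$‑invariant, induction $\psi\mapsto\psi^G$ is a bijection $\rm{Irr}(I|\theta)\to\rm{Irr}(G|\theta)$ with $\psi^G(1)=[G:I]\psi(1)$, and $[G:I]=[S:N]=[\rm{PSL}_2(p^f):\rm{PGL}_2(p^m)]$. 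Thus it suffices to produce $\psi\in\rm{Irr}(I|\theta)$ with $\theta(1)p^{f-m}(p^f+1)\mid[S:N]\psi(1)$, and then set $m_0:=\psi^G(1)$.

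Next comes the arithmetic. Writing $d=\gcd(2,p-1)$, one has $[S:N]=\tfrac1d\,p^{f-m}\,\tfrac{p^{2f}-1}{p^{2m}-1}$; since $2m\mid f$ we get $p^{2m}-1\mid p^f-1$, hence $\tfrac{p^{2f}-1}{p^{2m}-1}=c_0(p^f+1)$ with $c_0=\tfrac{p^f-1}{p^{2m}-1}=1+p^{2m}+\cdots+p^{f-2m}$, a sum of $f/(2m)$ terms, so $[S:N]=\tfrac1d\,p^{f-m}c_0(p^f+1)$. If $p=2$, or if $p$ is odd and $c_0$ is even (equivalently $f/(2m)$ even, since each summand of $c_0$ is odd), then $p^{f-m}(p^f+1)\mid[S:N]$ and, by Lemma~\ref{fraction}, every $\psi\in\rm{Irr}(I|\theta)$ works because $\theta(1)\mid\psi(1)$. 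The only remaining case is $p$ odd with $c_0$ odd, where $[S:N]=\tfrac12 p^{f-m}c_0(p^f+1)$; there I need a $\psi\in\rm{Irr}(I|\theta)$ with $\psi(1)/\theta(1)$ even.

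To produce such a $\psi$, set $q_0:=p^m$ (odd, and $\geqslant5$ as $q_0\neq3$), and pick $J$ with $R(G)\leqslant J\lhd I$, $J/R(G)\cong\rm{PSL}_2(q_0)$, so $[I:J]=2$. If some $\xi\in\rm{Irr}(J|\theta)$ is not $I$‑invariant, then $\psi:=\xi^I\in\rm{Irr}(I|\theta)$ has $\psi(1)=2\xi(1)$, hence $\psi(1)/\theta(1)$ is even and we are done. Otherwise every $\xi\in\rm{Irr}(J|\theta)$ is $I$‑invariant, so each extends to $I$ ($I/J$ being cyclic of order $2$) and a short computation gives $\rm{cd}(I|\theta)=\rm{cd}(J|\theta)$; as $\rm{PSL}_2(q_0)$ is perfect, by the theory of character triples \cite{[isa]} the set $\rm{cd}(J|\theta)/\theta(1)$ is exactly the set of degrees of the $\beta$‑projective irreducible characters of $\rm{PSL}_2(q_0)$ for some $\beta$ in its Schur multiplier, which is cyclic of order $2$ unless $q_0=9$ (order $6$). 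For $\beta=1$ this set is $\rm{cd}(\rm{PSL}_2(q_0))\ni q_0-1$; for $\beta$ of order $2$ it is the set of faithful character degrees of $\rm{SL}_2(q_0)$, which contains the even number $q_0-1$ (a cuspidal character on which $-I$ acts by $-1$); and the cases $q_0=9$ with $\beta$ of order $3$ or $6$ are settled from the Atlas \cite{[At]}, the covers $3.A_6$ and $6.A_6$ carrying faithful characters of even degree. In every case $\rm{cd}(I|\theta)$ contains $\theta(1)$ times an even integer, which yields the required $\psi$.

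The hard part will be the bookkeeping in the arithmetic step — pinning down exactly when the factor $\tfrac12$ obstructs divisibility and so forces an even degree‑ratio — together with the verification, in that case, that $\rm{PSL}_2(p^m)$ and all of its (at most sixfold) central covers admit an irreducible, possibly projective, character of even degree; the only situation that is not immediate from the generic $\rm{SL}_2(q_0)$ picture is $p^m=9$, where the Schur multiplier is $\mathbb{Z}/6$ rather than $\mathbb{Z}/2$.
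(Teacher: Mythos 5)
The paper does not actually prove this lemma: it is imported verbatim from \cite{commu}, so there is no in-text argument to compare yours against. Your proof is correct and follows the route one would expect that reference to take. The Clifford correspondence reduces everything to the index
$[S:N]=\frac{1}{d}\,p^{f-m}c_0(p^f+1)$ with $c_0=(p^f-1)/(p^{2m}-1)$, and your parity analysis correctly isolates the only obstruction, namely the factor $\frac12$ when $p$ is odd and $f/(2m)$ is odd; compensating by an even value of $\psi(1)/\theta(1)$ is exactly what is needed. The two points a referee would press on are both handled: the character-triple reduction in Case B (legitimate because $\mathrm{PSL}_2(p^m)$ is perfect, so its representation group is unique and the degree sets of $\mathrm{Irr}(J|\theta)$ modulo $\theta(1)$ are indexed by the Schur multiplier), and the existence of an even degree for every central character of every cover --- the discrete-series degree $p^m-1$ works for the trivial and order-$2$ classes, and the exceptional multiplier $\mathbb{Z}_6$ at $p^m=9$ is settled by the degree-$6$ faithful characters of $3.A_6$ and $6.A_6$. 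Two minor remarks: the case split between A and B is unnecessary, since once you know some $\xi\in\mathrm{Irr}(J|\theta)$ has $\xi(1)/\theta(1)$ even, every $\psi\in\mathrm{Irr}(I|\xi)$ has degree $\xi(1)$ or $2\xi(1)$ and hence even ratio as well; and in the setting of this paper the lemma is only ever invoked with $p$ odd (for $p=2$ one has $\mathrm{PGL}_2=\mathrm{PSL}_2$ and the statement is degenerate), so your $p=2$ branch is harmless but idle.
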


\subsection{On irreducible $\rm{PSL}_2(q)$-modules over $GF(p)$}
$\indent$ Let $F\subseteq E$ be a field extension and let $D$ be an $F$-representation of a finite group $G$. Then $D$ maps $G$ into a group of nonsingular matrices over $F$ that, of course, are also non-singular over $E$. Therefore $D$ is viewed as an $E$-representation of $G$. As such it is denoted by $D^E$. If $D_1$ and $D_2$ are similar $F$-representations, then $D^E_1$ and $D^E_2$ are similar and it follows that if $D$ corresponds to the $F[G]$-module $V$, then there exists a uniquely defined (up to isomorphism) $E[G]$-module $V^E$ that corresponds to $D^E$. Note that $V^E\cong V \otimes _FE$. Now suppose $\chi$ is an $E$-character of  $G$. The subfield  of $E$ generated by $F$ and the character values $\chi(g)$ for $g\in G$, is denoted by $F(\chi)$. We refer to \cite{[isa]} for a thorough analysis of this and related topics.

Now we wish to determine the classification of irreducible $\rm{SL}_2(2^f)$-modules over $GF(2^f)$, where $f\geqslant 3$ is an integer. This can be done with Steinberg's tensor product Theorem (see Corollary 3.17 of \cite{rep}). First, consider the natural $2$-dimensional module $V_0$ of $\rm{SL}_2(2^f)$ over $GF(2^f)$. This has $f$ "Galois twists" $V_i$, $i=0, \dots, f-1$, which are obtained as follows: Let $\sigma$ denote a Galois automorphism of $GF(2^f)$ of order $f$ defined by $\sigma(a):=a^2$, $a\in GF(2^f)$. Set $\sigma_i:=\sigma^i$, $i=0, \dots, f-1$. Then $V_i$ is the module with the same underlying vector space as $V_0$, but an element $x=[a_{st}]_{2\times2}$ of $\rm{SL}_2(2^f)$ acts on $V_i$ as $\sigma_i(x):=[\sigma_i(a_{st})]_{2\times 2}$. For every non-empty subset $J$ of $\{0, \dots, f-1\}$ let $V_J$ denote the tensor product of the $V_j$'s over $GF(2^f)$, for $j$ in $J$. For $J$ the empty set, let $V_J$ denote the trivial module. Notice that for $J=\{j\}$, we have $V_J=V_j$. We thus obtain $2^f$ modules $V_J$, $J\subseteq \{0,\dots,f-1\}$. By Steinberg's tensor product Theorem, these are pairwise non-isomorphic irreducible $GF(2^f)[\rm{SL}_2(2^f)]$-modules, and every irreducible module of $\rm{SL}_2(2^f)$ over $GF(2^f)$ is isomorphic to some such $V_J$. Note that the Galois automorphism of $GF(2^f)$ acts
on the set $S:=\{V_J|J\subseteq \{0, \dots, f-1\}\}$ by twisting. With respect to this action, we denote the orbit of $J\subseteq \{0, \dots, f-1\}$ by $\bar{J}$.

\begin{lemma}\label{tensor}
Let $V$ be a non-trivial irreducible $\rm{SL}_2(2^f)$-module over $GF(2)$, where $f\geqslant 3$ is an integer. Then there exists $J\subseteq \{0, \dots, f-1\}$ such that $V^{GF(2^f)}\cong \bigoplus_{I\in \bar{J}} V_I$, where $V_I$ is defined as above.
\end{lemma}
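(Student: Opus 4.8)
The plan is to descend the irreducible $GF(2)[\mathrm{SL}_2(2^f)]$-module $V$ to its algebraic closure and use the known classification of irreducible modules over $GF(2^f)$ together with Galois descent. First I would pass to $W:=V^{GF(2^f)}=V\otimes_{GF(2)}GF(2^f)$. Since $GF(2^f)/GF(2)$ is a Galois extension with cyclic group $\langle\sigma\rangle$ of order $f$, the module $W$ decomposes as a direct sum of irreducible $GF(2^f)[\mathrm{SL}_2(2^f)]$-modules, and the Galois group $\langle\sigma\rangle$ permutes the isomorphism types of these summands transitively; moreover $V$ can be recovered as (a Galois-fixed form inside) $W$, so all summands occur with multiplicity one and $W\cong\bigoplus_{I\in\bar J}V_I$ for a single $\langle\sigma\rangle$-orbit $\bar J$ of some $J\subseteq\{0,\dots,f-1\}$. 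Here the crucial input is the paragraph preceding the lemma: by Steinberg's tensor product theorem every irreducible $GF(2^f)$-module of $\mathrm{SL}_2(2^f)$ is one of the $V_J$, $J\subseteq\{0,\dots,f-1\}$, and the Galois twist $\sigma$ sends $V_J$ to $V_{J+1}$ (indices mod $f$), so the orbits under twisting are exactly the $\bar J$.

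Concretely, the key steps are: (1) record that $W$ is semisimple as a $GF(2^f)[\mathrm{SL}_2(2^f)]$-module (e.g.\ because extension of scalars of a semisimple module along a separable field extension is semisimple, or directly since $V$ is absolutely semisimple here); (2) identify the $\langle\sigma\rangle$-action on $W=V\otimes_{GF(2)}GF(2^f)$ coming from the second tensor factor with the twisting action on the set $S=\{V_J\}$, so that the multiset of isomorphism types of summands of $W$ is $\langle\sigma\rangle$-stable; (3) invoke the general fact (a standard consequence of the Noether–Deuring theorem / Galois descent for modules over a Galois extension) that for an \emph{irreducible} $GF(2)$-module $V$ the summands of $W$ form a single Galois orbit, each with multiplicity one; (4) combine (2) and (3): the set of types is $\{V_I : I\in\bar J\}$ for some $J$, each appearing once, which is exactly the claimed $W\cong\bigoplus_{I\in\bar J}V_I$. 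The hypothesis $f\geq 3$ and non-triviality of $V$ guarantee $V_J$ is non-trivial and the Steinberg description applies; one should note $V^{GF(2^f)}\cong V^{GF(2)}\otimes_{GF(2)}GF(2^f)$ makes sense because $\mathrm{SL}_2(2^f)$ acts $GF(2)$-linearly on the natural module.

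The main obstacle is step (3): making precise and justifying that extending scalars from $GF(2)$ to the splitting field $GF(2^f)$ turns an irreducible module into a multiplicity-free sum of a single Galois orbit of absolutely irreducible modules. This is where one must use that $GF(2^f)$ is (more than) a splitting field for $\mathrm{SL}_2(2^f)$ in characteristic $2$ — so all the $V_J$ are absolutely irreducible — and that the extension is Galois with group $\langle\sigma\rangle$; the endomorphism ring of $V$ is a finite field $GF(2^d)$ with $d\mid f$, $W$ is the sum of the $d$ conjugates of a single $V_J$ under $\langle\sigma\rangle$, and $\bar J$ has size $d$. Once this descent statement is in hand, matching $\bar J$ with the twisting orbit is immediate from the construction of the $V_i$ via $\sigma_i=\sigma^i$, and the lemma follows.
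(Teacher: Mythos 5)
Your argument is correct and is essentially the route the paper itself takes: the paper likewise reduces to the standard fact that an irreducible $GF(2)[\mathrm{SL}_2(2^f)]$-module, after extending scalars to the splitting field $GF(2^f)$, becomes a multiplicity-free direct sum of a single Galois orbit of absolutely irreducible modules (it invokes Theorem 9.21 and Corollary 9.23 of Isaacs, the multiplicity one coming from triviality of Schur indices over finite fields), and then identifies that orbit with a twisting orbit $\bar{J}$ via the Steinberg classification from the preceding paragraph. Your step (3) is exactly this cited descent statement, so your outline matches the paper's proof in substance.
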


\begin{proof}
Let $F:=GF(2)$ and $E:=\bar{F}$ be the algebraic closure of $F$. Also let $\chi$ be an irreducible character afforded by an irreducible constituent $W$ of $V^E$. Then it is easy to see that $L:=F(\chi)$ is equal to $GF(2^f)$. Now suppose $Z$ is an irreducible $L[\rm{SL}_2(2^f)]$-module such that $W$ is a constituent of $Z^E$. By Corollary $9.23$ of \cite{[isa]}, $W$ is similar to $Z^E$. Hence using Theorem $9.21$ of \cite{[isa]} for the field extension $F\subseteq L \subseteq E$, we are done.
\end{proof}

 Let  $F\subseteq E$ be a field extension and $V$ be a $G$-module  over the field $F$. The centralizer of $v\in V$ in $G$ is defined as $C_G(v):=\{g\in G|\, gv=v\}$. Note that for every $v\otimes a\in V^E$, $C_G(v\otimes a)=C_G(v)$.

\begin{lemma}\label{center}
Suppose $V$ is a non-trivial irreducible $\rm{SL}_2(2^f)$-module over $GF(2)$, where $f\geqslant 3$ is a positive integer.  Then there exists $v\in V$ so that $C_G(v)= 1$.
\end{lemma}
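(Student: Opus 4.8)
The plan is to combine Lemma~\ref{tensor} with a fixed-point count over the elements of prime order of $G=\mathrm{SL}_2(2^f)$. Since $2^f\geq 8$, $G$ is simple, so the non-trivial module $V$ is automatically faithful. Using Lemma~\ref{tensor}, fix a non-empty $J\subseteq\{0,\dots,f-1\}$ with $V^{GF(2^f)}\cong\bigoplus_{I\in\bar{J}}V_I$, and set $k:=|J|$ and $e:=|\bar{J}|$, so that $e\mid f$, $\dim_{GF(2)}V=e\,2^{k}$ and $|V|=2^{e2^{k}}$. A vector $v$ with $C_G(v)\neq 1$ is fixed by some element of prime order, so it is enough to prove that $V$ is not the union of the subspaces $C_V(g)$ as $g$ runs over the elements of prime order of $G$.

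The next step is to compute $\dim_{GF(2)}C_V(g)$ for each prime order, using that taking fixed points commutes with field extension and that each $V_I$ is an explicit tensor product of Galois twists of the natural $2$-dimensional module. If $u$ is an involution, every twist $V_i$ restricts to $\langle u\rangle\cong C_2$ as the projective module $J_2$, hence $V_I|_{\langle u\rangle}$ is projective of dimension $2^{|I|}$ and so a sum of $2^{|I|-1}$ copies of $J_2$; summing over $\bar{J}$ gives $\dim_{GF(2)}C_V(u)=e\,2^{k-1}=\tfrac12\dim_{GF(2)}V$, that is, $|C_V(u)|=|V|^{1/2}$. If $s$ has odd prime order $r$ (so $r\mid 2^f-1$ or $r\mid 2^f+1$), then $s$ acts on $V_i$ with eigenvalues $\zeta^{\pm2^{i}}$ for a primitive $r$-th root of unity $\zeta$, hence on $V_I$ with eigenvalues $\zeta^{\sum_{i\in I}\varepsilon_i2^{i}}$, $\varepsilon\in\{\pm1\}^{|I|}$, so that
\[
\dim_{GF(2)}C_V(s)=\sum_{I\in\bar{J}}\#\bigl\{\varepsilon\in\{\pm1\}^{k}\ :\ \textstyle\sum_{i\in I}\varepsilon_i2^{i}\equiv 0\ (\mathrm{mod}\ r)\bigr\}\leq e\,2^{k-1};
\]
here one should also exploit that equality forces divisibility relations among the $2^{i}$ modulo $r$, so that for most $r$ this bound is far from attained.

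Then I would run the count, using the subgroup structure of $\mathrm{SL}_2(2^f)$ (Dickson's list): there are $2^{2f}-1$ involutions, the elements of odd order dividing $2^f-1$ are partitioned by the split tori, and those of odd order dividing $2^f+1$ by the nonsplit tori. The essential refinement is that the involutions fall into $2^f+1$ families indexed by their centre $\ell$ (a line of the natural module), the $2^f-1$ transvections with centre $\ell$ all lying in a common elementary abelian subgroup $U_\ell$ of order $2^f$; describing $V|_{U_\ell}$ as a $GF(2)[U_\ell]$-module shows that $\bigcup_{1\neq u\in U_\ell}C_V(u)$ is much smaller than the trivial bound $(2^f-1)|V|^{1/2}$, and this overlap is exactly what makes the inequality $\bigl|\bigcup_{g}C_V(g)\bigr|<|V|$ hold. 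One then concludes that some $v\in V$ has $C_G(v)=1$.

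The hard part is that these estimates are tight: the naive product (number of prime-order elements) times $|V|^{1/2}$ already exceeds $|V|$ whenever the involution term dominates, which happens precisely for the modules of smallest dimension, above all when $|J|=1$ (where $V$ is a $GF(2)$-form of the natural module). For those one cannot argue numerically at all and must instead analyse the $G$-set $V$ directly, through the explicit point stabilisers of the natural module; making this work — and pinning down the isomorphism type of $V|_{U_\ell}$ needed for the overlap estimate in the general case, as well as the precise role of the hypothesis $f\geq 3$ — is the principal obstacle.
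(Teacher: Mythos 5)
There is a genuine gap: your text is a programme rather than a proof, and the steps you leave open are precisely the decisive ones. The inequality $\bigl|\bigcup_{g}C_V(g)\bigr|<|V|$ is nowhere established: the refinement of the torus bound (``equality forces divisibility relations'') is not proved, the overlap estimate for $V|_{U_\ell}$ is only announced, and you concede that the naive count fails exactly when the involution term dominates. Worse, in the case you yourself flag as the principal obstacle --- $\bar{J}$ the orbit of a singleton, so that $V$ is the natural module $GF(2^f)^2$ viewed as a $2f$-dimensional module over $GF(2)$ --- no version of this strategy can succeed, because the desired conclusion is simply not available there: $\mathrm{SL}_2(2^f)$ is transitive on the $2^{2f}-1$ nonzero vectors, so every $0\neq v\in V$ has $C_G(v)$ of order $2^f$ (a Sylow $2$-subgroup), and $V$ is literally the union of the fixed spaces of the involutions. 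Thus ``analysing the $G$-set $V$ directly'' shows there is no regular vector at all in this module; a correct treatment must either exclude this module or settle for the weaker conclusion that $v$ can be chosen with $C_G(v)$ a $2$-group, which is in fact all that the application in Lemma~\ref{power} needs, since then $[S:C_S(\lambda_0)]$ is still divisible by $q^2-1$. (Your unease here is well founded in another sense: the paper's own proof settles $J=\{j_0\}$ by asserting that $e_1^{j_0}+e_2^{j_0}$ has trivial centralizer, whereas that centralizer is, up to the Galois twist $\sigma_{j_0}$, the order-$2^f$ group of matrices $\left(\begin{smallmatrix} a & a+1\\ a+1 & a\end{smallmatrix}\right)$; so the lemma needs repairing precisely where you got stuck.)

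By contrast, for $|J|\geqslant 2$ the paper does no counting at all: starting from Lemma~\ref{tensor} it exhibits an explicit vector, namely the sum of the four basis tensors $\bigotimes_{j\in J}e_1^j$, $\bigotimes_{j\in J}e_2^j$, $e_1^{j_0}\otimes\bigl(\bigotimes_{j\in J\setminus\{j_0\}}e_2^j\bigr)$ and $e_2^{j_0}\otimes\bigl(\bigotimes_{j\in J\setminus\{j_0\}}e_1^j\bigr)$, and checks directly that only the identity fixes it; this is short and insensitive to the numerics that defeat your union bound for small $f$ and small $|J|$. Your preliminary reductions are fine --- faithfulness, $\dim C_V(u)=\tfrac12\dim V$ for involutions via projectivity of $V_I|_{\langle u\rangle}$, and the eigenvalue description on tori are all correct --- but they are the easy part; to rescue your route you would still have to prove the strict torus inequality and the $U_\ell$-overlap bound, and even then the singleton case would remain out of reach for the reason above.
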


\begin{proof}
Using Lemma \ref{tensor}, there exists $J\subseteq \{0, \dots, f-1\}$ such that $V^{GF(2^f)}\cong \bigoplus_{I\in \bar{J}} V_I$. Since $V$ is non-trivial, $J$ is non-empty. For every $i\in \{0, \dots, f-1\}$, let $\{e_1^i, e_2^i\}$ be the standard basis for $V_i$. Then , the set $\{\bigotimes_{j\in J} e_{\epsilon_j}^j|\, \epsilon_j \in \{1,2\}\}$ is a basis for $V_J$. We fix $j_0\in J$. If $J=\{j_0\}$, then we choose $v:=e_1^{j_0}+e_{2}^{j_0}$ and it is easy to see that $C_{\rm{SL}_2(2^f)}(v)= 1$. Now let $J-\{j_0\}\neq \emptyset$. Without loss of generality, we can assume that for every $j\in J$, $j_0\leqslant j$. Set \\ $v:=\bigotimes_{j\in J} e_{1}^j+\bigotimes_{j\in J} e_{2}^j + e_1^{j_0}\otimes(\bigotimes_{j\in J-\{j_0\}} e_{2}^j)+ e_2^{j_0}\otimes(\bigotimes_{j\in J-\{j_0\}} e_{1}^j)$. \\ Then we can see that $C_{\rm{SL}_2(2^f)}(v)= 1$ and it completes the proof.
\end{proof}

Let $H$ and $V$ be finite groups, and assume that $H$ acts by automorphisms on $V$. Given a prime number $q$, the pair $(H,V)$ is said a pair  satisfying $\mathcal{N}_q$ if $q$  divides $[H:C_H(V)]$ and, for every non-trivial $v\in V$, there exists a Sylow $q$-subgroup $Q$ of $H$ such that $Q\unlhd C_H(v)$.

\begin{lemma}\label{irreducible}\cite{AC}
Assume that the pair $(H,V)$ satisfies $ \mathcal{N}_q$. Then $V$ is an elementary abelian $r$-group for a suitable prime $r$, and it is an irreducible $H$-module over $GF(r)$.
\end{lemma}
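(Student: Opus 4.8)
The plan is to distil a \emph{partition} of $V$ out of the hypothesis $\mathcal{N}_q$ and then to read off the structure of $V$ and of the action from it. First I would reduce to a faithful action: put $C:=C_H(V)\lhd H$. Since $C\lhd H$, a Sylow $q$-subgroup $Q$ of $H$ meets $C$ in a Sylow $q$-subgroup of $C$, so $QC/C$ is a Sylow $q$-subgroup of $\overline H:=H/C$; and as $C$ acts trivially on $V$ we have $C_{\overline H}(v)=C_H(v)C/C$, so a normality $Q\unlhd C_H(v)$ descends to $QC/C\unlhd C_{\overline H}(v)$. Since moreover $C_{\overline H}(V)=1$ and $q\mid[H:C_H(V)]=|\overline H|$, the pair $(\overline H,V)$ again satisfies $\mathcal{N}_q$, and we may assume from now on that $C_H(V)=1$ and $q\mid|H|$.

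Next, the partition. Fix $1\neq v\in V$ and let $Q_v\in\mathrm{Syl}_q(H)$ be as in $\mathcal{N}_q$; then $Q_v\leqslant C_H(v)$, so $v\in C_V(Q_v)$, and being a normal Sylow $q$-subgroup of $C_H(v)$ it is its \emph{unique} Sylow $q$-subgroup. Hence for distinct $Q_1,Q_2\in\mathrm{Syl}_q(H)$ we get $C_V(Q_1)\cap C_V(Q_2)=1$, since a common nontrivial $w$ would place two distinct Sylow $q$-subgroups of $H$ inside $C_H(w)$. Together with the covering $V=\bigcup_{Q\in\mathrm{Syl}_q(H)}C_V(Q)$ (immediate from $\mathcal{N}_q$), this shows that $\Pi:=\{\,C_V(Q):Q\in\mathrm{Syl}_q(H)\,\}$ is a partition of $V$. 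As $H$ permutes $\mathrm{Syl}_q(H)$ transitively and $C_V(Q)^h=C_V(Q^h)$, the members of $\Pi$ form a single $H$-orbit, so all have the same order $m$; and none equals $V$, since $C_V(Q)=V$ would give $Q\leqslant C_H(V)=1$. Thus $\Pi$ is a nontrivial partition of $V$ whose components are transitively permuted by $H$; note also that $C_H(w)\leqslant N_H(Q)$ whenever $1\neq w\in C_V(Q)$.

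Now I would identify $V$. By the classification of finite groups admitting a nontrivial partition (due to Baer, Suzuki and Kegel), $V$ is a $p$-group, a Frobenius group, $S_4$, or one of $\mathrm{PSL}_2(p^a)$, $\mathrm{PGL}_2(p^a)$, $\mathrm{Sz}(2^{2a+1})$; in every non-$p$-group case one checks that no partition of the group has all its components of one order (there is always a distinguished component, or components of two different orders: a Frobenius kernel versus its complements, the Sylow $p$-subgroups versus the ``torus''-type cyclic subgroups, and so on). By the previous paragraph $V$ must therefore be a $p$-group, and we take $r:=p$. It then remains to show $\Phi(V)=1$, i.e.\ that $V$ is elementary abelian. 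For this I would analyse how the characteristic --- hence $H$-invariant --- subgroups $Z(V)$, $V'$, $\Phi(V)$ meet $\Pi$: each such subgroup $W\neq1$ must, being $H$-invariant, meet \emph{every} component of $\Pi$ in a subgroup of one fixed order, so that $|W|-1$ is a fixed multiple of $|\Pi|$; since a cyclic $p$-group admits only the trivial partition this already forces $Z(V)$ to be non-cyclic, and pushing this analysis --- using that a $p$-group partitioned by a single conjugacy class of subgroups is severely constrained --- should yield $\Phi(V)=1$. This elementary-abelian step is the crux of the whole argument.

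Finally, irreducibility is clean. Suppose $0<W<V$ were an $H$-submodule of $V\cong GF(p)^k$, and write $p^a=m$. Intersecting $\Pi$ with $W$, the subgroups $W\cap C_V(Q)$ form a single $H$-orbit, hence all have one dimension $b$; we cannot have $b=0$ (then $W=0$) nor $b=a$ (then $W\supseteq\bigcup\Pi=V$), so $0<b<a$. Counting nonzero vectors gives $|W|-1=|\Pi|\,(p^b-1)$ and $|V|-1=|\Pi|\,(p^a-1)$, hence $(p^w-1)(p^a-1)=(p^k-1)(p^b-1)$ with $w:=\dim W$; since $p^b-1\mid p^w-1$ and $p^a-1\mid p^k-1$, writing the equal integers $(p^w-1)/(p^b-1)$ and $(p^k-1)/(p^a-1)$ in base $p$ --- where they become sums of the distinct powers $p^0,p^b,p^{2b},\dots$ and $p^0,p^a,p^{2a},\dots$ respectively --- and comparing digits forces $b=a$ (the only alternative being $|\Pi|=1$, excluded), a contradiction. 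Hence $V$ has no proper nonzero $H$-submodule. In summary: the reductions and the extraction of $\Pi$ are routine, the partition classification disposes of the non-$p$-group cases at once, and irreducibility follows from the counting just given; the one genuinely delicate point --- the main obstacle --- is proving $V$ elementary abelian. The model to keep in mind throughout is the pair $(\mathrm{SL}_2(p),\,GF(p)^2)$, with its partition of $GF(p)^2$ into the $p+1$ lines transitively permuted by $\mathrm{SL}_2(p)$, which every pair satisfying $\mathcal{N}_q$ must ultimately resemble.
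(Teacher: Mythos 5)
You should first note that the paper contains no proof of this lemma at all: it is imported verbatim from \cite{AC}, so there is no in-paper argument to compare yours with, and the proposal has to stand on its own. On its own terms, much of it is sound: the reduction to a faithful action is correct; the observation that the Sylow $q$-subgroup supplied by $\mathcal{N}_q$ is the \emph{unique} Sylow $q$-subgroup of $C_H(v)$ does give pairwise trivial intersections, so $\Pi=\{C_V(Q):Q\in\mathrm{Syl}_q(H)\}$ is a nontrivial partition of $V$ whose components are transitively permuted by $H$; and, granted that $V$ is elementary abelian, your counting argument $(|W|-1=|\Pi|(p^b-1)$, $|V|-1=|\Pi|(p^a-1))$ together with the base-$p$ comparison does prove irreducibility.

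The genuine gap is exactly the step you yourself flag as the crux: that $V$ is elementary abelian. Everything you have extracted from $\mathcal{N}_q$ at that point is that $V$ carries a nontrivial partition with all components of one order. By Isaacs' theorem on equally partitioned groups (which is the clean reference here, rather than ``one checks'' the Baer--Suzuki--Kegel list case by case --- a list from which you also omit the groups of Hughes--Thompson type), this yields only that $V$ is a $p$-group of exponent $p$, and that is strictly weaker than the conclusion: the extraspecial group of order $p^3$ and exponent $p$ is equally partitioned by its subgroups of order $p$, so no analysis of how $Z(V)$, $V'$, $\Phi(V)$ meet an equal-order partition can, by itself, force $\Phi(V)=1$. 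To get abelianness you must actually use the additional structure of $\mathcal{N}_q$ --- the transitivity of $H$ on the components and the normality condition $Q\unlhd C_H(v)$ (note that in the extraspecial example the centre is a \emph{characteristic} component, so no automorphism group is transitive on the components) --- and your sketch never does this; ``pushing this analysis \dots should yield $\Phi(V)=1$'' is a hope, not an argument. So the proposal is an incomplete outline: its routine parts are fine, its irreducibility endgame is fine, but the central claim of the lemma is left unproved and the tools assembled so far are provably insufficient to deliver it.
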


We end this subsection with a useful result.

\begin{lemma}\label{module}\cite{AC}
Let $H\cong \rm{PSL}_2(r)$, where $r\geqslant 5$ is a prime power. Then for any odd prime $q$, there does not exist any $H$-module $M$ such that $(H,M)$ satisfies $\mathcal{N}_q$.
\end{lemma}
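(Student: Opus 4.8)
The plan is to argue by contradiction: assuming some $H$-module $M$ makes the pair $(H,M)$ satisfy $\mathcal{N}_q$, I would extract enough rigidity from the definition to clash with the subgroup structure of $H\cong\rm{PSL}_2(r)$ supplied by Dickson's list. First I would normalise the situation. By Lemma~\ref{irreducible}, $M$ is an elementary abelian $s$-group for some prime $s$ and an irreducible $GF(s)[H]$-module. Since $q\mid[H:C_H(M)]$ the kernel $C_H(M)\lhd H$ is proper, and as $H$ is simple $C_H(M)=1$, so $M$ is a faithful nontrivial irreducible module with $C_M(H)=0$. Next I would read off the two structural consequences of $\mathcal{N}_q$. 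For $0\neq v\in M$ the group $C_H(v)$ has a \emph{normal} Sylow $q$-subgroup $Q_v$, hence a \emph{unique} one; but $Q_v$ is a Sylow $q$-subgroup of all of $H$, so $C_H(v)$ contains a full Sylow $q$-subgroup of $H$ and $C_H(v)\leqslant N_H(Q_v)$. If a nonzero vector were fixed by two distinct Sylow $q$-subgroups $Q,Q'$ of $H$, both would be Sylow in $C_H(v)$, contradicting uniqueness; hence the nonzero fixed spaces $C_M(Q)\setminus\{0\}$, as $Q$ ranges over $\rm{Syl}_q(H)$, are pairwise disjoint and, by $\mathcal{N}_q$, cover $M\setminus\{0\}$. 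These fixed spaces are $H$-conjugate, so they share a common dimension $c:=\dim_{GF(s)}C_M(Q)$, and the covering forces each $C_M(Q)\neq 0$, yielding the counting identity
\[
s^{\dim M}-1=[H:N_H(Q)]\,\bigl(s^{\,c}-1\bigr).
\]

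With this identity in hand I would split into the three possibilities for an odd Sylow $q$-subgroup $Q$ of $\rm{PSL}_2(r)$ ($r=p^f$): \textbf{(a)} $q=p$, where $Q$ is elementary abelian of order $r$, $N_H(Q)$ is a Borel subgroup and $[H:N_H(Q)]=r+1$; \textbf{(b)} $q\mid r-1$, where $Q$ is cyclic inside a split torus, $N_H(Q)$ is the dihedral torus normaliser and $[H:N_H(Q)]=r(r+1)/2$; and \textbf{(c)} $q\mid r+1$, where $Q$ is cyclic inside a non-split torus and $[H:N_H(Q)]=r(r-1)/2$ (all indices read off from $|\rm{PSL}_2(r)|$ and Dickson's list). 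The easy half is when $M$ lies in the defining characteristic $s=p$. In cases (b) and (c) the order of $N_H(Q)$ is prime to $p$, so no nontrivial $p$-element lies in any $N_H(Q)$; since every $g$ with $C_M(g)\neq 0$ lies in some $N_H(Q)$ by the displayed consequence of $\mathcal{N}_q$, a nontrivial unipotent element (of order $p$) would act fixed-point-freely on $M$ — impossible for a nontrivial $p$-element on a nonzero $GF(p)$-space. In case (a) I would invoke Steinberg's tensor product theorem (as in Lemma~\ref{tensor}, now with Frobenius $a\mapsto a^{p}$): writing $M\otimes\overline{GF(p)}$ as a Galois orbit of $\bigotimes_i L(a_i)^{(\sigma^i)}$, each constituent has a one-dimensional Sylow-$p$ fixed space, so $c$ equals the orbit size $k$ and $\dim M=mk$ with $m=\prod_i(a_i+1)$. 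Substituting into the identity gives $1+p^{k}+\cdots+p^{(m-1)k}=p^{f}+1$, which forces $m=2$ and $k=f$, i.e.\ $M$ is a twist of the natural module. But then $\sum_i a_i=1$ is odd, so the central involution $-I$ acts as $-1$ and $M$ does not factor through $\rm{PSL}_2(r)$; here the oddness of $q=p$ is exactly what kills the only numerically admissible module.

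The hard part, which I expect to be the main obstacle, is the cross-characteristic case $s\neq p$. Here I would use the classification of irreducible $GF(s)[\rm{PSL}_2(r)]$-modules, whose absolute dimensions lie in $\{1,(r\pm1)/2,r-1,r,r+1\}$ and whose $GF(s)$-dimension is this times a small Galois-orbit factor, together with the value $c=\dim C_M(Q)$ computed by restricting the relevant Brauer character to the (cyclic or elementary abelian) $q$-group $Q$. The task is to show that the counting identity $\tfrac{s^{\dim M}-1}{s^{\,c}-1}=[H:N_H(Q)]$ has no solution: in case (a) the value $r+1$ is so small that $C_M(Q)$ would have codimension only about $\log_s r$ in $M$, incompatible with the action of $Q$ (of order $r$) on a module of dimension at least $(r-1)/2$; in cases (b),(c) the value $r(r+1)/2$ or $r(r-1)/2$ respectively forces $\dim M-c\approx 2\log_s r$, which cannot be matched against the short list of admissible dimensions. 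I would make both precise by extracting a primitive prime divisor of $s^{\dim M}-1$ via Zsigmondy's theorem and showing that it cannot divide $[H:N_H(Q)]$, thereby reducing everything to a finite check. The genuinely delicate sub-case is $s=q$, where the module and the acting $q$-group share a prime and the fixed-space dimension must be computed with extra care; this characteristic-by-characteristic verification is where the real work lies.
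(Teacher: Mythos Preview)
The paper does not actually prove this lemma: it is quoted from \cite{AC} with no accompanying argument, so there is no in-paper proof to compare your attempt against. What follows is an evaluation of your sketch on its own terms.

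Your derivation of the counting identity $s^{\dim M}-1=[H:N_H(Q)]\bigl(s^{\,c}-1\bigr)$ from the partition of $M\setminus\{0\}$ into Sylow fixed spaces is correct and is the natural reduction for problems of $\mathcal{N}_q$ type. The defining-characteristic analysis $s=p$ is sound when $p$ is odd, and your Steinberg tensor-product computation in case~(a) is right (including the parity obstruction from $-I$). However, your treatment of cases~(b) and~(c) contains a real slip when $p=2$: for $r=2^{f}$ the normaliser $N_H(Q)$ is dihedral of order $2(r\pm1)$, which is \emph{even}, so nontrivial unipotents (here involutions) can and do lie in some $N_H(Q)$---indeed every involution of $\mathrm{SL}_2(2^f)$ is a reflection in some such dihedral group---and your contradiction evaporates. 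This gap is easily closed within the paper's own toolkit: Lemma~\ref{center} supplies, for every nontrivial irreducible $GF(2)[\mathrm{SL}_2(2^{f})]$-module, a vector with trivial stabiliser, which immediately violates $\mathcal{N}_q$. But as written the argument is incomplete.

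The larger issue is that the cross-characteristic case $s\neq p$ is only a plan. The route you propose---enumerate the irreducible $GF(s)[\mathrm{PSL}_2(r)]$-modules, compute $c=\dim C_M(Q)$ from Brauer characters, and exclude the counting identity via a Zsigmondy primitive prime divisor---is plausible in outline, but the verification (the Weil modules of dimension $(r\pm1)/2$, the Galois-descent factor on $\dim M$, the delicate case $s=q$, and the Zsigmondy exceptions) is genuine work that you have not carried out, and you yourself flag it as ``where the real work lies''. As it stands the proposal is a credible outline rather than a proof; since the paper simply imports the result from \cite{AC}, the intended expectation here is to cite it, not to reprove it.
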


\subsection{Almost simple groups with socle $\rm{PSL}_2(q)$}
$\indent$ In this subsection we wish to state some results on almost simple groups with socle $\rm{PSL}_2(q)$ where $q$ is a prime power.
\begin{lemma}\label{otf}
Suppose $G$ is a finite group such that $S:=G/R(G)\cong \rm{PSL}_2(q)$, where for some integer $f\geqslant 4$, $q:=2^f$. If $\theta\in \rm{Irr}(R(G))$, $I:=I_G(\theta)$ and $N:=I/R(G)$, then one of the following cases occurs:\\
\textbf{a)} $N=S$ and $\rm{cd}(G|\theta)=\{m\theta(1)|\,m\in \rm{cd}(S)\}$.\\
\textbf{b)} $N$ is an elementary abelian $2$-group and every $m\in\rm{cd}(G|\theta)$ is divisible by $(q^2-1)\theta(1)$. \\
\textbf{c)} $N$ is contained in a dihedral group and every $m\in\rm{cd}(G|\theta)$ is divisible by $2(q-1)\theta(1)$ or $2(q+1)\theta(1)$.\\
\textbf{d)} $N$ is a Frobenius group whose kernel is an elementary abelian $2$-group and either all character degrees in $\rm{cd}(G|\theta)$ are divisible by $2(q+1)\theta(1)$ or some $m\in\rm{cd}(G|\theta)$ is divisible by $(q^2-1)\theta(1)$.\\
\textbf{e)} $N\cong A_5$ and $\Delta(G)[(\pi(S)\cup \pi(\theta(1)))-\{5\}]$ is a complete graph.\\
\textbf{f)} $N\cong \rm{PSL}_2(2^m)$, where $m\neq f$ is a positive divisor of $f$ and $2^m\neq 4$. In this case, $2^{f-m}\theta(1)(2^m\pm 1)(2^{2f}-1)/(2^{2m}-1) \in \rm{cd}(G|\theta)$.
\end{lemma}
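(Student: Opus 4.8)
The plan is to analyze the section $N = I_G(\theta)/R(G)$, which is a subgroup of $S \cong \mathrm{PSL}_2(q)$ with $q = 2^f$, by running through Dickson's list (Hauptsatz II.8.27 of \cite{hup}) of subgroups of $\mathrm{PSL}_2(q)$. Since $\theta$ extends to $I$ when $I = G$ (i.e.\ $N = S$), one first disposes of case (a) by Clifford correspondence together with Gallagher's theorem: if $N = S$ then $\theta$ is $G$-invariant, and because the Schur multiplier of $\mathrm{PSL}_2(2^f)$ is trivial for $f \geq 3$, $\theta$ extends to $G$, whence $\mathrm{cd}(G|\theta) = \{m\theta(1)\mid m\in\mathrm{cd}(S)\}$ by Gallagher. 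For the remaining cases $N < S$, the subgroup structure from Dickson's list gives the possibilities: $N$ lies in a Borel subgroup (so $N$ is contained in a group $E \rtimes C$ with $E$ elementary abelian $2$-group and $C$ cyclic of order dividing $q-1$), or $N$ is contained in a dihedral group of order $2(q\pm 1)$, or $N \cong A_4, S_4, A_5$, or $N \cong \mathrm{PSL}_2(2^m)$ or $\mathrm{PGL}_2(2^m)$ (the latter not arising here since $q$ is even) with $2^m \mid 2^f$.

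Next, for each subgroup type I would extract the stated divisibility conclusion on $\mathrm{cd}(G|\theta)$ using Lemma \ref{fraction} together with the already-quoted structure lemmas of this section. If $N$ lies in a Borel subgroup: when $N$ is abelian (contained in the cyclic torus), $[S:N]$ is divisible by $q(q\pm1)$ and one gets degree divisible by $(q^2-1)\theta(1)$ after a short index computation, but the sharper statement (b) requires $N$ to be an elementary abelian $2$-group — here the point is that if $N$ has a nontrivial element of odd order it would be conjugate into the torus and one falls into case (c); so assume $N \leq E$, giving $[S:N]$ divisible by $q^2-1$, hence (b) via Lemma \ref{fraction}. When $N$ is a Frobenius group with elementary abelian $2$-kernel, apply Lemma \ref{Frobenius} directly, noting that $p = 2$ there and $(q^2-1)/(2,q-1) = q^2-1$ since $q$ is even; this yields (d). When $N$ is dihedral (or contained in one), $[S:N]$ is divisible by $\tfrac{1}{2}q(q\mp1)$, so some degree is divisible by $2(q\mp1)\theta(1)$ — but I must be careful, as stated the conclusion is "$2(q-1)\theta(1)$ or $2(q+1)\theta(1)$," which follows from looking at both dihedral types and possibly passing through the extendibility of $\theta$ to $I$. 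The cases $N \cong A_4$ or $S_4$ should be absorbed into (c) or (e): $A_4 \leq A_5$ and $S_4$ forces $3 \mid q^2-1$ together with $[S:N]$ divisible by $q(q-1)(q+1)/24$, which for $f \geq 4$ is divisible by both $q-1$ and $q+1$ up to small factors, again landing in (b) or (c).

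The case $N \cong A_5$ (case (e)) and $N \cong \mathrm{PSL}_2(2^m)$ (case (f)) are the substantive ones. For (f): since $2^m \mid 2^f$ means $m \mid f$ and $m \neq f$, the natural character of $\mathrm{PSL}_2(2^m)$ gives degrees $2^m \pm 1$; extending or inducing through $I$ and using Lemma \ref{fraction} with the index $[S : \mathrm{PSL}_2(2^m)] = 2^{f-m}(2^{2f}-1)/(2^{2m}-1)$, one produces a degree divisible by $2^{f-m}\theta(1)(2^m\pm1)(2^{2f}-1)/(2^{2m}-1)$ — here I would invoke Lemma \ref{good} to guarantee either that $\theta$ extends (so Gallagher gives the full product) or that the degree over $\theta$ picks up the two extra primes in $\pi(S/N)$, and check that in either branch the claimed divisor appears. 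For (e), $N \cong A_5$: the exceptional conclusion in Lemma \ref{good} allows $\theta$ to extend (since $A_5$ is one of the two exceptional quotients), so Gallagher gives $\mathrm{cd}(G|\theta) \supseteq \{m\theta(1)\mid m\in\mathrm{cd}(A_5)\}$; but more is needed — the claim that $\Delta(G)[(\pi(S)\cup\pi(\theta(1)))-\{5\}]$ is complete. This I would get by combining Lemma \ref{trick} (applied with $\mathcal{K}$ a suitable $\mathrm{PSL}_2$-section, or rather by a direct argument) with the fact that $[S:A_5]$ is divisible by every prime of $\pi(S)\setminus\{5\}$ to a power making all products of such primes appear in $\mathrm{cd}(G|\theta)$; concretely, for each pair $r,s \in \pi(S)\cup\pi(\theta(1))$ with $r,s\neq 5$ one exhibits a character over $\theta$ whose degree is divisible by $rs$.

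I expect the main obstacle to be case (f) — bookkeeping the precise $2$-adic and Zsigmondy contributions to show the degree is divisible by exactly $2^{f-m}\theta(1)(2^m\pm1)(2^{2f}-1)/(2^{2m}-1)$ and not merely something coarser, and handling the dichotomy from Lemma \ref{good} (extendible versus two-prime divisibility) uniformly so that the same explicit divisor is produced in both sub-cases. A secondary difficulty is making sure Dickson's list has been exhausted: one must confirm that no subgroup of $\mathrm{PSL}_2(2^f)$ falls outside the six stated cases (in particular that the "line stabilizer" / Borel sub-case really does reduce to (b) or to the Frobenius case (d) according to whether $N$'s complement is trivial or not), and that the small exceptional subgroups $A_4, S_4$ do not need a seventh case.
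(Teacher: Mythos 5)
Your skeleton (run through Dickson's list for subgroups of $\mathrm{PSL}_2(2^f)$ and split into the six cases) is the same as the paper's, and your treatment of case (a) and of the Frobenius case via Lemma \ref{Frobenius} matches. But the engine you cite for the divisibility statements is wrong: Lemma \ref{fraction} says that $\chi(1)/\theta(1)$ \emph{divides} $[G:R(G)]$, i.e.\ it gives an upper bound on the ratio, so it cannot show that every $m\in\mathrm{cd}(G|\theta)$ is \emph{divisible} by $[S:N]\theta(1)$. The correct tool (the one the paper uses) is Clifford's theorem: when $I<G$, every $\chi\in\mathrm{Irr}(G|\theta)$ is induced from $I$, so $\chi(1)=[G:I]\psi(1)$ with $\theta(1)\mid\psi(1)$, and the index computation then gives (b) and (c) directly.

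The more serious gap is in cases (e) and (f), where you invoke Lemma \ref{good} to obtain extendibility of $\theta$. That lemma only offers a dichotomy; it does not assert that $\theta$ extends when the quotient is $A_5$, and its ``two distinct primes'' branch gives only crude divisibility, which cannot yield the exact membership $2^{f-m}\theta(1)(2^m\pm 1)(2^{2f}-1)/(2^{2m}-1)\in\mathrm{cd}(G|\theta)$ claimed in (f). The paper's mechanism is the Schur multiplier/Schur cover: in (f), since the Schur multiplier of $\mathrm{PSL}_2(2^m)$ ($2^m\neq 4$) is trivial, $\theta$ extends to $I$, Gallagher gives $\theta(1)(2^m\pm1)\in\mathrm{cd}(I|\theta)$, and Clifford induction multiplies by $[G:I]=2^{f-m}(2^{2f}-1)/(2^{2m}-1)$; in (e), since $\mathrm{SL}_2(5)$ is the Schur cover of $A_5$, one gets (whether or not $\theta$ extends) that $\theta(1)q(q^2-1)/20$ or $\theta(1)q(q^2-1)/10$ lies in $\mathrm{cd}(G|\theta)$, and the prime set of that single degree already contains $(\pi(S)\cup\pi(\theta(1)))-\{5\}$, which is exactly what makes the induced subgraph complete --- your proposed detour through Lemma \ref{trick} does not apply, since its hypothesis requires normal $\mathrm{PSL}_2$/$\mathrm{SL}_2$ subgroups of $G$, not an $A_5$ section of an inertia quotient. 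Two smaller slips in your reading of Dickson's list: for $q$ even, $S_4$ is not a subgroup of $\mathrm{PSL}_2(q)$ at all (its Sylow $2$-subgroups are not elementary abelian), and $A_4$ is itself a Frobenius group with elementary abelian $2$-kernel, so it belongs to case (d), not to (c) or (e).
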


\begin{proof}
 We use Dickson's list to determine the structure of $N$. If $N=S$, then as the Schur multiplier of $S$ is trivial, Gallagher's Theorem implies that $\rm{cd}(G|\theta)=\{m\theta(1)|\,m\in \rm{cd}(S)\}$. Also if $N$ is either an elementary abelian $2$-group or contained in a dihedral group, then using Clifford's Theorem, we are done. Next, if  $N$ is a Frobenius group whose kernel is an elementary abelian $2$-group, by Lemma \ref{Frobenius}, we have nothing to prove. Now let $N\cong A_5$. Since $\rm{SL}_2(5)$ is the Schur representation of $A_5$, $m:=\theta(1) q(q^2-1)/20$ or $n:=\theta(1) q(q^2-1)/10$ is a character degree in $\rm{cd}(G|\theta)$. Hence $\Delta(G)[(\pi(S)\cup \pi(\theta(1)))-\{5\}]$ is a complete graph and we are done. Finally, if  $N\cong \rm{PSL}_2(2^m)$, where $m\neq f$ is a positive divisor of $f$ and $2^m\neq 4$, then as the Schur  multiplier of $\rm{PSL}_2(2^m)$ is trivial, using Clifford's Theorem, $b:=2^{f-m}\theta(1)(2^m\pm 1)(2^{2f}-1)/(2^{2m}-1) \in \rm{cd}(G|\theta)$. It completes the proof.
 \end{proof}

Now we state a useful assertion which will be required in the next sections.

\begin{lemma}\label{power}
Let $G$ be a finite perfect group, $f\geqslant 4$ be an integer, $q:=2^f$ and $S:=G/R(G)\cong \rm{PSL}_2(q)$. If  either\\
\textbf{a)}
There exists $\epsilon \in \{\pm 1\}$ such that in $\Delta(G)$, no prime in $\pi(2(q+\epsilon))$ is adjacent to any prime in $\pi(q-\epsilon)$, or\\
 \textbf{b)} $|\pi(S)|=5\, or \, 6$ and $\Delta(G)$ is $K_4$-free.\\
 Then $G\cong \rm{PSL}_2(q)$.
\end{lemma}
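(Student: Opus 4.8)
The goal is to show that a finite perfect group $G$ with $S:=G/R(G)\cong \mathrm{PSL}_2(2^f)$, $f\geqslant 4$, actually equals $\mathrm{PSL}_2(2^f)$ under either hypothesis (a) or (b). The natural strategy is to prove the contrapositive: assuming $R:=R(G)\neq 1$, I will exhibit in $\Delta(G)$ either a $K_4$ (contradicting (b)), a path-of-length-four or other forbidden configuration, or an edge between $\pi(2(q+\epsilon))$ and $\pi(q-\epsilon)$ (contradicting (a)). Since $G$ is perfect, if $R\neq 1$ we may pick a chief factor inside $R$ and, after passing to a quotient, assume $R$ is a minimal normal subgroup, hence an elementary abelian $r$-group for some prime $r$ (or a direct product of isomorphic simple groups, but then $\rho(R)$ would be large and one gets a $K_4$ quickly; I would dispose of the nonabelian case first using that $\Delta$ of a nonabelian simple group is nonempty and that its primes are all adjacent to $\rho(S)$-primes via Lemma~\ref{trick}-type reasoning). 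So the heart of the matter is $R$ elementary abelian.

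With $R$ elementary abelian of order a power of $r$, take $\theta\in\mathrm{Irr}(R)$ nonprincipal (such $\theta$ exists since $G$ perfect forces $R\not\leqslant Z(G)$... more carefully: if every $\theta$ were $G$-invariant and $R$ central, $G$ would be a central extension of $S$, but the Schur multiplier of $\mathrm{PSL}_2(2^f)$ is trivial, so $R\leqslant Z(G)$ forces $R=1$; hence some $\theta$ is non-invariant or at least the non-central structure is visible). Now apply Lemma~\ref{otf} to this $\theta$ with $I:=I_G(\theta)$, $N:=I/R$. In cases (b),(c),(d) of Lemma~\ref{otf}, every degree in $\mathrm{cd}(G|\theta)$ is divisible by $2(q+1)\theta(1)$, $2(q-1)\theta(1)$, or $(q^2-1)\theta(1)$, which produces edges joining $r\in\rho(R)$ (or $\pi(\theta(1))$) to primes in $\pi(q-1)\cup\pi(q+1)$; combined with the fact that $\mathrm{cd}(S)$ already forces $\{2\}\cup\pi(q-1)$ and $\{2\}\cup\pi(q+1)$ to be cliques and $2$ adjacent to everything, this gives a $K_4$ as soon as $|\pi(q-1)|+|\pi(q+1)|$ is large enough — and a divisibility by $(q^2-1)$ bridges $\pi(q-1)$ to $\pi(q+1)$, immediately contradicting both the $K_4$-freeness (since $\mathrm{diam}=3$ forces $|\pi(q-1)|,|\pi(q+1)|\geqslant 1$ and $2$ joins them, so three such vertices plus one more give $K_4$) and, under (a), the non-adjacency assumption. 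Case (f) of Lemma~\ref{otf}, $N\cong\mathrm{PSL}_2(2^m)$, similarly yields a degree divisible by $2^{f-m}\theta(1)(2^m\pm1)(2^{2f}-1)/(2^{2m}-1)$, whose prime set spans too much of $\rho(S)$. Case (e), $N\cong A_5$, makes $\Delta(G)[(\pi(S)\cup\pi(\theta(1)))-\{5\}]$ complete, which is a very large clique and again forces $K_4$ unless $|\pi(S)|$ is tiny — but $f\geqslant 4$ gives $|\pi(S)|\geqslant 3$ with the extra prime from $\theta(1)$ or $5$ itself, enough for $K_4$. That leaves case (a) of Lemma~\ref{otf}: $N=S$, $\theta$ extends, and $\mathrm{cd}(G|\theta)=\{m\theta(1)\mid m\in\mathrm{cd}(S)\}$; here $\theta$ is $G$-invariant, and I would then rerun the argument with $R$ replaced by a larger normal subgroup, or invoke that an invariant $\theta$ extending to $G$ with trivial Schur multiplier forces, after quotienting by $\ker\theta$, a central $R$ — contradiction with perfectness as above, unless $\theta$ is trivial on all of $R$, i.e.\ $R=1$.

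For the numerics I will need the structural lemmas from Section~3: Lemma~\ref{evenfive} and Lemma~\ref{interest} constrain $|\pi(q\pm1)|$ and $f$ when hypothesis (a) or the $K_4$-freeness forces $|\pi(q-1)|$ or $|\pi(q+1)|$ to be small; Lemma~\ref{chpsl}(a) gives the component structure of $\Delta(S)$ itself so I can count how many vertices are already present and mutually adjacent before adding $\rho(R)$. In case (b) with $|\pi(S)|=5$ or $6$, the point is that $\Delta(S)$ with components $\{2\},\pi(q-1),\pi(q+1)$ has, after adding even one bridging prime, enough density: if $|\pi(q-1)|\geqslant 2$ then $\{2\}\cup\pi(q-1)$ is a triangle, and any prime of $\rho(R)$ adjacent (via a degree divisible by $2(q-1)$) to all of $\pi(q-1)\cup\{2\}$ completes a $K_4$; the only escape is $|\pi(q-1)|=|\pi(q+1)|=1$, forcing $|\pi(S)|=3<5$, contradiction. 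The main obstacle I anticipate is case (a) of Lemma~\ref{otf} (the "$N=S$, $\theta$ extends" branch): there the degrees in $\mathrm{cd}(G|\theta)$ mirror $\mathrm{cd}(S)$ and carry \emph{no} new large divisibility, so no edge to $\rho(R)$ is forced directly — one must instead leverage invariance of $\theta$ across \emph{all} of $\mathrm{Irr}(R)$, the triviality of the Schur multiplier of $S$, and the perfectness of $G$ to conclude $R$ is central and hence trivial, and making that reduction airtight (handling the action of $G$ on $\mathrm{Irr}(R)$, the possibility of several orbits, and stepping down a chief series) is the delicate part of the argument.
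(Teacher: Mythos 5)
Your outline has a genuine gap, and it is located exactly where you think the argument is easy, not where you think it is hard. Since you reduce to $R$ elementary abelian, every nonprincipal $\theta\in\mathrm{Irr}(R)$ is \emph{linear}, so $\theta(1)=1$ contributes no new prime; moreover the primes of $|R|$ may already lie in $\pi(S)$ (e.g.\ $R$ a $2$-group), so $\rho(R)$ need not supply a ``bridging'' vertex at all. Consequently, in the small-inertia cases of Lemma~\ref{otf} (cases (b),(c),(d): $N$ elementary abelian, contained in a dihedral group, or Frobenius with non-extendible $\theta$), the only information you get is that the degrees over $\theta$ are divisible by $2(q-1)$, $2(q+1)$ or $p(q+1)$ — prime sets that lie inside a single clique $\{2\}\cup\pi(q\mp1)$ of $\Delta(S)$. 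When $|\pi(q-1)|,|\pi(q+1)|\leqslant 2$ (which is compatible with $|\pi(S)|=5$ or $6$), this produces no $K_4$; and under hypothesis (a), divisibility by $2(q+\epsilon)$ never creates an edge between $\pi(2(q+\epsilon))$ and $\pi(q-\epsilon)$, so no contradiction with (a) either. Your claim that these cases ``immediately'' contradict the hypotheses is therefore false, and your case (b) count (``the only escape is $|\pi(q-1)|=|\pi(q+1)|=1$'') silently assumes a new prime $r\notin\pi(S)$ divides $\theta(1)$, which fails precisely because $\theta$ is linear. Conversely, the branch you single out as the main obstacle ($N=S$, $\theta$ extends) is the easy one: a $G$-invariant nonprincipal linear $\theta$ with $R$ abelian, $G$ perfect and the Schur multiplier of $S$ trivial is killed exactly as you sketch.

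What is missing is the paper's actual mechanism for the small-inertia cases. The paper first forces connectivity of $\Delta(G)$ (the two- and three-component cases are excluded by external results of Lewis--White and Tong-Viet), reduces to $R(G)$ abelian by induction modulo $R(G)'$, and then argues in two stages: if $R(G)$ has nontrivial odd part, the fact that every inertia quotient is cyclic of odd order or dihedral forces, via $K_4$-freeness, all inertia quotients of odd-order characters to contain a common normal Sylow $s$-subgroup of $S$ for some odd $s\mid q+\epsilon$; this means the pair $(S,H)$ satisfies the condition $\mathcal{N}_s$, which is impossible for $\mathrm{PSL}_2(q)$ by Lemmas~\ref{irreducible} and~\ref{module}. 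If $R(G)$ is a $2$-group, the paper uses the Steinberg tensor-product analysis (Lemmas~\ref{tensor} and~\ref{center}) to exhibit, inside an irreducible $S$-submodule of $\{\lambda\in\mathrm{Irr}(R(G)):\lambda^2=1\}$, a character $\lambda_0$ with trivial stabilizer in $S$, so every degree in $\mathrm{cd}(G|\lambda_0)$ is divisible by $|S|$ and hence by at least four primes — the only point in the proof where a $K_4$ is actually produced. Without these two module-theoretic ingredients (or a substitute for them), your divisibility-only strategy cannot close the dihedral/cyclic/Frobenius cases, so the proposal as it stands does not prove the lemma.
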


\begin{proof}
On the contrary, assume that $G$ is not simple and hence $R(G)$ is non-trivial. If $\Delta(G)$ has three connected components, then by Theorem 4.1 of \cite{[nsc]}, $G$ is simple and we obtain a contradiction. Next, suppose $\Delta(G)$ has two connected components. Then as $G$ is not simple, by Lemma 5 of \cite{Tong}, $G/L \cong \rm{SL}_2(q_1)$ and $G$ has an irreducible character of degree $q^2_1-1$, where $L$ is an elementary abelian subgroup of $G$ and $q_1\geqslant 4$ is a prime power. Hence as $q$ is even, $q=q_1$ and $\Delta(G)[\pi(q^2-1)]$ is a complete graph which is a contradiction. Therefore $\Delta(G)$ is connected. Assume that $R(G)$ is non-abelian. Then $R(G)^\prime$ is non-trivial and thus $G/R(G)^\prime$ satisfies the hypothesis of the lemma with smaller order, hence $G/R(G)^\prime \cong S$. Therefore $R(G)=R(G)^\prime$ and since $R(G)$ is solvable, we must have $R(G)=1$. It is a contradiction. Thus $R(G)$ is abelian. Since $\Delta(G)$ is connected, we have $|\pi(S)|=5\, or \, 6$ and $\Delta(G)$ is $K_4$-free. Let $1\neq \lambda \in \rm{Irr}(R(G))$, $I:=I_G(\lambda)$ and $N:=I/R(G)$.
 As the Schur multiplier of $S$ is trivial and $G$ is perfect, $N< S$. Since $\Delta(G)$ is $K_4$-free, $\Delta(S)$ is too. Hence by Lemma \ref{chpsl}, $\Delta(S)\cong K_1\cup K_1\cup K_3$, $K_1\cup K_2\cup K_2$ or  $K_1\cup K_2\cup K_3$.  Thus using this fact that $\Delta(G)$ is $K_4$-free, by Lemmas \ref{interest}, \ref{evenfive} and \ref{otf}, we deduce that $N$ is either a non-trivial cyclic group of odd order, a dihedral group or a Frobenius group whose kernel is an elementary abelian $2$-group.
 Now we claim that $R(G)$ is a $2$-group. On the contrary, we assume that $R(G)$ is not a $2$-group. Then $H:=\{\lambda \in \rm{Irr}(R(G))|\, o(\lambda)\, is\,odd\}$ is non-trivial. Let $1\neq \lambda \in H$, $I:=I_G(\lambda)$ and $N:=I/R(G)$. If $N$ is a Frobenius group whose kernel is an elementary abelian $2$-group, then we deduce that $\lambda$ is extendible to $I$ and so by Lemma \ref{Frobenius}, $\Delta(G)[\pi(q^2-1)]$ is a complete graph. It is a contradiction as $\Delta(G)$ is $K_4$-free. Hence $N$ is either a non-trivial cyclic group of odd order or a dihedral group. Thus as $\Delta(G)$ is $K_4$-free, there exists $\epsilon \in \{\pm 1\}$ such that for some odd prime $s\in \pi (q+\epsilon)$, for every $1\neq \lambda \in H$, we have $I_G(\lambda)/R(G)$ contains a $s$-Sylow subgroup $Q$ of $S$ so that $Q\lhd I_G(\lambda)/R(G)$. Hence
  the pair $(S,H)$ satisfies $\mathcal{N}_s$ and using Lemmas \ref{irreducible} and \ref{module}, we obtain a contradiction. Thus $R(G)$ is an abelian $2$-group. Therefore $W:=\{\lambda \in \rm{Irr}(R(G))|\, \lambda^2=1\}$ can be viewed as a $S$-module over $GF(2)$. Let $V$ be an irreducible $S$-submodule of $W$ over $GF(2)$. Since $G$ is perfect, $V$ is non-trivial. Thus by Lemma \ref{center}, there exists a non-trivial element $\lambda_0\in V$ such that $I_G(\lambda_0)/R(G)= C_S(\lambda_0)= 1$. Hence for every $m\in \rm{cd}(G|\lambda_0)$, we have $|\pi(m)|\geqslant 4$ and it is a contradiction. Thus $G\cong \rm{PSL}_2(q)$ and the proof is completed.
\end{proof}

\begin{lemma}\label{direct product}
Let $G$ be a finite group, $f\geqslant 4$ be an integer and $q:=2^f$. Also let for some $R(G)<M\triangleleft G$, $G/R(G)$ be an almost simple group with socle $S:=M/R(G)\cong \rm{PSL}_2(q)$. If either\\
\textbf{a)}
There exists $\epsilon \in \{\pm 1\}$ such that in $\Delta(G)$, no prime in $\pi(2(q+\epsilon))$ is adjacent to any prime in $\pi(q-\epsilon)$, or\\
 \textbf{b)} $|\pi(S)|=5\, or \, 6$ and $\Delta(G)$ is $K_4$-free.\\
 Then $\rm{diam}(\Delta(G))\leqslant 2$.
\end{lemma}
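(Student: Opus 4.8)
The plan is to reduce to the perfect situation already settled in Lemma~\ref{power}, locate inside $G$ a normal copy of $\rm{PSL}_2(q)$, and then read off the structure of $\Delta(G)$ from the centraliser of that copy by means of Lemma~\ref{trick}. Concretely, I would first set $L:=G^{(\infty)}$, the last term of the derived series of $G$. Since the outer automorphism group of $\rm{PSL}_2(2^f)$ is cyclic, $(G/R(G))/S$ is abelian; as $S$ is perfect this forces $(G/R(G))^{(\infty)}=S$, hence $LR(G)=M$. Therefore $L$ is perfect, $L\cap R(G)=R(L)$ is its solvable radical, and $L/R(L)\cong S\cong \rm{PSL}_2(2^f)$. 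Because for every normal subgroup $N\lhd G$ each $\psi(1)$ with $\psi\in\rm{Irr}(N)$ divides some $\chi(1)$ with $\chi\in\rm{Irr}(G)$ (Clifford's theorem), $\Delta(L)$ is a subgraph of $\Delta(G)$, so $L$ inherits the corresponding hypothesis of Lemma~\ref{power}; that lemma then yields $L\cong\rm{PSL}_2(q)$, whence $R(L)=L\cap R(G)=1$ and $M=L\times R(G)$.

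Next I would put $C:=C_G(L)$. Then $L\cap C=Z(L)=1$ (so $LC=L\times C$), $R(G)\leq C$, and $C\cap M=R(G)$; thus $C/R(G)$ embeds into $G/M\hookrightarrow\rm{Out}(S)$, so $C$ is solvable, while $G/C$ lies between $\rm{Inn}(L)\cong\rm{PSL}_2(2^f)$ and $\rm{Aut}(L)$ and is almost simple with socle $\cong S$. Applying Lemma~\ref{trick} with $\mathcal{K}=\{L\}$ --- the underlying prime being $u=2$, the exceptional case of that lemma is vacuous --- gives $\rho(G)=\rho(G/C)\cup\rho(C)$ together with: every $t\in\rho(C)$ is adjacent in $\Delta(G)$ to every prime $\neq t$ of $\pi(|G/C|)\supseteq\pi(S)$. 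Note that $|\pi(S)|=|\pi(\rm{PSL}_2(2^f))|\geq 3$ since $f\geq 4$.

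It then remains to check that any two distinct $a,b\in\rho(G)$ lying in the same component of $\Delta(G)$ satisfy $d_{\Delta(G)}(a,b)\leq 2$. If $\rho(C)\neq\emptyset$ this is immediate from the previous step: choosing $t\in\rho(C)$, either $t\in\{a,b\}$, when $t$ is already adjacent to the other, or $t\notin\{a,b\}$, when (using $|\pi(S)|\geq 3$) one may even take $t$ inside $\pi(S)\setminus\{a,b\}$, a common neighbour of $a$ and $b$.

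This leaves the case $\rho(C)=\emptyset$, i.e.\ $C$ abelian; put $d:=[G/C:S]=[G:LC]\mid f$. Restricting an irreducible character of $G$ to the normal subgroup $LC=L\times C$ and using extendibility of invariant characters over cyclic sections of $G/LC$ (cyclic of order $d$), one obtains $\rm{cd}(G)\subseteq\{\psi(1)\,e:\psi(1)\in\rm{cd}(\rm{PSL}_2(q)),\ e\mid d\}$; hence $\rho(G)=\rho(G/C)\subseteq\{2\}\cup\pi(q-1)\cup\pi(q+1)\cup\pi(d)$ and, when $2\nmid d$, every neighbour of the prime $2$ in $\Delta(G)$ is an odd prime dividing $d$. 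On the other hand Lemma~\ref{lw} gives $(q-1)d,(q+1)d\in\rm{cd}(G)$, so $\pi(q-1)\cup\pi(d)$ and $\pi(q+1)\cup\pi(d)$ span complete subgraphs of $\Delta(G)$ sharing $\pi(d)$. If $d=1$ then $G=L\times C$ and $\Delta(G)=\Delta(\rm{PSL}_2(q))$, each of whose components is complete. If $2\mid d$ then $(q\pm1)d$ are even, so $2$ is adjacent to all of $\rho(G)\setminus\{2\}$. If $d>1$ and $2\nmid d$, any odd prime $p\mid d$ is adjacent to all of $\pi(q-1)\cup\pi(q+1)\cup\pi(d)$, while $2$ is either isolated or adjacent only to such primes; in every subcase each connected component of $\Delta(G)$ has diameter at most $2$, so $\rm{diam}(\Delta(G))\leq 2$. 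I expect this last case --- $C=C_G(G^{(\infty)})$ abelian, where Lemma~\ref{trick} supplies no bridging prime --- to be the main obstacle: one has to describe $\rm{cd}(G)$ finely enough, via extendibility across the cyclic sections together with Lemma~\ref{lw}, to pin down the neighbours of the defining-characteristic prime $2$ and rule out an induced path of length three.
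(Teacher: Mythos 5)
Your proposal is correct and takes essentially the same route as the paper: the paper likewise passes to the last term of the derived series (of $M$, which coincides with $G^{(\infty)}$), applies Lemma \ref{power} to obtain a normal subgroup $H\cong \rm{PSL}_2(q)$ with $H\cap R(G)=1$, sets $C:=C_G(H)$, and concludes with Lemmas \ref{trick} and \ref{lw}. The only difference is expository: you spell out the final diameter estimate in detail (notably the case $\rho(C)=\emptyset$, handled via Lemma \ref{fraction} and the analysis of the neighbours of $2$), which the paper compresses into a single sentence.
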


\begin{proof}
 Let $H$ be the last term of the derived series of $M$. Since $M$ is non-solvable, $H$ is non-trivial. Let $N:=H\cap R(G)$. As $H/N \cong HR(G)/R(G)\lhd M/R(G)\cong  S$, we deduce that $H/N\cong S$. Hence using Lemma \ref{power}, $N=1$ and $H\cong \rm{PSL}_2(q)$. Let $C:=C_G(H)$. It is easy to see that $R(G)\subseteq C$. Therefore using Lemmas \ref{trick} and \ref{lw}, we deduce that $\rm{diam}(\Delta(G))\leqslant 2$.
  \end{proof}

    We end this section with the following result.
 \begin{lemma}\label{sen}
Suppose $G$ is a finite group such that $S:=G/R(G)\cong \rm{PSL}_2(q)$ where for some odd prime $p$ and integer $f\geqslant 1$, $q:=p^f$, and $|\pi(S)|\geqslant 4$. Also assume that $\theta\in \rm{Irr}(R(G))$, $I:=I_G(\theta)$, $N:=I/R(G)$ and when $|\pi(S)|\leqslant 5$, $\theta(1)$ is divisible by some $r\in \rho(G)-\pi(S)$. If $\Delta(G)$ is $K_4$-free and $N\neq S$, then  $\Delta(S)\cong K_1\cup K_3$ and $N$ is either a Frobenius group whose kernel is an elementary abelian $p$-group and $\theta$ is not extendible to $I$, or contained in a dihedral group.
\end{lemma}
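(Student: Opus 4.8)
\emph{Shape of $\Delta(S)$.} Since $S$ is a quotient of $G$, $\mathrm{cd}(S)\subseteq\mathrm{cd}(G)$, so $\Delta(S)$ is a subgraph of $\Delta(G)$ and hence $K_4$-free. As $p$ is odd and $|\pi(S)|\geqslant 4$, we have $q>5$, so by Lemma \ref{chpsl} the graph $\Delta(S)$ is the disjoint union of the isolated vertex $\{p\}$ with a graph on $\pi(q^2-1)$, where $|\pi(q^2-1)|=|\pi(S)|-1\geqslant 3$. Using the same lemma and $K_4$-freeness there remain only two possibilities: \textbf{(i)} $q-1$ or $q+1$ is a power of $2$, the component on $\pi(q^2-1)$ is complete, hence has exactly three vertices, and $\Delta(S)\cong K_1\cup K_3$, $|\pi(S)|=4$; or \textbf{(ii)} neither of $q\pm1$ is a power of $2$, that component is the join of $\{2\}$ with $K_{|M|}\cup K_{|P|}$ for the non-empty sets $M:=\pi(q-1)\setminus\{2\}$, $P:=\pi(q+1)\setminus\{2\}$, and $1\leqslant|M|,|P|\leqslant 2$. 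The plan is to show that in shape (ii) one must have $N=S$, and that in shape (i) the subgroup $N$ is as claimed.

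\emph{Clifford bookkeeping.} For $\chi\in\mathrm{Irr}(G\mid\theta)$, write $\chi=\psi^{G}$ with $\psi\in\mathrm{Irr}(I\mid\theta)$; then $\chi(1)=[S:N]\,\psi(1)$ and $\psi(1)/\theta(1)$ is a character degree of a Schur cover $\widetilde N$ of $N$ (equal to a degree of $N$ when $\theta$ extends to $I$, by Gallagher). Three facts drive the argument: (a) unless $N$ contains a Sylow $p$-subgroup of $S$ --- i.e. unless $N$ is a Borel-type subgroup of Dickson's list whose normal elementary abelian part has order $q$ --- the prime $p$ divides $[S:N]$, hence every member of $\mathrm{cd}(G\mid\theta)$; (b) for $N$ cyclic or dihedral a $2$-adic valuation count shows $2$ divides $[S:N]$ (or, for $N$ dihedral, $2[S:N]$); (c) when $|\pi(S)|\leqslant 5$, the prime $r\in\rho(G)\setminus\pi(S)$ divides $\theta(1)$, hence every member of $\mathrm{cd}(G\mid\theta)$, and $r\notin\{p\}\cup\pi(q^2-1)$.

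\emph{Sweeping Dickson's list.} For $N\neq S$, in each case other than the three in the conclusion I would exhibit four pairwise adjacent primes of $\Delta(G)$. The template is: a degree $m\in\mathrm{cd}(G\mid\theta)$ divisible by $p$, by $2$, by a further prime of $\pi(q^2-1)$ and by $r$ (and for $|\pi(S)|\geqslant 6$ the fourth prime is already a member of $\pi(q^2-1)$, so $r$ is not needed); in the delicate cases one spreads the clique over two degrees of $\mathrm{cd}(G\mid\theta)$ together with edges already present inside $\pi(q^2-1)$ (complete in shape (i); with $2$ adjacent to all of $M\cup P$ in shape (ii)). If $N$ is an elementary abelian $p$-group, $A_4$, $S_4$ or $A_5$, then $[S:N]$ retains enough primes of $\pi(q^2-1)$ and a small degree of $\widetilde N$ supplies any remaining small prime. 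If $N\cong\mathrm{PSL}_2(p^m)$ ($m\mid f$, $m<f$) or $\mathrm{PGL}_2(p^m)$ ($2m\mid f$), a Zsigmondy primitive prime divisor of $p^{2f}-1$ divides $[S:N]$ and lies in $\pi(q^2-1)$, while a degree $p^m\pm1$ of $\widetilde N$ (or the degree furnished by Lemma \ref{general}) supplies a further prime; the Zsigmondy exceptions vanish because the relevant small groups are $\mathrm{PSL}_2(3)\cong A_4$, $\mathrm{PGL}_2(3)\cong S_4$ and $\mathrm{PSL}_2(5)\cong A_5$, already covered. If $N$ is a Frobenius group with elementary abelian $p$-kernel and cyclic complement of order $t\geqslant 2$, Lemma \ref{Frobenius} applies: if $\theta$ extends to $I$, some degree is divisible by $\theta(1)(q^2-1)/(2,q-1)$, hence by $\pi(q^2-1)\cup\{r\}$, a $K_4$; so $\theta$ is not extendible, and then every degree of $\mathrm{cd}(G\mid\theta)$ is divisible by $p(q+1)\theta(1)$, which in shape (ii) already contains a $K_4$, and in shape (i) with kernel of order $<q$ forces, via a Zsigmondy prime of $p^f-1$, the complement order $t$ to be divisible by every odd prime of $q-1$ --- impossible. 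Hence no $N\neq S$ survives in shape (ii), so $\Delta(S)\cong K_1\cup K_3$; the only Frobenius $N$ that survives has elementary abelian $p$-kernel (of order $q$), $t\geqslant 2$ and $\theta$ not extendible (the first alternative); and any other surviving $N$ is cyclic of order dividing $(q\pm1)/(2,q-1)$ or dihedral. For these last ones $\mathrm{cd}(\widetilde N)\subseteq\{1,2\}$, so $\mathrm{cd}(G\mid\theta)\subseteq\{[S:N]\theta(1),\,2[S:N]\theta(1)\}$; combining $p\mid[S:N]$, $2\mid[S:N]$, $r\mid\theta(1)$ and $K_4$-freeness forces $\pi([S:N])=\{p,2\}$, which by the valuation and Zsigmondy estimates forces $q-1$ or $q+1$ to be a power of $2$ (consistent with shape (i)) and $N$ to lie inside a dihedral subgroup of $S$ (the second alternative).

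\emph{Main obstacle.} The uniform ``find a $K_4$'' step is routine once the template is fixed; the real work lies in the borderline situations --- $q-1$ or $q+1$ a power of $2$ with $N$ cyclic, dihedral, or the full Borel subgroup and $\theta$ non-extendible --- where no single character degree contains a $K_4$ and one must handle two degrees of $\mathrm{cd}(G\mid\theta)$, the complete triangle on $\pi(q^2-1)$, the precise $2$-adic valuations of $q\pm1$, Lemma \ref{equality} and Zsigmondy's theorem simultaneously. One must also check that the hypothesis ``$r\mid\theta(1)$ for some $r\in\rho(G)\setminus\pi(S)$'' is used exactly to dispose of the cases $|\pi(S)|\in\{4,5\}$ --- for $|\pi(S)|\geqslant 6$ the fourth prime is available inside $\pi(q^2-1)$, which is why that hypothesis is imposed only when $|\pi(S)|\leqslant 5$ --- and one must make the non-extendibility bookkeeping precise through the Schur covers of dihedral groups, $A_4$, $S_4$ and $A_5$.
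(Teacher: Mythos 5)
Your strategy is the same as the paper's (pin down $\Delta(S)$ via Lemma \ref{chpsl} and $K_4$-freeness, then sweep Dickson's list with Clifford/Gallagher, Schur multipliers, Lemmas \ref{Frobenius} and \ref{general}, and Zsigmondy), but what you have written is a plan, and the steps you defer are exactly where the content lies, so it does not yet prove the lemma. The concrete failure is your bookkeeping fact (b) and the final chain built on it: it is not true that $2$ divides $[S:N]$ when $N$ is dihedral. If $N$ is the full dihedral subgroup of order $q-1$ and $q\equiv 1\pmod 4$, then $[S:N]=q(q+1)/2$ is an odd number times a power of $p$ (your parenthetical fallback ``$2$ divides $2[S:N]$'' says nothing). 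Hence ``$K_4$-freeness forces $\pi([S:N])=\{p,2\}$, which forces $q\mp1$ to be a $2$-power'' does not follow: with $q\equiv 1\pmod 4$ and $(q+1)/2$ a power of a single odd prime $t$ one only gets $\pi([S:N])=\{p,t\}$, which is compatible with your shape (ii), and the index alone never produces a $K_4$ there. What closes this case --- and what the paper's Case 2 actually uses --- is the extra factor $2$ coming from the characters of $N$ itself: if $\theta$ extends to $I$, Gallagher and $2\in\mathrm{cd}(N)$ give a degree $2[S:N]\theta(1)$, and if $\theta$ does not extend, every degree in $\mathrm{cd}(I|\theta)$ equals $2\theta(1)$ (for instance because $\theta$ extends to the preimage of the cyclic index-$2$ subgroup of $N$); either way one obtains a degree divisible by $\theta(1)p(q\pm1)$, hence a $K_4$ in shape (ii). This argument is absent from your outline.

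Two further points. Your claim that a Frobenius $N$ with kernel of order less than $q$ is impossible (``forces $t$ to be divisible by every odd prime of $q-1$ --- impossible'') is unsupported as stated: it needs Zsigmondy together with Lemma \ref{equality} and a check of the exceptional cases (in shape (i) with $q+1$ a $2$-power, Lemma \ref{equality} gives $f=1$, so such kernels do not exist, while with $q-1$ a $2$-power the value $p(q+1)\theta(1)$ from Lemma \ref{Frobenius}(b) already carries a $K_4$); note also that this refinement is unnecessary, since the conclusion of the lemma does not require the kernel to have order $q$. Finally, the cases $N\cong A_5$, $A_4$, $S_4$, $\mathrm{PSL}_2(p^m)$, $\mathrm{PGL}_2(p^m)$ and $\mathrm{PSL}_2(9)$ with $p=3$ are only gestured at (``a small degree of the cover supplies any remaining small prime''); the paper disposes of them with explicit degrees read off the Schur covers and Zsigmondy primes, and for $A_5$ with $p\neq3$ it needs a two-degree argument using the edge between $2$ and $3$ in $\Delta(S)$. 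Until these computations are actually carried out, the lemma is not established.
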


\begin{proof}
On the contrary, we assume that the statement is not true. Then using Dickson's list, one of the following cases occurs:\\
Case 1.  $N$ is either a Frobenius group whose kernel is an elementary abelian $p$-group, and $\theta$ extends to $I$, or an elementary abelian $p$-group. Then using Lemma \ref{Frobenius} and Clifford's Theorem, there exists $m\in \rm{cd}(G|\theta)$ such that $m$ is divisible by $\theta(1)(q^2-1)/2$. Hence $\Delta(G)[\pi(m)]$ contains a copy of $K_4$ which is a contradiction.\\
Case 2.
$\Delta(S)\ncong K_1\cup K_3$ and $N$ is either a Frobenius group whose kernel is an elementary abelian $p$-group and $\theta$ does not extend to $I$, or  contained in a dihedral group. Then using Lemma \ref{Frobenius} and Clifford's Theorem, we can see that for some $m\in \rm{cd}(G|\theta)$, $m$ is divisible by either $\theta(1)p(q-1)$ or $\theta(1)p(q+1)$. Hence $\Delta(G)[\pi(m)] $ has a copy of $K_4$ and it is a contradiction.\\
Case 3. $N\cong A_5$. Since $\rm{SL}_2(5)$ is the Schur representation of $A_5$, there exist $m,n\in \rm{cd}(G|\theta)$ such that $m=q(q^2-1)\theta(1)/30$ and $n$ is divisible by  $q(q^2-1)\theta(1)/40$. Now we consider the following subcases:\\
a) $p=3$. Clearly $f\geqslant 4$ and hence $|\pi(m)|\geqslant 4$. It is a contradiction.\\
b) $p\neq 3$. Then as $p$ is odd and $N\cong A_5$, $2$ and $3$ are adjacent vertices in $\Delta(S)\subseteq \Delta(G)$. Therefore the induced subgraph of $\Delta(G)$ on $\pi(m)\cup \pi(n)$ has a copy of $K_4$ which is a contradiction.\\
Case 4.  $N$ is isomorphic to either $S_4$ and $16$ divides $q^2-1$, or $A_4$. By Clifford's Theorem, we can see that there exists $m \in \rm{cd}(G|\theta)$ so that $m$ is divisible by one of the values $\theta(1)q(q^2- 1)/|N|$ or $3\theta(1)q(q^2- 1)/2|N|$. Clearly, $|\pi(m)|\geqslant 4$ and we have a contradiction.\\
Case 6. $N \cong \rm{PSL}_2(p^m)$, where $m\neq f$ is a positive divisor of $f$, $p^m\geqslant 7$ and $p^m\neq 9$. There exists an integer $n>1$ so that $f=mn$.
 Now one of the following subcases occurs:\\
a) $n$ is odd. Then as $\rm{SL}_2(p^m)$ is the Schur representation of $\rm{PSL}_2(p^m)$, by Clifford's Theorem, $b:=\theta(1)p^{f-m}(p^f-1)(p^f+1)/(p^m+1)\in \rm{cd}(G|\theta)$. Using Zsigmondy's Theorem, there exists a prime $t$ so that $t| p^f+1$ and $t\nmid p^m+1$. Hence  $ |\pi(tp(p^f-1)\theta(1))|\geqslant 4$ which is a contradiction.\\
b) $n$ is even. Then $p^{2m}-1|p^f-1$. By Clifford's Theorem, $d:=\theta(1)p^{f-m}(p^m+1)(p^f+1)(p^{f}-1)/(p^{2m}-1) \in \rm{cd}(G|\theta)$. Using Zsigmondy's Theorem, there exists a prime $t$ so that  $t|p^f+1$ and $t\nmid p^m+1$. Therefore $|\pi(p(p^m+1)(p^f+1)\theta(1))|\geqslant 4$ and we again obtain a contradiction.\\
 Case 7. $N\cong \rm{PSL}_2(9)$, $p=3$, $f$ is even and $f\neq 2$. Then there exists an integer $n>1$ so that $f=2n$. Looking at the character table of the Schur representation of $A_6$, we can see that for some $m\in \rm{cd}(G|\theta)$, $m$ is divisible by  $2.3^{2(n-1)}(3^{4n}-1)\theta(1)/(3^{4}-1)$. Hence we can see that $|\pi(m)|\geqslant 4$ and it is a contradiction.\\
 Case 8. $N\cong \rm{PGL}_2(p^m)$, where $2m$ is a positive divisor of $f$ and $p^m\neq 3$.  By Lemma \ref{general}, some $m_0\in \rm{cd}(G|\theta)$ is divisible by $\theta(1)p^{f-m}(p^{f}+1)$. Note that by Lemma \ref{equality}, $|\pi(p^f+1)|\geqslant 2$. Hence we can see that $|\pi(m_0)|\geqslant 4$ which is a contradiction.
\end{proof}


\section{Proof of Main Theorem}
$\indent$ In this section, we wish to prove our main result. Note that when $G\cong J_1\times A$, where $A$ is abelian, then it is clear that $|\rho(G)|\neq 5$. We begin with a useful observation.

\begin{lemma}\label{gej}
Let $G$ be a finite group and $G/R(G) \cong J_1$. If $\Delta(G) \cong \Delta(J_1)$, then $R(G)$ is abelian and $G\cong J_1\times R(G)$.
\end{lemma}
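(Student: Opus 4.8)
# Proof Proposal for Lemma \ref{gej}

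\begin{proof}[Proof sketch / plan]
Write $R:=R(G)$, so $G/R\cong J_1$. Since $\Delta(G)\cong\Delta(J_1)$ we have $\rho(G)=\rho(J_1)=\pi(J_1)=\{2,3,5,7,11,19\}$ (six vertices), $\Delta(G)$ is $K_4$-free of diameter $3$, and — by the definition of $\rho$ — for every $q\in\rho(J_1)$ there is $m\in\mathrm{cd}(J_1)$ with $q\mid m$. I also use that $\mathrm{Mult}(J_1)=1$, that a graph of diameter $3$ has no vertex adjacent to all others, and the list of maximal subgroups of $J_1$ from \cite{[At]}: $\mathrm{PSL}_2(11)$ (index $2\cdot7\cdot19$), $2^3{:}7{:}3$ (index $5\cdot11\cdot19$), $2\times A_5$ (index $7\cdot11\cdot19$), together with four further classes whose index is divisible by four distinct primes. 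The plan is first to prove $R$ abelian, then that the extension is a direct product.

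\emph{Step 1: $R$ is abelian.} Suppose not and pick $\theta\in\mathrm{Irr}(R)$ with $d:=\theta(1)>1$; set $I:=I_G(\theta)$, $N:=I/R\le J_1$. Take $\psi\in\mathrm{Irr}(I|\theta)$, so $\chi:=\psi^G\in\mathrm{Irr}(G)$ with $\chi(1)=[J_1:N]\psi(1)$, and $\psi(1)/d$ divides $|N|$ by Lemma~\ref{fraction}. If $N=1$, then $\chi(1)=|J_1|\,d$ is divisible by all six primes, giving a $K_6\supseteq K_4$ in $\Delta(G)$ — impossible. If $N=J_1$, then since $\mathrm{Mult}(J_1)=1$ the character $\theta$ extends to $G$, so by Gallagher $d\cdot\mathrm{cd}(J_1)\subseteq\mathrm{cd}(G)$; picking a prime $p\mid d$ and, for each $q\in\rho(J_1)\setminus\{p\}$, a degree $m\in\mathrm{cd}(J_1)$ with $q\mid m$, we get $p\sim q$ for all $q$, i.e. $p$ is adjacent to every other vertex of $\Delta(G)$ — impossible, since $\mathrm{diam}(\Delta(G))=3$. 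Otherwise $1<N<J_1$, so $N$ lies in some maximal subgroup $M$ of $J_1$. If $M$ is one of the four "large‑index" classes, then $[J_1:N]$ (a multiple of $[J_1:M]$) already has four prime divisors, so $\chi(1)$ yields a $K_4$ — impossible. So $M\in\{\mathrm{PSL}_2(11),\,2^3{:}7{:}3,\,2\times A_5\}$; in each of these last three cases a short computation (using $\mathrm{cd}(M)$, and $\mathrm{cd}$ of the relevant double cover when $\theta$ fails to extend) together with the constraint that $[J_1:N]\psi(1)$ must not introduce a fourth prime pins $N$ down to a very short list and exhibits some $\chi(1)\in\mathrm{cd}(G)$ divisible by one of the pairs $\{2,11\},\{3,7\},\{5,7\}$, each of which is a non‑edge of $\Delta(G)\cong\Delta(J_1)$ — contradiction. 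Hence $R$ is abelian.

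\emph{Step 2: $G\cong J_1\times R$.} We may assume $R\ne1$ (otherwise $G\cong J_1$). Since $C_G(R)\unlhd G$ and $C_G(R)/R\unlhd G/R\cong J_1$, either $C_G(R)=R$ or $C_G(R)=G$. If $C_G(R)=R$, then $J_1$ acts faithfully on $R$, hence faithfully on $\mathrm{Irr}(R)$; but the inertia‑subgroup analysis of Step 1, applied now to a linear $\theta\in\mathrm{Irr}(R)$, forces $N=I_G(\theta)/R=J_1$ in every case (the options $N=1$ and $1<N<J_1$ again produce a $K_4$ or a forbidden edge), so every $\theta\in\mathrm{Irr}(R)$ is $G$‑invariant and $J_1$ acts trivially on $\mathrm{Irr}(R)$ — contradicting faithfulness. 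Therefore $C_G(R)=G$, i.e. $R\le Z(G)$; combined with $Z(G)\le R$ (which always holds, as $Z(G)/R\le Z(G/R)=Z(J_1)=1$), this gives $Z(G)=R$ and $G/Z(G)\cong J_1$. Now choose $K\le G$ minimal subject to $KZ(G)=G$. Since $K/(K\cap Z(G))\cong G/Z(G)\cong J_1$ is perfect, $K'(K\cap Z(G))=K$, whence $K'Z(G)=G$ and minimality gives $K'=K$; thus $K$ is a perfect central extension of $J_1$, and $\mathrm{Mult}(J_1)=1$ forces $K\cap Z(G)=1$ and $K\cong J_1$. Finally $[K,G]=[K,KZ(G)]=[K,K]=K\le K$, so $K\unlhd G$, and $G=K\times Z(G)\cong J_1\times R$, with $R$ abelian, as claimed.

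The main obstacle is the case $1<N<J_1$ in the inertia‑subgroup analysis: ruling out that a $G$‑non‑invariant irreducible character of $R$ could have Clifford‑correspondent degrees that simultaneously avoid a fourth prime and all the forbidden pairs. This is exactly where the detailed Atlas data on the small‑index maximal subgroups of $J_1$ — their orders, their character degrees, and those of their double covers — is needed; everything else in the argument is essentially formal.
\end{proof}
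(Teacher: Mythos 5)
Your proposal is correct in outline but takes a genuinely different route from the paper. The paper first passes to $H$, the last term of the derived series of $G$: since $H/(H\cap R(G))\cong J_1$, it only needs to kill $N:=H\cap R(G)$, and for that it analyses a \emph{linear} character $\lambda$ of $N$ via Clifford theory over the maximal subgroups of $J_1$ (the index test leaves only $\mathrm{PSL}_2(11)$ and $2\times A_5$, which are then excluded using the Schur covers and Gallagher), concluding $N=1$ and hence $G=H\times R(G)$ because $H$ and $R(G)$ are normal with trivial intersection; abelianness of $R(G)$ then falls out at the very end from the observation that a prime of a non-abelian $R(G)$ would have degree five in $\Delta(G)$, impossible since $\Delta(J_1)$ has maximum degree four. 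You instead work with $G$ and $R(G)$ directly: first forcing $R(G)$ abelian by the inertia analysis applied to non-linear $\theta$, then forcing $R(G)\le Z(G)$ by faithfulness on $\mathrm{Irr}(R(G))$, and finally splitting the perfect central extension via the triviality of the Schur multiplier. Both arguments run on the same engine (Clifford theory plus Atlas data on the maximal subgroups of $J_1$ and the degrees of the relevant covers, played against $K_4$-freeness and diameter three); the paper's reduction buys that only linear characters ever need to be treated and no central-extension splitting is required, while your version buys the sharper intermediate facts $R(G)=Z(G)$ and $K\cong J_1$ at the cost of running the inertia analysis twice. The deferred ``short computation'' does go through, but two points deserve care when you write it out: for $M\cong\mathrm{PSL}_2(11)$ the index $2\cdot 7\cdot 19$ is not coprime to $|M|$, so the bare divisibility $\psi(1)/\theta(1)\mid |N|$ does not rule out a $2$-power ratio and you genuinely need (as you flag) that the possible ratios are character degrees of $\mathrm{PSL}_2(11)$ or of $\mathrm{SL}_2(11)$; and the maximal subgroup $2^3{:}7{:}3$ is eliminated by the non-edges $\{5,11\},\{5,19\}$ rather than by the pairs $\{2,11\},\{3,7\},\{5,7\}$ you list -- in all remaining cases the contradiction is a fourth prime, exactly as in the paper's computation.
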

\begin{proof}
Let $H$ be the last term of the derived series of $G$. Since $G$ is non-solvable, $H$ is perfect. Suppose $N:=H\cap R(G)$.  Then as $G/R(G)$ is a non-abelian simple group, $H/N\cong J_1$. Hence $\Delta(H) \cong \Delta(J_1)$. We claim that $N=1$. Suppose on the contrary that $N$ is non-trivial. Since $N$ is solvable, it has a non-trivial linear character $\lambda \in \rm{Irr}(N)$.
 Also as the Schur multiplier of $J_1$ is trivial,  we deduce that $\lambda$ is not $H$-invariant and so the inertia subgroup $I:=I_H(\lambda)$ is a proper subgroup of $H$. Let $M$ be a maximal subgroup of $H$ containing $I$. Then as $\Delta(H)= \Delta(J_1)$, Clifford's Theorem implies that $M/N\cong \rm{PSL}_2(11)$ or $\mathbb{Z}_2\times A_5$. If $I$ is contained in a maximal subgroup $M^\prime$ of $M$, then using Clifford's Theorem, for some $m\in \rm{cd}(H|\lambda)$, $|\pi(m)|\geqslant 4$ and we have a contradiction. Thus $I=M$. If $M/N\cong \rm{PSL}_2(11)$, then as $\rm{SL}_2(11)$ is the Schur representation of $M/N$, $M:=2^2.5.7.19\in \rm{cd}(H|\lambda)$ which is impossible. Hence $M/N\cong \mathbb{Z}_2\times A_5$. If $\lambda$ extends to $I$, then by Gallagher's Theorem, $M:=5.7.11.19\in \rm{cd}(H|\lambda)$. It is a contradiction. Therefore $\lambda$ does not extend to $I$. Let $\varphi\in \rm{Irr}(M|\lambda)$. Then $\varphi(1)\neq 1$ and by Lemma \ref{fraction},  $\varphi(1)$ divides $|M/N|$. Hence as $[H:M]$ and $|M/N|$ are co-prime, Clifford's Theorem implies that $\varphi^H (1)\in \rm{cd}(H|\lambda)$ is divisible by at least four distinct primes and it is a contradiction.
Thus $N=1$ and $G\cong H\times R(G)$. If $R(G)$ is non-abelian and $p \in \rho(R(G))$, then the degree of vertex $p$ in $\Delta(G)$ is five which is impossible. Thus $G\cong J_1\times R(G)$, where $R(G)$ is abelian.
\end{proof}

\begin{lemma}\label{diamtwo}
Let $G$ be a finite group, $ C\lhd G$, $S$ be a non-abelian simple group and for some integer $k\geqslant 1$, $S^k\unlhd G/C\leqslant \rm{Aut}(S^k)$. If $\Delta(G)[\pi(G/C)]\cong K_3$, then $\rm{diam}(\Delta (G))\leqslant 2$.
\end{lemma}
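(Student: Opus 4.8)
The plan is to show $\mathrm{diam}(\Delta(G))\le 2$ by using the triangle $\Delta(G)[\pi(G/C)]$ as a hub. Since $\Delta(G)[\pi(G/C)]\cong K_3$, the set $\rho(G)\cap\pi(G/C)$ consists of exactly three pairwise adjacent primes; as $S^k\unlhd G/C$ and $S$ is non-abelian simple, $\pi(S)\subseteq\rho(G)\cap\pi(G/C)$ and $|\pi(S)|\ge 3$, so $\pi(S)=\rho(G)\cap\pi(G/C)$ has size $3$, and by Lemma~\ref{her} the group $S$ is one of $A_5$, $A_6$, $\mathrm{PSL}_2(7)$, $\mathrm{PSL}_2(8)$, $\mathrm{PSL}_2(17)$, $\mathrm{PSL}_3(3)$, $\mathrm{PSU}_3(3)$, $\mathrm{PSU}_4(2)$. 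I would then reduce the assertion to the claim: every prime $t\in\rho(G)\setminus\pi(S)$ is adjacent in $\Delta(G)$ to at least two of the three primes of $\pi(S)$. Granting this, any two vertices in $\pi(S)$ are adjacent; a vertex in $\pi(S)$ and one outside are linked through a $\pi(S)$-neighbour of the latter; and two vertices outside $\pi(S)$ each have two neighbours in the $3$-set $\pi(S)$, hence a common one. So $\mathrm{diam}(\Delta(G))\le 2$.

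To prove the claim I would push it down twice. Let $M\lhd G$ be the full preimage of $\mathrm{soc}(G/C)=S^k$. Since $C\le M$, $\pi([G:M])\subseteq\pi(G/C)$, so every prime of $[G:M]$ lying in $\rho(G)$ already lies in $\pi(S)$. Moreover, for any $N\lhd G$ one has $\rho(N)\subseteq\rho(G)$ and $\Delta(N)$ is a subgraph of $\Delta(G)$, because $\theta(1)\mid\chi(1)$ whenever $\theta\in\mathrm{Irr}(N)$ and $\chi\in\mathrm{Irr}(G\mid\theta)$ (Lemma~\ref{fraction}). Hence, picking $\chi\in\mathrm{Irr}(G)$ with $t\mid\chi(1)$ and an irreducible constituent $\psi$ of $\chi_M$, the integer $\chi(1)/\psi(1)$ divides $[G:M]$ and is coprime to $t$, so $t\mid\psi(1)$; thus it suffices to prove the claim for $M$ in place of $G$, i.e. for a group $M$ with $C\lhd M$ and $M/C\cong S^k$. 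Running the same argument inside $M$ with $[M:C]=|S|^k$, the prime $t$ must divide $\theta(1)$ for some $\theta\in\mathrm{Irr}(C)$; then $t$ divides every degree in $\mathrm{cd}(M\mid\theta)$, so the task reduces to showing that at least two primes of $\pi(S)$ divide members of $\mathrm{cd}(M\mid\theta)$.

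Here I would analyse $\mathrm{cd}(M\mid\theta)$ through $N:=I_M(\theta)/C\le S^k$. When $k=1$ this is immediate from Lemma~\ref{good}: if $S\notin\{A_5,\mathrm{PSL}_2(8)\}$ some $\chi\in\mathrm{Irr}(M\mid\theta)$ has $\chi(1)/\theta(1)$ divisible by two primes of $\pi(S)$, while for $S\in\{A_5,\mathrm{PSL}_2(8)\}$ one is in the exceptional case, where $\theta$ extends to $M$ and Gallagher's theorem gives $\mathrm{cd}(M\mid\theta)=\theta(1)\,\mathrm{cd}(S)$, divisible by all three primes of $\pi(S)$ (e.g. $\mathrm{cd}(A_5)=\{1,3,4,5\}$ yields adjacency of $t$ to $2,3,5$). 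For general $k$, Clifford theory and character triples give $\mathrm{cd}(M\mid\theta)=[S^k:N]\cdot\theta(1)\cdot\mathcal D$, where $\mathcal D\subseteq\mathrm{cd}(\hat N)$ for a suitable central extension $\hat N$ of $N$ and $1\in\mathcal D$ when $\theta$ extends to $I_M(\theta)$; if two primes of $\pi(S)$ divide $[S^k:N]$ we are done, and otherwise $[S^k:N]$ is $1$ or a prime power $p^a$. If $N=S^k$, then $\mathcal D$ contains the character degrees of a covering group of $S^k$, and a check over the eight possibilities (and their Schur multipliers) shows these involve all three primes of $\pi(S)$ when $\theta$ extends — recall every prime of $\pi(S)$ divides a degree of $S$, hence of $S^k$ — and two of them otherwise, via faithful covering degrees such as $6\in\mathrm{cd}(\mathrm{SL}_2(5))$, $6\in\mathrm{cd}(\mathrm{SL}_2(7))$, $10\in\mathrm{cd}(\mathrm{SL}_2(9))$, $18\in\mathrm{cd}(\mathrm{SL}_2(17))$, a faithful degree of $\mathrm{Sp}_4(3)=2.\mathrm{PSU}_4(2)$ divisible by $2$ and $5$, and the faithful degrees of $3.A_6$ and $6.A_6$. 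If $[S^k:N]=p^a$ with $N\lneq S^k$, the case is vacuous for $S\in\{A_6,\mathrm{PSL}_2(17),\mathrm{PSU}_3(3)\}$, which have no proper subgroup of prime-power index (forcing $N$ subdirect, of non-prime-power index); for the remaining groups one inspects the few subgroups of prime-power index in $S^k$ — built from $A_4<A_5$ (index $5$), $S_4<\mathrm{PSL}_2(7)$ (index $7$), $7{:}3<\mathrm{PSL}_2(7)$ (index $8$), the Borel of $\mathrm{PSL}_2(8)$ (index $9$), the parabolic of $\mathrm{PSL}_3(3)$ (index $13$) and $2^4{:}A_5<\mathrm{PSU}_4(2)$ (index $27$), together with their products with full $S$-factors — and checks that $p^a\cdot\theta(1)\cdot\mathcal D$ always acquires a prime of $\pi(S)$ distinct from $p$ (for instance $10\mid 5\cdot 2\cdot\theta(1)$ through $A_4$, or $63\mid 9\cdot 7\cdot\theta(1)$ through the Borel of $\mathrm{PSL}_2(8)$).

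The part I expect to be the real obstacle is this final case analysis: making the finitely many verifications genuinely exhaustive — in particular the non-split (projective) situations where $\theta$ need not extend, the exceptional covers $2.A_6$, $3.A_6$, $6.A_6$ and $2.\mathrm{PSU}_4(2)\cong\mathrm{Sp}_4(3)$, and all subgroups of prime-power index in the products $S^k$, including subdirect ones — while keeping track that the characters produced genuinely belong to $\mathrm{cd}(M\mid\theta)$ and not merely to the character set of a cover of $N$. The two reductions and the hub argument are routine.
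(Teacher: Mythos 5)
Your overall skeleton is the same as the paper's: reduce to showing that every prime $t\in\rho(G)\setminus\pi(S)$ is adjacent in $\Delta(G)$ to at least two of the three mutually adjacent primes of $\pi(S)$, then conclude by the pigeonhole/hub argument; and your two reductions (first to the socle preimage $M$, then to some $\theta\in\mathrm{Irr}(C)$ with $t\mid\theta(1)$, via Lemma \ref{fraction}) are correct. The problem is the step you yourself flag as the obstacle: for $k\geqslant 2$ you propose to control $\mathrm{cd}(M\mid\theta)$ by a case analysis over the eight groups of Lemma \ref{her}, their Schur covers, and all subgroups of prime-power index in $S^k$ (including subdirect ones), and none of these finitely many verifications is actually carried out. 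As written this is a genuine gap: the assertions ``a check over the eight possibilities shows\dots'' and the treatment of $N\lneq S^k$ of prime-power index are exactly the content that would have to be proved, and they require nontrivial bookkeeping (which projective degrees of which cover actually occur over the fixed $\theta$, why prime-power-index subgroups of $S^k$ have the shape you assume, etc.).

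The missing idea that makes all of this unnecessary is to drop from $M$ (with $M/C\cong S^k$) to the preimage $L\leqslant M$ of a \emph{single} direct factor, so that $L$ is subnormal in $G$ and $L/C\cong S$ is simple. Degrees of subnormal subgroups divide degrees of $G$, so $\Delta(L)$ is a subgraph of $\Delta(G)$, and Lemma \ref{good} applies directly to $C\lhd L$: for $\theta\in\mathrm{Irr}(C)$ with $t\mid\theta(1)$, either some $\chi\in\mathrm{Irr}(L\mid\theta)$ has $\chi(1)/\theta(1)$ divisible by two distinct primes of $\pi(S)$, or $\theta$ extends to $L$ with $S\cong A_5$ or $\mathrm{PSL}_2(8)$, where Gallagher's theorem gives $\theta(1)\,\mathrm{cd}(S)\subseteq\mathrm{cd}(L\mid\theta)$ and hence adjacency of $t$ to all of $\pi(S)$. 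This is exactly the paper's proof: it works uniformly in $k$, needs no classification of $S$ (Lemma \ref{her} is not used), no Schur multipliers, and no analysis of subgroups of $S^k$ — which is precisely the part your proposal leaves open.
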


\begin{proof}
 Let $L\geqslant C$ be a subnormal subgroup of $G$ such that $L/C \cong S$. Clearly, $\Delta(L)$ is a subgraph of $\Delta(G)$. Now let $\pi:=\rho(G)-\rho(G/C)$. Then by Lemma \ref{fraction}, we deduce that $\pi \subseteq \rho(C)$. Hence using Lemma \ref{good}, for every prime $p\in\pi$, there exist two distinct primes  $q_1,q_2\in\pi(L/C)=\pi(S)$ such that $\Delta(G)[\pi(pq_1q_2)]\cong K_3$. Thus as $\rho(G)=\pi(S) \cup \pi$ and $|\pi (S)| = 3$, we can see that $\rm{diam}\Delta(G)\leqslant 2$.
\end{proof}

\begin{lemma}\label{as}
Let $G$ be a finite group such that $|\rho(G)|\neq 5$ and $\Delta(G)$ is a $K_4$-free graph with diameter three. Then $|\rho(G)|=6$ and there exists a normal subgroup $R(G)<M\leqslant G$ so that $G/R(G)$ is an almost simple group with socle $M/R(G)$.
\end{lemma}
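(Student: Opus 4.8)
The plan is to bound $|\rho(G)|$ from above, rule out solvability, and then distil the almost simple structure from the scarcity of $K_4$'s. Since $\rm{diam}(\Delta(G))=3$, the partition $\rho(G)=\rho_1\cup\rho_2\cup\rho_3\cup\rho_4$ recalled in the Introduction applies, with $\rho_1\cup\rho_2$ and $\rho_3\cup\rho_4$ each inducing a complete subgraph of $\Delta(G)$; as $\Delta(G)$ is $K_4$-free neither of these cliques has more than three vertices, so $|\rho(G)|\leqslant 6$, while a geodesic realizing the diameter uses four distinct primes, so $|\rho(G)|\geqslant 4$. With $|\rho(G)|\neq 5$ this leaves $|\rho(G)|\in\{4,6\}$. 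Next I would show $G$ is non-solvable: if it were solvable, then since $\Delta(G)$ is connected of diameter three, \cite{[SDM]} gives $G=PH$ with $P$ a normal non-abelian Sylow $p$-subgroup and $H$ a $p$-complement, and feeding the structure of $\Delta(PH)$ proved there into the $K_4$-freeness and the bound $|\rho(G)|\leqslant 6$ produces a contradiction. Put $R:=R(G)$ and $\bar G:=G/R$; then $\bar G$ is non-solvable with $R(\bar G)=1$, so its socle is a non-trivial direct product of non-abelian simple groups.

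The core step is to prove this socle has a single factor. I will use repeatedly that, for a normal subgroup $N$ of any quotient of $G$, one has $\Delta(N)\subseteq\Delta(G)$: a prime, or a product of two primes, dividing $\theta(1)$ for $\theta\in\rm{Irr}(N)$ also divides $\chi(1)$ for every $\chi\in\rm{Irr}$ lying over $\theta$, and $\rm{cd}$ of a quotient of $G$ is contained in $\rm{cd}(G)$. Suppose the socle of $\bar G$ were not simple; then it has two non-abelian simple direct factors $A,B$ (possibly isomorphic), and since $\rm{cd}(A\times B)=\rm{cd}(A)\rm{cd}(B)$ we get $\Delta(A\times B)\subseteq\Delta(G)$. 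In particular $\Delta(A),\Delta(B)$ are $K_4$-free, so $A,B$ lie in the list of Lemma \ref{free} and $|\rho(A)|,|\rho(B)|\geqslant 3$. A short analysis using the character graphs of the simple groups in Lemma \ref{free} shows that if $|\pi(A)\cup\pi(B)|\geqslant 4$ then $\Delta(A\times B)$ already contains a $K_4$ — a contradiction; hence $\pi(A)=\pi(B)$ is a three-element set, $A,B$ are among the groups of Lemma \ref{her}, and $\Delta(A\times B)$ contains a triangle on $\pi(A)$. Now applying Lemma \ref{trick} to the $\rm{PSL}_2$-type factors among $A,B$ (and arguing directly, via Lemma \ref{good}, for the possibilities $\rm{PSL}_3(3),\rm{PSU}_3(3),\rm{PSU}_4(2)$), either some prime of $\rho(G)$ outside $\pi(A)$ is joined to all of the triangle $\pi(A)$, yielding a $K_4$, or $\rho(\bar G)=\pi(A)$, in which case an examination of the characters of $R(G)$ in the spirit of the proof of Lemma \ref{power} (via Lemmas \ref{irreducible} and \ref{module}) produces a degree with at least four prime divisors. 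Either way the hypotheses are contradicted, so the socle of $\bar G$ is a single non-abelian simple group $S$. Taking $M$ to be the preimage of $S$ in $G$ gives $R(G)<M\unlhd G$ with $M/R\cong S$; and $C:=C_{\bar G}(S)\lhd\bar G$ satisfies $R(C)=1$ and $C\cap S=1$, so $C\neq 1$ would furnish a second non-abelian minimal normal subgroup of $\bar G$. Hence $C=1$ and $G/R(G)$ is almost simple with socle $M/R(G)$.

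Finally I would rule out $|\rho(G)|=4$; in that case $\Delta(G)$ is the path on four vertices, which (being a path) is a disjoint union of paths. Since $\rho(S)\subseteq\rho(G)$ and $\Delta(S)\subseteq\Delta(G)$, with $|\pi(S)|\in\{3,4\}$, running through the possibilities — Lemma \ref{her} when $|\pi(S)|=3$, and Lemma \ref{chpsl} together with the Clifford-theoretic results of Section 3 (such as Lemma \ref{otf}) when $S\cong\rm{PSL}_2(q)$ — one finds that the edges forced by $\Delta(S)$ and by the action of $G/M$ can never be assembled into a path of length three on four primes: for most $S$ the graph $\Delta(S)$ already fails to embed in a path, and for the surviving $\rm{PSL}_2(q)$'s the extra characters over $R(G)$ force either a triangle or a vertex of degree three. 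This leaves $|\rho(G)|=6$, as claimed. I expect the socle reduction to be the main obstacle: because $\Delta(G)$ is only assumed $K_4$-free — not $K_3$-free — and may have as few as six vertices, products such as $A_5\times A_5$ or $\rm{PSL}_3(3)\times\rm{PSL}_3(3)$ are not excluded by their own character graphs, so one genuinely needs the action of $\bar G$ on the socle to force a $K_4$ or to collapse the diameter; disposing of the four-vertex case is the other point that demands a careful, if routine, case analysis.
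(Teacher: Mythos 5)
Your reduction via $\Delta(A\times B)$ (the join computation forcing either a $K_4$ or $\rho(A)=\rho(B)=F$ with $|F|=3$) coincides with what the paper does, but your way of closing that last case has a genuine gap. First, Lemma \ref{trick} cannot be applied to the simple factors $A,B$: the lemma requires \emph{normal} subgroups of the ambient group isomorphic to $\rm{PSL}_2(u^\alpha)$ or $\rm{SL}_2(u^\alpha)$, whereas $A$ and $B$ are in general only subnormal in $\bar G$ --- exactly in the situation where they lie in a single minimal normal subgroup $S^k$, $k\geqslant 2$, and are permuted by $\bar G$ --- and in any case they are subgroups of $G/R(G)$, not of $G$, so the lemma says nothing about the primes coming from $\rm{Irr}(R(G))$. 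The paper treats the non-simple chief factor by the Main Theorem of \cite{[LMc]}, which gives that $\Delta(G/C)$ is complete on at least three vertices, and this is the ingredient your sketch lacks. Second, in your branch $\rho(\bar G)=\pi(A)=F$ you assert that the characters of $R(G)$ produce a degree with at least four prime divisors ``in the spirit of Lemma \ref{power}''; but the module machinery behind that lemma (Lemmas \ref{tensor}, \ref{center}, \ref{irreducible}, \ref{module}) is tailored to perfect groups with quotient $\rm{PSL}_2(2^f)$ and at least five primes, and there is no reason an arbitrary three-prime socle ($A_5$, $\rm{PSL}_2(7)$, $\rm{PSL}_3(3)$, $\rm{PSU}_4(2)$, $\dots$) acting on $\rm{Irr}(R(G))$ yields such a degree. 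The contradiction to aim for in this branch is with the \emph{diameter}, not with $K_4$-freeness: once one knows $\pi(G/C)=F$ (via Lemma \ref{her} when the socle is simple, via \cite{[LMc]} when it is $S^k$), Lemma \ref{good} shows every prime of $\rho(G)$ outside the triangle is adjacent to at least two of its vertices, which is exactly Lemma \ref{diamtwo} and gives $\rm{diam}(\Delta(G))\leqslant 2$. You invoke Lemma \ref{good} only incidentally and never establish $\pi(G/C)=F$, so the case $\rho(\bar G)=F$ is not closed as written.

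Two further steps are asserted rather than proved. For non-solvability, quoting the $G=PH$ structure from \cite{[SDM]} and saying it ``produces a contradiction'' is not an argument; what is actually needed (and what the paper gets from Theorem 4 of \cite{[A]}) is the quantitative fact that a solvable group with diameter-three character graph forces a clique of size at least four, which does not follow from the normal Sylow structure alone as you state it. For the exclusion of $|\rho(G)|=4$: a graph on four vertices with diameter three is precisely the path on four vertices, so Lemma \ref{path} disposes of this case immediately; your proposed case analysis through Lemmas \ref{her}, \ref{chpsl} and \ref{otf} is both unnecessary and left entirely unexecuted.
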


\begin{proof}
Since $\Delta(G)$ is $K_4$-free, Lemmas \ref{part} and \ref{path} imply that $|\rho(G)|= 6$. By Theorem 4 of \cite{[A]}, $G$ is non-solvable.  Let $M/R(G)$ be a chief factor of $G$. Since $R(G)$ is the largest normal solvable subgroup of $G$, $M/R(G)$ must be non-abelian and thus $M/R(G)\cong S^k$, for some non-abelian simple group $S$ and some integer $k\geqslant 1$. We show that $k=1$. On the contrary, assume that $k>1$. Let $C/R(G)=C_{G/R(G)}(M/R(G))$. Then $S^k\cong MC/C\lhd G/C\leqslant \rm{Aut}(MC/C)$ and so $G/C$ satisfies the hypothesis of Main Theorem of \cite{[LMc]}. Therefore $\Delta(G/C)$ is a complete subgraph of $\Delta(G)$ with at least $3$ vertices. Since $\Delta(G)$ is $K_4$-free, we obtain $|\rho (G/C)|=3$. Therefore $\Delta(G/C)$ is a triangle. Hence using Lemma \ref{diamtwo}, $\rm{diam}(\Delta(G))\leqslant 2$ which is a contradiction. Thus $M/R(G)\cong S$ is a non-abelian simple group.

 Let $C/R(G)=C_{G/R(G)}(M/R(G))$. We claim that $C=R(G)$ and $G/R(G)$ is an almost simple group with socle $M/R(G)$. Suppose on the contrary that $C\neq R(G)$ and let $L/R(G)$ be a chief factor of $G$ with $L\leqslant C$. Similar to the above paragraph again, $L/R(G)\cong T$, for some non-abelian simple group $T$. Since $L\leqslant C$, $LM/R(G)\cong L/R(G)\times M/R(G) \cong S\times T$. Let $F:=\pi (S)\cap \pi(T)$ and $|F|=n$. Then $\Delta(S\times T) \cong K_n \ast \Delta(S)[\rho(S)-F]\ast \Delta(T)[\rho(T)-F]$. Since $\Delta(G)$ is a $K_4$-free graph, $\Delta(S)$ and $\Delta(T)$ are too and $|F|\leqslant 3$. Thus using Lemmas \ref{free} and \ref{chpsl}, we can see that $2,3\in F$, unless when one of the groups $S$ or $T$ is isomorphic to $^2B_2(8)$ or $^2B_2(32)$. If $|F|=1$ or $2$, then $\Delta(S \times T)$ contains a copy of $K_4$ which is a contradiction. Hence $|F|=3$. Thus as $\Delta(G)$ is $K_4$-free, $\rho(S)=\rho(T)=F$. Hence $\Delta(S\times T)$ is a triangle. By Lemma \ref{her}, we can see that $\pi(S)=\pi(\rm{Aut}(S))$. Thus as $S\leqslant G/C\leqslant \rm{Aut}(S)$, $\Delta(G)[\pi(G/C)]$ is a triangle. Hence by Lemma \ref{diamtwo}, $\rm{diam} \Delta(G)\leqslant 2$ which is impossible.
\end{proof}

 Let $G$ be a finite group, $|\rho(G)|\neq 5$ and $\Delta(G)$ be a $K_4$-free graph with diameter three. By Lemma \ref{as}, $|\rho(G)|=6$ and there exists a normal subgroup $R(G)<M\leqslant G$ so that $G/R(G)$ is an almost simple group with socle $S:=M/R(G)$.
 Since $\Delta(G)$ is $K_4$-free, $\Delta(S)$ is too. The structure of $S$ is determined by  Lemma \ref{free}. In the sequel of this paper, we use the notation $\pi_R$ for the set $\rho(R(G))-\pi(G/R(G))$.

\begin{lemma}\label{3p6}
 $|\pi(S)|\neq 3$.
\end{lemma}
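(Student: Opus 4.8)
The plan is to begin with Lemma~\ref{her}. Since $|\pi(S)|=3$ and $\Delta(S)$ is $K_4$-free (being an induced subgraph of $\Delta(G)$), $S$ is isomorphic to one of $A_5$, $A_6$, $\mathrm{PSL}_2(7)$, $\mathrm{PSL}_2(8)$, $\mathrm{PSL}_2(17)$, $\mathrm{PSL}_3(3)$, $\mathrm{PSU}_3(3)$ or $\mathrm{PSU}_4(2)$. For each of these one checks that $\pi(\mathrm{Aut}(S))=\pi(S)$, so $\pi(G/R(G))=\pi(S)$ has exactly three elements; moreover $\rho(S)=\pi(S)$ forces $\rho(G/R(G))=\pi(G/R(G))$, and then Lemma~\ref{fraction} together with Clifford's Theorem gives $\rho(G)=\pi(G/R(G))\cup\pi_R$ as a disjoint union, whence $|\pi_R|=3$ and, in particular, $R(G)\neq 1$ (otherwise $|\rho(G)|=3$).

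I would then dispose of the three non-$\mathrm{PSL}_2$ possibilities at once. For $S\in\{\mathrm{PSL}_3(3),\mathrm{PSU}_3(3),\mathrm{PSU}_4(2)\}$ the character table of $S$ (see the Atlas) shows $\Delta(S)\cong K_3$: for every pair of primes $p,q\in\pi(S)$ there is an irreducible degree divisible by $pq$ (for $\mathrm{PSU}_4(2)$ there is even a degree divisible by the product of all three). Since $\Delta(S)$ is the induced subgraph of $\Delta(G)$ on the vertex set $\pi(S)=\pi(G/R(G))$, we obtain $\Delta(G)[\pi(G/R(G))]\cong K_3$, and Lemma~\ref{diamtwo}, applied with $C=R(G)$ and $k=1$, gives $\mathrm{diam}(\Delta(G))\leqslant 2$, contrary to hypothesis.

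The remaining cases $S\cong\mathrm{PSL}_2(q)$ with $q\in\{5,7,8,9,17\}$ (note $A_5\cong\mathrm{PSL}_2(5)$ and $A_6\cong\mathrm{PSL}_2(9)$) are where the real work lies, since here $\Delta(S)$ carries at most one edge. Using $R(G)\neq 1$, I would pick $r\in\pi_R$ and a $\theta\in\mathrm{Irr}(R(G))$ with $r\mid\theta(1)$, and analyse $\mathrm{cd}(G|\theta)$ through the inertia subgroup $I:=I_G(\theta)$ and the section $N:=I/R(G)$, a subgroup of the almost simple group $G/R(G)$ whose shape is controlled by Dickson's list. Because every quotient $\chi(1)/\theta(1)$ involves only primes of $\pi(G/R(G))=\pi(S)$, any degree in $\mathrm{cd}(G|\theta)$ divisible by two primes of $\pi(S)$ is automatically divisible by $r$ as well; in most configurations of $N$ (for instance $N=S$, or $N$ a Frobenius group with elementary abelian kernel with $\theta$ non-extendible, or $N\cong\mathrm{PGL}_2(q_0)$) one gets a degree divisible by all three primes of $\pi(S)$, hence by four distinct primes, producing a $K_4$ in $\Delta(G)$. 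In the delicate configurations where only two primes of $\pi(S)$ are forced — $N$ contained in a dihedral group, $N$ an abelian or Frobenius section with $\theta$ extendible, or $N\cong A_5$ — I would argue as in the proof of Lemma~\ref{power}: pass to the abelian $2$-part (or odd part) of $R(G)$, show the corresponding module produces a pair satisfying $\mathcal{N}_s$ for a suitable odd prime $s\in\pi(S)$, and invoke Lemmas~\ref{irreducible} and~\ref{module} for a contradiction; if instead the graph stays $K_4$-free throughout, one checks directly that $\Delta(G)[\pi(S)]$ is nevertheless a triangle and concludes with Lemma~\ref{diamtwo}. The main obstacle I anticipate is exactly this last family — in particular $S\cong A_5$ and $S\cong\mathrm{PSL}_2(8)$, where Lemma~\ref{good} permits $\theta$ to extend to $I$, so the ``second prime'' mechanism can break down and one must combine the $\mathcal{N}_s$-obstruction with the very rigid shape of a $K_4$-free graph of diameter three on six vertices.
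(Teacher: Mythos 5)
Your reduction is fine as far as it goes: Lemma~\ref{her} together with $\pi(\mathrm{Aut}(S))=\pi(S)$ gives $|\pi_R|=3$, and your disposal of $\mathrm{PSL}_3(3)$, $\mathrm{PSU}_3(3)$, $\mathrm{PSU}_4(2)$ via $\Delta(S)\cong K_3$ and Lemma~\ref{diamtwo} is correct (if not needed in the paper's argument). But for the surviving cases $S\cong\mathrm{PSL}_2(q)$, $q\in\{5,7,8,9,17\}$, what you offer is a plan with the decisive steps missing, and its main line does not work as stated. Working with a single prime $r\in\pi_R$ and $\theta$ with $r\mid\theta(1)$, Lemma~\ref{good} or Gallagher can only force degrees divisible by $r$ and at most two primes of $\pi(S)$: precisely in these cases $\Delta(S)$ has at most one edge, so every $m\in\mathrm{cd}(S)$ has $|\pi(m)|\leqslant 2$, and your claim that ``most configurations of $N$'' (e.g.\ $N=S$) yield a degree divisible by all three primes of $\pi(S)$ is false; you get a $K_3$, not a $K_4$. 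Your fallback for the ``delicate'' configurations --- an $\mathcal{N}_s$-argument modelled on Lemma~\ref{power}, or showing directly that $\Delta(G)[\pi(S)]\cong K_3$ and invoking Lemma~\ref{diamtwo} --- is not carried out, is far from routine (Lemma~\ref{power} is proved for $q=2^f$, $f\geqslant 4$, $G$ perfect, and its module-theoretic input such as Lemma~\ref{center} is not available off the shelf here), and you yourself flag $A_5$ and $\mathrm{PSL}_2(8)$ as unresolved. So the hard half of the lemma is left open.

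The missing idea is the paper's use of Lemma~\ref{part}: since $G$ is non-solvable with $\mathrm{diam}(\Delta(G))=3$, the complement of $\Delta(G)$ is bipartite, so the three primes of $\pi_R$ cannot be pairwise non-adjacent --- there are $p_1,p_2\in\pi_R$ adjacent in $\Delta(G)$, hence (by Lemma~\ref{fraction}) adjacent already in $\Delta(R(G))$, i.e.\ some $\theta\in\mathrm{Irr}(R(G))$ has $p_1p_2\mid\theta(1)$ --- and likewise $\Delta(G)[\pi(S)]$ is non-empty. Feeding this $\theta$ into Lemma~\ref{good} finishes in both branches: either some $\chi\in\mathrm{Irr}(M|\theta)$ has $\chi(1)/\theta(1)$ divisible by two primes of $\pi(S)$, so $\pi(\chi(1))$ already contains a $K_4$; or $\theta$ extends and $S\cong A_5$ or $\mathrm{PSL}_2(8)$, in which case Gallagher makes both $p_1$ and $p_2$ adjacent to every prime of $\pi(S)$, and these edges together with the edge $p_1p_2$ and the edge inside $\pi(S)$ form a $K_4$. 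Using two adjacent primes of $\pi_R$ rather than one is exactly what turns the ``only two primes of $\pi(S)$'' mechanism into a contradiction, uniformly and without any case analysis on $S$ or on the inertia quotient $N$.
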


\begin{proof}
On the contrary, suppose that $|\pi(S)|=3$. Using Lemma \ref{her}, we can see that $\pi(G/R(G))=\pi(S)$. Thus $|\pi_R|=3$. Hence by Lemma \ref{part}, there exist $p_1,p_2\in \pi_R$ so that $p_1$ and $p_2$ are adjacent vertices in $\Delta(R(G))\subseteq \Delta(G)$. Also using Lemma \ref{part}, $\Delta(G)[\pi(S)]$ is a non-empty graph. Hence by Lemma \ref{good}, we deduce that the induced subgraph of $\Delta(G)$ on $\pi(S)\cup \{p_1,p_2\}$ contains a copy of $K_4$ which is impossible.
\end{proof}

\begin{lemma}\label{alter}
 $S\ncong A_8$.
\end{lemma}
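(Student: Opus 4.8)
The plan is to assume $S\cong A_8$ and to contradict either that $\Delta(G)$ is $K_4$-free or that $\mathrm{diam}(\Delta(G))=3$.

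First I would pin down the part of $\Delta(G)$ that $A_8$ forces. From $\mathrm{cd}(A_8)=\{1,7,14,20,21,28,35,45,56,64,70\}$ one reads off that $\Delta(A_8)$ has vertex set $\{2,3,5,7\}$ and contains every edge except $\{2,3\}$; note that $70=2\cdot5\cdot7\in\mathrm{cd}(A_8)$ while no element of $\mathrm{cd}(A_8)$ is a multiple of $6$. Since $S=M/R(G)$ is a quotient of $M$ and $M\lhd G$, $\Delta(S)$ is a subgraph of $\Delta(M)$, which is a subgraph of $\Delta(G)$; hence in $\Delta(G)$ each of $5,7$ is adjacent to $2$, to $3$ and to the other, so $K_4$-freeness gives $2\not\sim3$, i.e.\ no character degree of $G$ is divisible by $6$. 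Because $\mathrm{Aut}(A_8)\cong S_8$ we also get $\pi(G/R(G))=\{2,3,5,7\}$, and as $|\rho(G)|=6$ this forces $R(G)\neq1$ and the existence of two distinct primes $p,p'\in\rho(R(G))\setminus\{2,3,5,7\}$; thus $\rho(G)=\{2,3,5,7,p,p'\}$.

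Next I would locate $p$ and $p'$. If $pp'\mid\psi(1)$ for some $\psi\in\mathrm{Irr}(G)$ then, $pp'$ being coprime to $[G:R(G)]$, Lemma~\ref{fraction} gives $pp'\mid\varphi(1)$ for a constituent $\varphi$ of $\psi|_{R(G)}$, and Lemma~\ref{good} applied to $M/R(G)\cong A_8$ (which is neither $A_5$ nor $\mathrm{PSL}_2(8)$) produces $\chi\in\mathrm{Irr}(M\mid\varphi)$ with $\chi(1)/\varphi(1)$ divisible by two distinct primes of $\{2,3,5,7\}$; then $\chi(1)$, hence some degree of $G$, has four distinct prime divisors, contradicting $K_4$-freeness. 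So $p\not\sim p'$. Now Lemma~\ref{part} writes $\rho(G)=X\sqcup Y$ with $X,Y$ cliques, and $K_4$-freeness forces $|X|=|Y|=3$. Since $2\not\sim3$ and $p\not\sim p'$, the pairs $\{2,3\}$ and $\{p,p'\}$ are each split between $X$ and $Y$, so the triangle containing $p$ has the form $\{p,a,b\}$ with $\{a,b\}\subseteq\{2,3,5,7\}$ an edge of $\Delta(G)$, whence $\{a,b\}\neq\{2,3\}$; similarly for $p'$. The only partition of $\{2,3,5,7\}$ into two such pairs is (up to interchanging $5$ and $7$) into $\{2,5\}$ and $\{3,7\}$, so $X=\{2,5,p\}$ and $Y=\{3,7,p'\}$. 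In particular $p\sim2$, $p\sim5$, and since $\{2,5,7\}$ is already a triangle, $p\not\sim7$.

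Finally I would destroy this configuration. Fix $\theta\in\mathrm{Irr}(R(G))$ with $p\mid\theta(1)$, put $I:=I_M(\theta)$ and $\bar I:=I/R(G)\le A_8$; since $R(G)\le I$, the Clifford correspondence yields $\psi^M\in\mathrm{Irr}(M)$ with $\psi^M(1)$ a multiple of $[A_8:\bar I]\,\theta(1)$, hence of $[A_8:\bar I]\,p$, and $\Delta(M)\subseteq\Delta(G)$. If $6\mid[A_8:\bar I]$ this gives a degree of $G$ divisible by $6$, contrary to Step 1. Otherwise $\bar I\not\le N_{A_8}(\langle x\rangle)\cong 7{:}3$ for $x$ of order $7$ (that subgroup has index $960$, a multiple of $6$), so if $7\mid|\bar I|$ its Sylow $7$-subgroup — cyclic, whence $7\nmid|\mathrm{Mult}(\bar I)|$ — is not normal, and Ito's theorem (in its projective form) produces a member of $\mathrm{cd}(I\mid\theta)$ divisible by $7$; inducing to $M$ gives a degree of $G$ divisible by $7p$, i.e.\ $7\sim p$, contrary to Step 2; and if $7\nmid|\bar I|$ then $7\mid[A_8:\bar I]$, so again $7\sim p$. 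The only cases requiring an explicit check are $\bar I\in\{A_7,\,2^3{:}\mathrm{PSL}_3(2)\}$ (indices $8$ and $15$) and $\bar I=A_8$: using the ordinary and projective character degrees of $A_7$, $\mathrm{PSL}_3(2)$, $2^3{:}\mathrm{PSL}_3(2)$, and of $2.A_8$ (note $24\in\mathrm{cd}(2.A_8)$ and $70\in\mathrm{cd}(A_8)$), one sees $\mathrm{cd}(I\mid\theta)$ always has a member divisible by $3$ or by $7$, again forcing a degree of $G$ divisible by $6$ or by $7p$. Each case contradicts Step 1 or Step 2, so $S\ncong A_8$.

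I expect the main obstacle to be the Clifford bookkeeping in the last step when $\theta$ does not extend to $I$, where the pertinent degrees are those of faithful characters of a Schur cover of $\bar I$; this rests on the conjugacy classification of the subgroups of $A_8$ of order divisible by $7$ together with the projective character theory of the handful of small groups occurring. A cleaner alternative worth trying first is to prove $H\cap R(G)=1$, where $H$ is the last term of the derived series of $M$: if $H\cap R(G)\neq1$, a non-trivial linear character of it cannot extend to the perfect group $H$, so one obtains faithful $2.A_8$-degrees and hence the forbidden edge $\{2,3\}$, while the non-invariant case lands in a maximal subgroup of $A_8$ and is handled as in the proof of Lemma~\ref{gej}. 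Then $H\cong A_8\lhd G$; setting $C:=C_G(H)\supseteq R(G)$ one has $HC=H\times C\lhd G$, and the degree-$70$ character of $H$ times a character of $C$ of degree divisible by $p$ gives a degree of $G$ divisible by $\{2,5,7,p\}$, a $K_4$.
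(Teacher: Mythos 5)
Your overall strategy is essentially the paper's: reduce to two external primes $p,p'$, fix $\theta\in\mathrm{Irr}(R(G))$ with $p\mid\theta(1)$, and run Clifford theory through the subgroups of $A_8$, forcing either a degree divisible by $6$ (which makes $\{2,3,5,7\}$ a $K_4$) or a degree divisible by $7p$; the paper's endgame is to get $7$ adjacent to both external primes and conclude $\mathrm{diam}(\Delta(G))\leqslant 2$, whereas you contradict a non-adjacency $p\not\sim 7$ read off from Lemma \ref{part}. A small slip there: the admissible splittings of $\{2,3,5,7\}$ are $\{2,5\},\{3,7\}$ and $\{2,7\},\{3,5\}$, and ``interchanging $5$ and $7$'' is not a symmetry of your later, $7$-specific argument; what you may assume without loss is that the external prime you work with is the one not adjacent to $7$ (such a prime exists, since otherwise $7$ would be adjacent to every vertex and $\mathrm{diam}(\Delta(G))\leqslant 2$). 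This is cosmetic and easily repaired.

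The genuine gaps are in your last step. First, the ``projective form of Ito's theorem'' you invoke --- a non-normal Sylow $7$-subgroup of $I/R(G)$ forces some member of $\mathrm{cd}(I|\theta)$ to be divisible by $7$ --- is false in general: take $I=\mathrm{SL}_2(3)$, the central subgroup $Z(I)$ in the role of $R(G)$, $\theta$ its faithful linear character and the prime $3$; then $\mathrm{cd}(I|\theta)=\{2\}$ although $I/Z(I)\cong A_4$ has no normal Sylow $3$-subgroup. Fortunately the scope of that principle collapses, since every subgroup of $A_8$ other than $A_8$, $A_7$ and $2^3{:}\mathrm{PSL}_3(2)$ has index divisible by $6$ or by $7$; but this means the whole burden falls on the ``explicit check'' of these three inertia quotients, which you merely assert --- and this check is exactly where the paper's proof does its real work. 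Moreover your stated criterion (a member of $\mathrm{cd}(I|\theta)$ divisible by $3$ or by $7$) is not sufficient when $I/R(G)\cong 2^3{:}\mathrm{PSL}_3(2)$: the index is $15$, so a relative degree divisible by $3$ but odd and prime to $7$ only yields the triangle $\{3,5,p\}$ and no contradiction. What must be proved there is that $\mathrm{cd}(I|\theta)$ always contains a member divisible by $2$ or by $7$, and the paper needs a second layer of Clifford theory for this (passing through the normal subgroup covering $2^3$ and using the cover $\mathrm{SL}_2(7)$ of $\mathrm{PSL}_3(2)$); the cases $A_7$ (whose Schur multiplier is $C_6$) and $A_8$ likewise require the actual projective degrees ($36\in\mathrm{cd}(2.A_7)$, $24\in\mathrm{cd}(2.A_8)$, etc.). Until that analysis is supplied the proof is incomplete, and your ``cleaner alternative'' via $H\cap R(G)=1$ defers to the same unproved case analysis.
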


\begin{proof}
On the contrary, suppose that $S\cong A_8$. Since $\Delta(G)$ is $K_4$-free, we can see that $G/R(G)=S$. Thus as $|\rho(G)|= 6$, $|\pi_R|= 2$ and for some distinct primes $p$ and $q$, $\pi_R=\{p,q\}$. Now let $b\in \pi_R$. There exists $\theta_b\in \rm{Irr}(R(G))$ so that $b$ divides $\theta_b(1)$. Suppose $I_b:=I_G(\theta_b)$ and $N_b:=I_b/R(G)$. If $N_b=S$, then looking at the character table of the Schur representation of $S$, we deduce that $70 \theta_b(1)$ or $24 \theta_b(1)$ is a character degree in $\rm{cd}(G|\theta_b)$. Hence $\Delta(G)[\pi(70b)]$ or $\Delta(G)[\pi(S)]$ is a copy of $K_4$, respectively. It is a contradiction. Thus $N_b<S$ is contained in a maximal subgroup $M_b$ of $S$. Hence one of the following cases occurs: \\
Case 1. $[S:M_b]=8$. Then $M_b\cong A_7$. Let $ N_b=M_b\cong A_7$.  Looking at the character table of the Schur representation $\Gamma$ of $A_7$, there exists $m\in \rm{cd}(G|\theta_b)$ such that $m$ is divisible by $6$. Hence the induced subgraph of $\Delta(G)[\pi(S)]\subseteq \Delta(G)$ is a copy of $K_4$ and  it is a contradiction. Thus $N_b$ is a proper subgroup of $M_b$. There exists a maximal subgroup	$L_b$ of $M_b$ containing $N_b$. Now one of the following subcases occurs:\\
a) $[M_b:L_b]=7$. Then using Clifford's Theorem, all character degrees in $\rm{cd}(G|\theta_b)$ are divisible by $7b$. \\
b) $[M_b:L_b]\in \{15,21,35\}$. Then by Clifford's Theorem, there exists $m \in \rm{cd}(G|\theta_b)$ so that $|\pi(m)|\geqslant 4$. It is again a contradiction. \\
Case 2. $[S:M_b]=15$. Then $M_b\cong D\rtimes \rm{PSL}_3(2)$, where $D$ is a 2-subgroup of $M_b$ of order $8$. If $D\nsubseteq N_b$, then using Clifford's Theorem, $2.3.5.b$ divides some character degree in $\rm{cd}(G|\theta_b)$ which is a contradiction. Therefore $D\subseteq N_b$. Now one of the following subcases occurs:\\
a) $N_b/D$ is isomorphic to a proper subgroup of $\rm{PSL}_3(2)$. The index for a maximal subgroup of $\rm{PSL}_3(2)$ is $7$ or $8$. Hence using Clifford's Theorem, there exists $m \in \rm{cd}(G|\theta_b)$ such that $m$ is divisible by either $2.3.5.b$ or $3.5.7.b$. It is a contradiction.\\
b) $N_b/D\cong \rm{PSL}_3(2)$. Then $N_b=M_b$. There exists a normal subgroup $E$ of $I_b$ so that $E/R(G)=D$. Let $\varphi \in \rm{Irr}(E|\theta_b)$, $I_\varphi:=I_{I_b}(\varphi)$ and $N_\varphi:=I_\varphi /E$. If $N_\varphi\cong \rm{PSL}_3(2)$, then $I_\varphi=I_b$ and looking at the character table of the Schur representation of $\rm{PSL}_3(2)$, $8\varphi(1) \in \rm{cd}(I_\varphi|\varphi)$. Therefore using Clifford's Theorem, $m:=2^3.3.5.\varphi(1) \in \rm{cd}(G|\theta_b)$ and it is a contradiction. Hence $N_{\varphi}$ is isomorphic to a proper subgroup of $ \rm{PSL}_3(2)$. Thus by Clifford's Theorem, all character degrees in $\rm{cd}(I_b|\varphi)$ are divisible by $2b$ or $7b$. Therefore by Clifford's Theorem, there exists $d\in \rm{cd}(G|\theta_b)$ such that $d$ is divisible by either $2.3.5.b$ or $3.5.7.b$. It is again a contradiction.\\
Case 3. $[S:M_b]=28,35$ or $56$. Then using Clifford's Theorem, all character degrees in $\rm{cd}(G|\theta_b)$ are divisible by $7b$.\\
Therefore as $\pi_R=\{p,q\}$, by above cases, $7$ is adjacent to both $p$ and $q$. Hence $\rm{diam}(\Delta(G))\leqslant 2$ which is a contradiction.
\end{proof}

\begin{lemma}\label{mat}
$S\ncong M_{11}$.
\end{lemma}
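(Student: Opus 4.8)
The plan is to show that $S \cong M_{11}$ would force $\Delta(G)$ to contain a $K_4$, contradicting the standing hypothesis. Since $\mathrm{Out}(M_{11})$ is trivial, $G/R(G)\cong M_{11}$, so $\pi(G/R(G))=\pi(M_{11})=\{2,3,5,11\}$ and, as $|\rho(G)|=6$, we get $|\pi_R|=2$. I would fix a prime $p\in\pi_R$ and a $\theta\in\mathrm{Irr}(R(G))$ with $p\mid\theta(1)$ (which exists because $p\nmid|M_{11}|$), and set $I:=I_G(\theta)$, $N:=I/R(G)\le M_{11}$. The workhorse is the elementary remark that, by Lemma~\ref{fraction} together with Clifford's theorem, every $\chi\in\mathrm{Irr}(G|\theta)$ satisfies $[M_{11}:N]\mid\chi(1)/\theta(1)$, hence $[M_{11}:N]\cdot p\mid\chi(1)$; so it suffices, in each configuration, to exhibit one $\chi\in\mathrm{Irr}(G|\theta)$ whose degree is divisible by three distinct primes of $\{2,3,5,11\}$ (or by two of them together with an ``internal'' prime), since then $\chi(1)$ has four distinct prime divisors and yields a $K_4$.

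First I would rule out $N=M_{11}$: then $\theta$ is $G$-invariant, and since the Schur multiplier of $M_{11}$ is trivial, $\theta$ extends to $G$, so by Gallagher $\mathrm{cd}(G|\theta)=\{m\theta(1):m\in\mathrm{cd}(M_{11})\}$. Reading $\mathrm{cd}(M_{11})=\{1,10,11,16,44,45,55\}$ off \cite{[At]}, the degrees $10\theta(1),44\theta(1),55\theta(1)$ show $p$ is adjacent to $2$, $5$ and $11$ in $\Delta(G)$; as $\Delta(M_{11})\subseteq\Delta(G)$ already contains the triangle $\{2,5,11\}$ (from the same three degrees), the set $\{2,5,11,p\}$ induces $K_4$. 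Hence $N$ lies in a maximal subgroup $M_p$ of $M_{11}$, and by \cite{[At]} the possibilities for $M_p$ are $M_{10}\cong A_6.2_3$ (index $11$), $\mathrm{PSL}_2(11)$ (index $12$), $M_9{:}2\cong 3^2{:}Q_8.2$ (index $55$), $S_5$ (index $66$) and $2.S_4$ (index $165$).

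The bulk of the argument is then a pass through these cases (and the proper subgroups inside them). For $M_p\in\{S_5,2.S_4\}$ the index alone is divisible by $2\cdot3\cdot11$, resp.\ $3\cdot5\cdot11$, so $[M_{11}:N]\cdot p$ has at least four prime divisors and we are done. For $M_p=M_9{:}2$ the index supplies $\{5,11\}$; since $M_9{:}2$ is a non-abelian $\{2,3\}$-group, $\mathrm{cd}(I|\theta)$ contains some $\psi(1)$ with $\psi(1)/\theta(1)>1$, hence divisible by $2$ or $3$, and $\psi^{G}(1)$ is then divisible by $5,11$, one of $\{2,3\}$ and $p$. For $M_p=\mathrm{PSL}_2(11)$ the index supplies $\{2,3\}$, and using that $\mathrm{cd}(\mathrm{PSL}_2(11))=\{1,5,10,11,12\}$ and that the faithful character degrees of the cover $\mathrm{SL}_2(11)$ are $\{6,10,12\}$, one finds $10\theta(1)\in\mathrm{cd}(I|\theta)$ whether or not $\theta$ extends to $I$; inducing to $G$ gives a degree divisible by $2,3,5,p$. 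For $M_p=M_{10}\cong A_6.2_3$ the index supplies only $\{11\}$, so here I would descend: let $L\unlhd I$ be the preimage of $A_6=M_{10}'$, so $L/R(G)\cong A_6$, apply Lemma~\ref{good} (valid since $A_6\not\cong A_5,\mathrm{PSL}_2(8)$) to obtain $\phi\in\mathrm{Irr}(L|\theta)$ with $\phi(1)/\theta(1)$ divisible by two distinct primes $t_1,t_2\in\pi(A_6)=\{2,3,5\}$, and take $\chi\in\mathrm{Irr}(G|\phi)\subseteq\mathrm{Irr}(G|\theta)$: then $\chi(1)$ is divisible by $t_1,t_2,11,p$. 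Finally, when $N$ is a proper subgroup of one of the $M_p$, the index grows, and a short check — using Dickson's list inside $\mathrm{PSL}_2(11)$ and the subgroup lattices of the others — reduces every case either to a four-prime index count or to one of the situations above; the only slightly delicate point is $N\le\mathbb{Z}_{11}{:}\mathbb{Z}_5\le\mathrm{PSL}_2(11)$, where the degree-$5$ character of this Frobenius group together with the index $144$ again produces a $K_4$. In all cases $\Delta(G)$ contains $K_4$, a contradiction, so $S\ncong M_{11}$.

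The hard part will be the bookkeeping for the subgroup analysis in the last step: one must be sure that for \emph{every} subgroup of $M_{11}$ that can arise as $I_G(\theta)/R(G)$ the corresponding degrees in $\mathrm{cd}(G|\theta)$ are wide enough. The two branches not settled by a bare index count — $M_{10}\cong A_6.2_3$ (passing to the simple section $A_6$ and invoking Lemma~\ref{good}) and $\mathrm{PSL}_2(11)$ (using the character degrees of $\mathrm{SL}_2(11)$) — are where the content lies; everything else reduces to an index computation fed into the divisibility $[M_{11}:N]\cdot p\mid\chi(1)$.
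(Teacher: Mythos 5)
Your proposal is correct in substance and shares the paper's overall skeleton (force $G/R(G)\cong M_{11}$, take $b\in\pi_R$ and $\theta$ with $b\mid\theta(1)$, and run the inertia quotient $N=I_G(\theta)/R(G)$ through the maximal subgroups of $M_{11}$, extracting degrees via Clifford, Gallagher and Schur covers), but it diverges from the paper at two genuine points. First, in the $M_{10}$ branch the paper works with the character table of the Schur cover of $A_6\cong \mathrm{PSL}_2(9)$ to get a degree in $\mathrm{cd}(I_b|\theta_b)$ divisible by $6b$ or $10b$, whereas you invoke Lemma~\ref{good} on the $A_6$-section; your route is less computational and equally valid, with one small repair needed: since $L$ is normal only in $I$, not in $G$, you should not take an arbitrary $\chi\in\mathrm{Irr}(G|\phi)$ but rather some $\psi\in\mathrm{Irr}(I|\phi)\subseteq\mathrm{Irr}(I|\theta)$ and set $\chi=\psi^G$, so that $\chi(1)=[G:I]\psi(1)$ is divisible by $11\,t_1t_2\,p$ — exactly the "work in $I$, then induce" pattern the paper uses. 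Second, and more significantly, in the $\mathrm{PSL}_2(11)$ (index $12$) branch the paper does \emph{not} produce a $K_4$ at all: it only records that $b$ is adjacent to $2$ and $3$, and the final contradiction is a diameter argument (every surviving $b\in\pi_R$ is adjacent to both $2$ and $3$, which together with $\Delta(M_{11})$ forces $\mathrm{diam}\,\Delta(G)\leqslant 2$). You instead force a $K_4$ in this branch too, using $\mathrm{cd}(\mathrm{PSL}_2(11))=\{1,5,10,11,12\}$, the faithful degrees $\{6,10,12\}$ of $\mathrm{SL}_2(11)$ (so $10\theta(1)\in\mathrm{cd}(I|\theta)$ whether or not $\theta$ extends), and Dickson's list for proper $N$, including the genuinely delicate $N\leqslant 11{:}5$ subcase (extension plus Gallagher gives the ratio $5$; non-extension forces a ratio divisible by $5$ or $11$). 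I checked these degree and subgroup claims and they hold, so your argument uses only $K_4$-freeness for the contradiction, not the diameter hypothesis at the end — a slightly stronger conclusion bought at the price of the heavier bookkeeping you acknowledge; the paper's version trades that bookkeeping for the diameter hypothesis, which is available anyway.
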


\begin{proof}
On the contrary, suppose that $S\cong M_{11}$. Evidently, $G/R(G)=S$. Thus as $|\rho(G)|= 6$, $|\pi_R|= 2$ and for some distinct primes $p$ and $q$, $\pi_R=\{p,q\}$. Let $b\in \pi_R$.  There exists $\theta_b\in \rm{Irr}(R(G))$ so that $b$ divides $\theta_b(1)$. Suppose $I_b:=I_G(\theta_b)$ and $N_b:=I_b/R(G)$. If $N_b=S$, then as the Schur multiplier of $S$ is trivial, Gallagher's Theorem implies that $\Delta(G)[\pi(110b)]$ is a copy of $K_4$ and it is a contradiction.
 Thus there exists a maximal subgroup $M_b$ of $S$ such that $N_b$ is contained in $M_b$. Now one of the following cases occurs:\\
Case 1. $[S:M_b]=11$. Then $M_b\cong M_{10}$. We first assume that $N_b\cong M_{10}$ or $A_6$. There exists a normal subgroup $J_b$ of $I_b$ such that $L_b:=J_b/R(G)\cong \rm{PSL}_2(9)$. Note that $\theta_b$ is $J_b$-invariant. Thus looking at the character table of
the Schur representation of $L_b$, we deduce that some $m \in \rm{cd}(I_b|\theta_b)$ is divisible by either $6b$ or $10b$. Therefore using Clifford's Theorem, for some $\chi \in \rm{Irr}(G|\theta_b)$, $\chi(1)$ is divisible by $2.3.11.b$ or $2.5.11.b$ which is a contradiction. Now suppose $N_b$ is isomorphic to a proper subgroup of $A_6$.
      Then using Clifford's Theorem, for some $d\in \rm{cd}(G|\theta_b)$, $d$ is divisible by $2.3.11.b$ or $2.5.11.b$. It is again a contradiction. Therefore there exists a maximal subgroup $U_b\ncong A_6$ of $M_b$ containing $N_b$. The index $[M_b:U_b]$ is one of the values $45,36$ or $10$. Thus using Clifford's Theorem, we again obtain a contradiction.\\
Case 2. $[S:M_b]=12$. Then by Clifford's Theorem, $\Delta(G)[\pi(6b)]\cong K_3$. \\
Case 3. $[S:M_b]=66$ or $165$. Then by Clifford's Theorem, all character degrees in $\rm{cd}(G|\theta_b)$ are divisible by either $2.3.11.b$ or $3.5.11.b$ and we again obtain a contradiction.\\
Case 4. $[S:M_b]=55$. Then $M_b$ is non-abelian and $|M_b|=2^4.3^2$. If $N_b$ is a proper subgroup of $M_b$, then using Clifford's Theorem, some $m\in \rm{cd}(G|\theta_b)$ is divisible by $2.5.11.b$ or $3.5.11.b$ which is impossible. Thus $N_b=M_b$. It is easy to see that for some $m\in \rm{cd}(I_b|\theta_b)$, $m$ is divisible by $2b$ or $3b$ and by Clifford's Theorem, we again obtain a contradiction. \\
Therefore by above cases, for every $b\in \pi_R$, $\Delta(G)[\pi(6b)]\cong K_3$. Hence $\rm{diam}\Delta(G)\leqslant 2$ and it is a contradiction.
\end{proof}

\begin{lemma}\label{simple}
 $S\ncong \rm{PSL}_3(4)$, $\rm{PSL}_3(8)$, $\rm{PSU}_3(4)$, $\rm{PSU}_3(9)$,  $^2B_2(8)$ and $^2B_2(32)$.
\end{lemma}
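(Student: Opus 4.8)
The plan is to eliminate each of the six groups in turn, in every case contradicting $\rm{diam}(\Delta(G))=3$. First I would record the needed data about the socle $S$. For each $S$ on the list one has $|\pi(S)|=4$ and $\rho(S)=\pi(S)$, the graph $\Delta(S)$ is $K_4$-free by Lemma \ref{free}, and --- reading the character degrees of $S$ from the ATLAS, or from the generic degrees of $\rm{PSL}_3(q)$, $\rm{PSU}_3(q)$ and $^2B_2(q^2)$ --- the graph $\Delta(S)$ has a vertex $t$ joined to every other vertex of $\pi(S)$ (for instance $t=5$ when $S\cong\rm{PSL}_3(4)$, and $t=73$ when $S\cong\rm{PSL}_3(8)$ or $\rm{PSU}_3(9)$). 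Moreover $\pi(\rm{Aut}(S))=\pi(S)$; in the single borderline case $S\cong{}^2B_2(8)$, where $\rm{Aut}(S)$ does contain the extra prime $3$, one checks directly that $\Delta({}^2B_2(8).3)$ already contains a $K_4$, so that $G/R(G)={}^2B_2(8)$. Hence $\rho(G/R(G))=\pi(S)$, and since $|\rho(G)|=6$ this forces $|\pi_R|=2$; write $\pi_R=\{p,q\}$ with $p,q\notin\pi(S)$.

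Next, fix a new prime $b\in\{p,q\}$, choose $\theta_b\in\rm{Irr}(R(G))$ with $b\mid\theta_b(1)$, and set $I_b:=I_G(\theta_b)$ and $N_b:=I_b/R(G)$, a subgroup of $G/R(G)$. Suppose first that $N_b$ contains the socle $S$. If $S$ has trivial Schur multiplier --- that is, $S\cong\rm{PSL}_3(8),\rm{PSU}_3(4),\rm{PSU}_3(9)$ or $^2B_2(32)$ --- then by Gallagher's Theorem $\rm{cd}(G|\theta_b)\supseteq\theta_b(1)\,\rm{cd}(G/R(G))\supseteq\theta_b(1)\,\rm{cd}(S)$; in the remaining two cases, $S\cong\rm{PSL}_3(4)$ or $^2B_2(8)$, one instead reads $\rm{cd}(I_b|\theta_b)$ off the character table of the relevant Schur cover. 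In every case two well-chosen members of $\rm{cd}(G|\theta_b)$ are divisible, besides by $b$, by products of two primes of $\pi(S)$ which, together with the edges already present in $\Delta(S)\subseteq\Delta(G)$ and the edges issuing from $b$, force a $K_4$ in $\Delta(G)$ --- a contradiction. (For instance, with $S\cong\rm{PSL}_3(4)$ one has $45=3^2\cdot 5$ and $63=3^2\cdot 7$ in $\rm{cd}(S)$, so $45\,\theta_b(1),\,63\,\theta_b(1)\in\rm{cd}(G)$ make $b$ adjacent to $3,5,7$, and $\{3,5,7\}$ is a triangle of $\Delta(S)$.)

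Suppose now that $N_b$ does not contain $S$; then $N_b$ is contained in a maximal subgroup $M_b$ of $G/R(G)$. By Lemma \ref{fraction} and Clifford's Theorem every degree of $\rm{cd}(G|\theta_b)$ is divisible by $b\cdot[G/R(G):N_b]$, hence by $b\cdot[G/R(G):M_b]$. Running through the short list of maximal subgroups of $G/R(G)$ (from the ATLAS for $\rm{PSL}_3(q)$ and $\rm{PSU}_3(q)$, and from the classification of the subgroups of the Suzuki groups), whenever $[G/R(G):M_b]$ has three distinct prime divisors one already obtains a degree in $\rm{cd}(G|\theta_b)$ with at least four distinct prime divisors, hence a $K_4$ --- a contradiction. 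For the few remaining maximal subgroups, whose index involves only one or two primes --- essentially the parabolic and Singer--normalizer type subgroups, and the small subgroups such as $A_6$ and $2^4{:}A_5$ of $\rm{PSL}_3(4)$ --- I would descend one further level into $M_b$ and apply Clifford's Theorem again, using the explicit character degrees of $M_b$ (e.g. $\rm{cd}(2^4{:}A_5)=\{1,3,4,5,15\}$) and, when $\theta_b$ fails to extend, the projective character degrees of $M_b$ and of its elementary abelian normal $2$-subgroups; in each surviving configuration this again yields a degree with four distinct prime divisors, hence a $K_4$. (Were some configuration to escape this, one would instead find that every degree of $\rm{cd}(G|\theta_b)$ is divisible by $tb$, i.e. $t$ is adjacent to $b$ in $\Delta(G)$; applying this to $b=p$ and to $b=q$ would then make $t$ adjacent to all of $\rho(G)=\pi(S)\cup\{p,q\}$, so $\rm{diam}(\Delta(G))\leqslant 2$, again a contradiction.)

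The step I expect to be the main obstacle is the case where $\theta_b$ does not extend to $I_b$: one must then control the degrees of the faithful projective characters of $N_b$ --- and, after descending, of $M_b$ and of its $2$-subgroups --- relative to the relevant class in the Schur multiplier. This is most delicate when $S\cong\rm{PSL}_3(4)$, whose Schur multiplier is $\mathbb{Z}_3\times\mathbb{Z}_4\times\mathbb{Z}_4$, and when $S\cong{}^2B_2(8)$, whose Schur multiplier is $\mathbb{Z}_2\times\mathbb{Z}_2$, and for the parabolic subgroups, which carry nontrivial Schur multipliers of their own. Apart from this, the argument is a finite --- if lengthy --- verification across six explicit groups and their maximal subgroups.
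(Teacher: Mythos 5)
Your proposal follows essentially the same route as the paper: it reduces to $|\pi_R|=2$, fixes $b\in\pi_R$ with $b\mid\theta_b(1)$, splits according to whether the inertia quotient contains the socle (handled via Gallagher or the Schur-cover character tables) or lies in a maximal subgroup (handled via Clifford's Theorem and the indices of maximal subgroups), and in every case derives a $K_4$ or forces $\rm{diam}(\Delta(G))\leqslant 2$. The only notable difference is that the paper also invokes Lemma \ref{part} (bipartiteness of $\Delta(G)^c$) to close out the residual configurations in the maximal-subgroup case, which is a cleaner way to handle the situations your parenthetical fallback covers somewhat loosely.
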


\begin{proof}
On the contrary, suppose that $S$ is isomorphic to one of the above mentioned simple groups. Since $\Delta(G)$ is $K_4$-free, we can see that $\pi(G/R(G))=\pi(S)$. Thus as $|\rho(G)|= 6$, $|\pi_R|= 2$ and for some distinct primes $p$ and $q$, $\pi_R=\{p,q\}$. By Lemma \ref{good}, we deduce that $p$ and $q$ are non-adjacent vertices in $\Delta(G)$. Let $b\in \pi_R$. There exists $\theta_b\in \rm{Irr}(R(G))$ so that $b$ divides $\theta_b(1)$. Suppose $I_b:=I_M(\theta_b)$ and $N_b:=I_b/R(G)$. Then either $N_b=S$ or $N_b$ is contained in a maximal subgroup of $S$. Hence either $\theta_b$ is $M$-invariant or there exists a maximal subgroup $M_b$ of $S$ such that  every $m \in \rm{cd}(M|\theta_b)$ is divisible by $b[S:M_b]$. If $\theta_b$ is $M$-invariant, then looking at the character table of the Schur representation of $S$, we deduce that $\Delta(G)[\pi(S) \cup\{b\}]$ contains a copy  of $K_4$ which is impossible. Note that by Lemma \ref{part}, $\Delta(G)^c$ is bipartite. Therefore  using Clifford's Theorem and the indices of maximal subgroups of $S$, we deduce that $\rm{diam}(\Delta(G))\leqslant 2$ or $\Delta(G)$ contains a copy of $K_4$ which is impossible.
\end{proof}

\begin{lemma}\label{janco}
If $S\cong J_1$, then for some abelian group $A$, $G\cong J_1 \times A$.
\end{lemma}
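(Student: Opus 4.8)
The plan is to show that $\Delta(G)$ coincides with $\Delta(J_1)$ and then invoke Lemma~\ref{gej}, which upgrades this to $G\cong J_1\times R(G)$ with $R(G)$ abelian, so that one may take $A:=R(G)$. I keep the notation fixed just before Lemma~\ref{3p6}: $G/R(G)$ is almost simple with socle $S\cong J_1$, and since $\mathrm{Out}(J_1)=1$ we in fact have $G/R(G)=S\cong J_1$. From the Atlas, $\mathrm{cd}(J_1)=\{1,56,76,77,120,133,209\}$, so $\rho(J_1)=\pi(J_1)=\{2,3,5,7,11,19\}$; as $|\rho(G)|=6$ this forces $\rho(G)=\pi(J_1)$, hence $\pi_R=\emptyset$, i.e.\ $\rho(R(G))\subseteq\pi(J_1)$. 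Since $\mathrm{cd}(J_1)\subseteq\mathrm{cd}(G)$ we have $\Delta(J_1)\subseteq\Delta(G)$, and reading off the edges from the factorisations $56=2^3\cdot7$, $76=2^2\cdot19$, $77=7\cdot11$, $120=2^3\cdot3\cdot5$, $133=7\cdot19$, $209=11\cdot19$, the non-edges of $\Delta(J_1)$ are exactly the seven ``bad pairs'' $\{2,11\}$, $\{3,7\}$, $\{3,11\}$, $\{3,19\}$, $\{5,7\}$, $\{5,11\}$, $\{5,19\}$. Thus it suffices to show that none of these is an edge of $\Delta(G)$.

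So I would fix a bad pair $e=\{p,q\}$, assume $pq\mid\chi(1)$ for some $\chi\in\mathrm{Irr}(G)$, pick $\theta\in\mathrm{Irr}(R(G))$ lying under $\chi$, and put $I:=I_G(\theta)$, $N:=I/R(G)\le J_1$; by Clifford's Theorem $\chi(1)$ is divisible by $[G:I]$, and $\rho(\theta(1))\subseteq\rho(R(G))\subseteq\pi(J_1)$. The first case is $N\ne J_1$: then $N$ lies in a maximal subgroup $\bar M$ of $J_1$, so $\chi(1)$ is divisible by $[J_1:\bar M]$, which is one of $266=2\cdot7\cdot19$, $1045=5\cdot11\cdot19$, $1463=7\cdot11\cdot19$, $1540=2^2\cdot5\cdot7\cdot11$, $1596=2^2\cdot3\cdot7\cdot19$, $2926=2\cdot7\cdot11\cdot19$, $4180=2^2\cdot5\cdot11\cdot19$. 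If $[J_1:\bar M]\ne1045$, then $\rho(\chi(1))$ contains at least four primes (either $[J_1:\bar M]$ already has four prime divisors, or it has three and one of $p,q$ is not among them), so $\Delta(G)$ contains $K_4$ --- impossible. If $[J_1:\bar M]=1045$, the same holds unless $e\in\{\{5,11\},\{5,19\}\}$; but in those two cases $\{5,11,19\}\subseteq\rho(\chi(1))$, so $\{5,11\}$ is an edge of $\Delta(G)$, whereas $\Delta(J_1)$ together with the single extra edge $\{5,11\}$ has diameter $2$, contradicting $\mathrm{diam}(\Delta(G))=3$.

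The second case is $N=J_1$, i.e.\ $\theta$ is $G$-invariant. Since the Schur multiplier of $J_1$ is trivial, $\theta$ extends to $G$, so Gallagher's Theorem gives $\mathrm{cd}(G|\theta)=\{\theta(1)m:m\in\mathrm{cd}(J_1)\}$. The degree $120\,\theta(1)=2^3\cdot3\cdot5\cdot\theta(1)$ together with $K_4$-freeness forces $\rho(\theta(1))\subseteq\{2,3,5\}$; and if some $r\in\{2,3,5\}$ divided $\theta(1)$, then $77\,\theta(1)$, $133\,\theta(1)$, $209\,\theta(1)$ would exhibit the complete graph on $\{r,7,11,19\}$ inside $\Delta(G)$, which is again impossible. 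Hence $\theta(1)=1$ and $\mathrm{cd}(G|\theta)=\mathrm{cd}(J_1)$, no member of which is divisible by a product of two non-adjacent primes of $\Delta(J_1)$. Both cases being contradictory, no bad pair is an edge of $\Delta(G)$; therefore $\Delta(G)=\Delta(J_1)$, and Lemma~\ref{gej} completes the proof.

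The only genuinely fiddly part is the case $N\ne J_1$: one must combine each of the seven maximal-subgroup indices of $J_1$ with each of the seven bad pairs and observe that exactly the index-$1045$ subgroup (the normaliser of a Sylow $2$-subgroup, of order $168$, whose order avoids $5,11,19$) yields the two borderline pairs $\{5,11\}$ and $\{5,19\}$, which cannot be excluded by $K_4$-freeness alone and must instead be disposed of using $\mathrm{diam}(\Delta(G))=3$ via the diameter-$2$ computation for $\Delta(J_1)+\{5,11\}$.
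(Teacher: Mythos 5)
Your proof is correct and follows essentially the same route as the paper: reduce via Lemma \ref{gej} to showing $\Delta(G)=\Delta(J_1)$, then rule out every potential extra edge by a Clifford analysis over the maximal subgroups of $J_1$ (with the index-$1045$ case killed by the diameter-$3$ hypothesis, which is exactly how the paper handles the adjacency of $a$ and $11$) and by Gallagher's Theorem in the $G$-invariant case. If anything, your invariant case is slightly more careful than the paper's, which asserts a single degree with four prime divisors where one really needs the copy of $K_4$ on $\{r,7,11,19\}$ assembled from several degrees (or $\theta(1)=1$), exactly as you argue.
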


\begin{proof}
By Lemma \ref{gej}, it is enough to show that $\Delta(G)=\Delta(J_1)$. On the contrary, we assume that $\Delta(G)\neq \Delta(J_1)$. Since $\Delta(G)$ is a $K_4$-free graph with diameter $3$, there exists $(a,b)\in \{(3,19), (3,7),(5,19),(5,7)\}$ such that $a$ and $b$ are adjacent vertices in $\Delta(G)$. There exists $\chi \in \rm{Irr}(G)$ so that $ab|\chi(1)$. Now let $\theta$ be a constituent of $\chi_{R(G)}$, $I:=I_G(\theta)$ and $N:=I/R(G)$.
  As $G/R(G)=S$, $N=S$ or $N$ is contained in a maximal subgroup $M$ of $S$. If $N=S$, then as the Schur multiplier of $S$ is trivial, Gallager's Theorem implies that for some $m\in \rm{cd}(G|\theta)$, $|\pi(m)|\geqslant 4$ which is impossible.
   Thus looking at the indices of maximal subgroups of $S$, by Clifford's Theorem, we deduce that either $a$ and $11$ are adjacent vertices in $\Delta(G)$, or  $|\pi(\chi(1))|\geqslant 4$ which is a contradiction.
\end{proof}

In the sequel, it suffices to consider the case $S\cong \rm{PSL}_2(q)$, where for some prime $u$ and positive integer $\alpha$, $q:=u^\alpha\geqslant 11$. Note that the structure of $\Delta(S)$ is determined by  Lemma \ref{chpsl}.

Suppose $q$ is even. Then by Lemma \ref{chpsl}, $\Delta(S)\cong K_1\cup K_1\cup K_2$,  $K_1\cup K_1\cup K_3$, $K_1\cup K_2\cup K_2$ or  $K_1\cup K_2\cup K_3$. Hence as $\rm{dim}(\Delta(G))=3$, using Lemma \ref{direct product}, we should precisely consider the case $\Delta(S)\cong K_1\cup K_1\cup K_2$.

\begin{lemma}\label{one7}
 $\Delta(S)\ncong K_1\cup K_1 \cup K_2$.
\end{lemma}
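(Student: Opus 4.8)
\emph{Proof proposal.} The plan is to assume $\Delta(S)\cong K_1\cup K_1\cup K_2$ and derive a contradiction, by pitting the extremely rigid shape forced on $\Delta(G)$ against the list of possible character degrees of $G$. Since $q$ is even and $q\geqslant 11$, write $q=2^f$ with $f\geqslant 4$; by Lemma \ref{chpsl}(a) the components $\{2\}$, $\pi(q-1)$, $\pi(q+1)$ have orders $1$ and, in some order, $1$ and $2$, so $|\pi(S)|=4$, and by Lemma \ref{as} the set $\rho(G)$ contains exactly two further primes $t_1,t_2$. First I would pin down $f$: applying Lemma \ref{interest} with the sign $\epsilon$ for which $|\pi(q+\epsilon)|=1$ (together with Lemma \ref{evenfive}) leaves only $q=16$, with $q-1=3\cdot 5$ and $q+1=17$, or else $f$ a prime for which $2^f-1$ is a Mersenne prime and $|\pi(2^f+1)|=2$.

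Next I would record the shape of $\Delta(G)$. As $G$ is non-solvable with $\mathrm{diam}(\Delta(G))=3$, Lemma \ref{part} makes $\Delta(G)^c$ bipartite; since $\Delta(G)$ is $K_4$-free and $|\rho(G)|=6$, this forces $\rho(G)=A\sqcup B$ with $\Delta(G)[A]$ and $\Delta(G)[B]$ triangles, no vertex of one part joined to all three vertices of the other, and no two vertices of one part joined to a common pair of the other; in particular the two-element member of $\{\pi(q-1),\pi(q+1)\}$, being a clique of $\Delta(S)\subseteq\Delta(G)$, lies inside $A$ or inside $B$. Moreover, by Lemma \ref{direct product}, if for some $\epsilon\in\{\pm 1\}$ no prime of $\pi(2(q+\epsilon))$ is adjacent in $\Delta(G)$ to any prime of $\pi(q-\epsilon)$, then $\mathrm{diam}(\Delta(G))\leqslant 2$ --- a contradiction --- so I may assume the contrary for both signs.

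The heart of the argument is to control the edges of $\Delta(G)$. The degrees of $G$ over $1_{R(G)}$ are those of the almost simple group $G/R(G)$; by Lemmas \ref{lw} and \ref{cdp} their prime supports lie in $\pi(S)\cup\pi([G/R(G):S])$, and for $q\neq 16$ none of them is divisible by $2$ times an odd prime. For a non-trivial $\theta\in\mathrm{Irr}(R(G))$ whose degree is divisible by one of $t_1,t_2$, I would run through the possibilities for $N:=I_G(\theta)/R(G)$ in Lemma \ref{otf}: using Clifford's theorem, the indices of the subgroups of $\mathrm{PSL}_2(q)$ from Dickson's list, and the $K_4$-freeness of $\Delta(G)$, one finds that $\theta(1)$ must be a $t_i$-power (or such a power times a prime of the $2$-element component), that the ``large'' options for $N$ ($N=S$ with $2\mid\theta(1)$ or with $\theta(1)$ divisible by several primes, $N$ elementary abelian, $N\cong A_5$, $N\cong\mathrm{PSL}_2(2^m)$, and the wide dihedral and Frobenius subgroups) each produce a degree with four distinct prime divisors, and that the surviving possibilities confine $I_G(\theta)/R(G)$ inside the normalizer of a torus on the ``$q+\epsilon_0$'' side. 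This shows that each $t_i$ is adjacent in $\Delta(G)$ only to $2$, to the primes of a fixed component $\pi(q+\epsilon_0)$, and possibly to $t_{3-i}$ --- never to a prime of $\pi(q-\epsilon_0)$.

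Finally I would assemble the contradiction. The clique $\pi(q-\epsilon_0)$ then has all of its outside edges inside $\{2\}\cup\pi(q+\epsilon_0)$, while $t_1,t_2$ reach only $2$, $\pi(q+\epsilon_0)$ and each other; connectedness of $\Delta(G)$ forces an edge between $\pi(q-\epsilon_0)$ and $\pi(2(q+\epsilon_0))$, yet $K_4$-freeness keeps the graph so sparse that any prime of $\pi(q-\epsilon_0)\cup\{2\}$ lies at distance at most $2$, or at least $4$, from any prime of $\pi(q+\epsilon_0)\cup\{t_1,t_2\}$. In every case this is incompatible with $\mathrm{diam}(\Delta(G))=3$ (or produces a $K_4$, or puts us back in the hypothesis of Lemma \ref{direct product}), so $\Delta(S)\ncong K_1\cup K_1\cup K_2$. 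I expect the main obstacle to be the bookkeeping of the third paragraph: one must carry out the full case division of Lemma \ref{otf} for both admissible values of $q$, and the delicate points are the $N=S$ subcase (checking that $\theta(1)q$ creates only a $2$--$t_i$ edge while $\theta(1)(q-\epsilon_0)$ does not already give a $K_4$), the dihedral and Frobenius subcases (showing $[S:N]$ does not drag in a prime of $\pi(q-\epsilon_0)$), and --- only when $q=16$ --- the subcase where $G/R(G)$ properly contains $S$ via a field automorphism of even order, which directly forces a triangle on $\pi(q-1)\cup\{2\}$.
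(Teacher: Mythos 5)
Your opening steps match the paper (Lemma \ref{interest} to force $q=16$ or $q=2^p$ with $2^p-1$ a Mersenne prime, Lemma \ref{part} for bipartiteness of the complement, Lemma \ref{otf} for the inertia-factor analysis, and a finish via Lemma \ref{direct product}), but the heart of your argument, paragraph three, contains a genuine gap. You claim that after the case analysis each extra prime $t_i$ is adjacent in $\Delta(G)$ only to $2$, to the primes of one component $\pi(q+\epsilon_0)$, and possibly to $t_{3-i}$, ``never to a prime of $\pi(q-\epsilon_0)$'', because the surviving possibilities confine $I_G(\theta)/R(G)$ to a torus normalizer on the $q+\epsilon_0$ side. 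This cannot be derived: the case $N=S$ with $\theta(1)$ a $t_i$-power survives your own exclusions (you list it as a ``delicate point'' at the end), and in that case Gallagher's theorem puts $(q-1)\theta(1)$, $q\theta(1)$ and $(q+1)\theta(1)$ in $\mathrm{cd}(G|\theta)$ without creating any $K_4$, so $t_i$ becomes adjacent to \emph{both} primes of the two-element component. Worse, this configuration is unavoidable: the paper first shows (via Lemmas \ref{fraction} and \ref{good}) that $t_1$ and $t_2$ are non-adjacent, and then bipartiteness of $\Delta(G)^c$ forces some $t_i$ to be adjacent to a prime of $\pi(q-\epsilon)$, which in turn forces exactly the $N=S$ case and makes that $t_i$ adjacent to all of $\pi(S)$. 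So your intermediate claim is incompatible with what the hypotheses force, and the subsequent ``assembly'' paragraph, which is only sketched (``distance at most $2$, or at least $4$''), does not repair it. The paper's actual route is different at this point: it treats the two extra primes asymmetrically ($b_1$ adjacent to all of $\pi(S)$; every degree over $\theta_2$ with $b_2\mid\theta_2(1)$ having support exactly $\{2,t,b_2\}$), then proves there is \emph{no} edge between $\pi(2(q+\epsilon))$ and $\pi(q-\epsilon)$, and concludes with Lemma \ref{direct product}.

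A second, smaller gap: you only allow an almost simple overgroup of $S$ when $q=16$, via a field automorphism of even order. This is backwards. For $q=16$ the outer automorphisms have $2$-power order and contribute no new primes, whereas for $q=2^p$ with $p\geqslant 5$ prime the group $G/R(G)$ may equal $\mathrm{Aut}(S)$, which adds the new prime $p$ to $\rho(G)$ (degrees $(q\pm1)p$ by Lemma \ref{cdp}) and leaves only one prime coming from $R(G)$; your analysis, which assumes both extra primes divide degrees of $\mathrm{Irr}(R(G))$, does not cover this. The paper devotes the first part of its proof to eliminating exactly this case, using Fermat's little theorem to see $p\notin\pi(S)$, extendibility of $\theta$, and Gallagher's theorem to manufacture a degree divisible by four primes. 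You would need to add this case to make the reduction to $|\pi_R|=2$ legitimate.
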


\begin{proof}
By Lemmas \ref{chpsl} and \ref{interest}, either $S\cong \rm{PSL}_2(16)$ or $ \rm{PSL}_2(2^p)$, where $p\geqslant 5$ is a prime, $2^p-1$ is a Mersenne prime and $|\pi(2^p+1)|=2$. Thus for some $\epsilon \in \{\pm 1\}$, there exists a prime $t$ such that $\pi(q+\epsilon)=\{t\}$ and $|\pi(q-\epsilon)|=2$. Let $S\cong \rm{PSL}_2(2^P)$ and $G\neq M$. Clearly, $G/R(G)=\rm{Aut}(S)$. Thus as $|\pi(\rm{Aut}(S))|=5$ and $|\rho(G)|=6$, there exists a prime $b$ such that $\pi_R= \{b\}$. Thus for some $\theta \in \rm{Irr}(R(G))$, $b|\theta(1)$. Let $I:=I_M(\theta)$ and $N:=I/R(G)$. Since $\Delta(G)$ is $K_4$-free with diameter three, by Lemma \ref{otf}, we deduce that either $N=S$ and $\rm{cd}(M|\theta)=\{m\theta(1)|\,m\in \rm{cd}(S)\}$, or $N$ is a dihedral group of order $2(2^p+1)$ and for every $m\in \rm{cd}(M|\theta)$, $\pi(m)=\{2,t,b\}$. Using Fermat's Lemma, $p\notin \pi(S)$ and so $I_G(\theta)/R(G)=N$ or $N\rtimes \mathbb{Z}_p$. Thus the Schur multiplier of $I_G(\theta)/R(G)$ is trivial and $\theta$ is extendible to $I_G(\theta)$. If $N=S$, then using Lemma \ref{cdp} and Gallagher's Theorem, $m:=\theta(1) p(2^p+1)\in \rm{cd}(G|\theta)$ and $\Delta(G)[\pi(m)]$ contains a copy of $K_4$ which is impossible. Also if $N$ is a dihedral group, then using Clifford's and Gallagher's Theorems, some character degree in $\rm{cd}(G|\theta)$ is divisible by $2tbp$ and it is a contradiction. Hence $|\pi_R|=2$ and for some distinct primes $b_1$ and $b_2$, $\pi_R=\{b_1,b_2\}$. Let $b_1$ and $b_2$ be adjacent vertices in $\Delta(G)$. There exists $\chi \in \rm{Irr}(G)$ such that $b_1b_2$ divides $\chi_1$. Now assume that $\theta\in \rm{Irr}(R(G))$ is a constituent of $\chi_{R(G)}$. By Lemma \ref{fraction}, $b_1b_2$ divides $\theta(1)$. Hence using Lemma \ref{good}, for some $m\in \rm{cd}(M|\theta)$, $|\pi(m)|\geqslant 4$ which is a contradiction. Thus $b_1$ and $b_2$ are non-adjacent vertices in $\Delta(G)$.
 Hence using Lemma \ref{part}, there exist $x\in \pi_R$ and $y\in \pi (q-\epsilon)$ such that $x$ and $y$ are adjacent vertices in $\Delta(G)$.  Thus  for some $\chi \in \rm{Irr}(G)$, $xy$ divides $\chi(1)$. Now let $\theta \in \rm{Irr}(R(G))$ be a constituent of $\chi_{R(G)}$, $I:=I_M(\theta)$ and $N:=I/R(G)$. Then using Lemma \ref{fraction}, $x$ divides $\theta(1)$. If all character degrees in $\rm{cd}(M|\theta)$ are divisible by $2t\theta(1)$, then $2txy$ divides $\chi(1)$ and it is a contradiction. Thus as $\Delta(G)$ is $K_4$-free,  using Lemma \ref{otf}, $N=S$ and by Gallagher's Theorem, $x$ is adjacent to all vertices in $\pi(S)$. Without loss of generality, we can assume that $x=b_1$. Now we consider the vertex $b_2$.
 There exists $\theta_2\in \rm{Irr}(R(G))$ so that $b_2$ divides $\theta_2(1)$. Let $I_2:=I_M(\theta_2)$ and $N_2:=I_2/R(G)$. Then as $\Delta(G)$ is $K_4$-free with diameter three, by Lemma \ref{otf}, for every $m\in \rm{cd}(M|\theta_2)$, $\pi(m)=\{2,t,b_2\}$.
 We claim that in $\Delta(G)$, no prime in $\pi(2(q+\epsilon))$ is adjacent to any prime in $\pi(q-\epsilon)$. On the contrary, we assume that there exist $x\in \pi(2(q+\epsilon))$ and  $y\in \pi(q-\epsilon)$ such that $x$ and $y$ are adjacent vertices in $\Delta(G)$. Then for some $\chi \in \rm{Irr}(G)$, $xy$ divides $\chi(1)$. Now let $\theta \in \rm{Irr}(R(G))$ be a constituent of $\chi_{R(G)}$, $I:=I_M(\theta)$ and $N:=I/R(G)$. If $N=S$, then by Gallagher's Theorem, $\Delta(G)[\pi(2tyb_1)]$ or $\Delta(G)[\pi(xb_1(q+\epsilon))]$ is a copy of $K_4$ which is impossible. Thus as $\Delta(G)$ is $K_4$-free,  using Lemma \ref{otf}, all character degrees in $\rm{cd}(M|\theta)$ are divisible by $2t$. Hence $2yt$ divides $\chi(1)$ and so the induced subgraph of $\Delta(G)$ on $\{2,t,y,b_1\}$ is a copy of $K_4$ which is  impossible. Therefore in $\Delta(G)$, no prime in $\pi(2(q+\epsilon))$ is adjacent to any prime in $\pi(q-\epsilon)$. Thus using Lemma \ref{direct product}, $\rm{diam}(\Delta(G))\leqslant 2$ and it is a contradiction.
\end{proof}

Finally, we assume that $q$ is odd and try to complete the proof of Main Theorem.

\begin{lemma}\label{six6}
 $\Delta(S)\ncong K_1 \cup ((K_1\cup K_1)\ast K_1)$.
\end{lemma}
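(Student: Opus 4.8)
The plan is to assume $\Delta(S)\cong K_1\cup((K_1\cup K_1)\ast K_1)$ and force $\mathrm{diam}(\Delta(G))\leqslant 2$, a contradiction. Since $(K_1\cup K_1)\ast K_1$ is the path on three vertices, $|\pi(S)|=4$, so neither $q-1$ nor $q+1$ is a $2$-power and Lemma \ref{chpsl}(b)(ii) applies: $q$ is odd, $\pi(q-1)=\{2,r\}$ and $\pi(q+1)=\{2,s\}$ for distinct odd primes $r,s$, and $\pi(S)=\{u,2,r,s\}$ where $u$ is the defining characteristic; in $\Delta(S)$ (hence in $\Delta(G)$) the vertex $u$ is isolated, $2\sim r$, $2\sim s$, $r\not\sim s$. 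Moreover $q\geqslant 11$ and $q\neq 9$, so $\mathrm{Mult}(S)\cong\mathbb{Z}_2$ and $q-1,q+1\in\mathrm{cd}(S)$. As $|\rho(G)|=6$ we may write $\rho(G)-\pi(S)=\{w_1,w_2\}$. Finally, since $\Delta(G)$ is $K_4$-free with diameter $3$ on six vertices, the partition $\rho(G)=\rho_1\cup\rho_2\cup\rho_3\cup\rho_4$ recalled in the introduction forces $|\rho_1\cup\rho_2|=|\rho_3\cup\rho_4|=3$; hence $\Delta(G)$ consists of two vertex-disjoint triangles together with some edges between $\rho_2$ and $\rho_3$, and in particular every vertex, and so $u$, lies in a triangle of $\Delta(G)$.

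The crucial step is to show that each of $w_1,w_2$ is adjacent in $\Delta(G)$ to all of $2,r,s$. Fix $w\in\{w_1,w_2\}$. If $w\notin\pi(G/R(G))$, then $w\in\pi_R$; choose $\theta\in\mathrm{Irr}(R(G))$ with $w\mid\theta(1)$ and apply Lemma \ref{sen} to $M$ (note $R(M)=R(G)$, $M/R(M)\cong S$, $|\pi(S)|=4$, $w\in\rho(M)-\pi(S)$ divides $\theta(1)$, and $\Delta(M)\subseteq\Delta(G)$ is $K_4$-free). Since $\Delta(S)\cong K_1\cup P_3\ncong K_1\cup K_3$, the alternative $I_M(\theta)/R(G)\neq S$ is impossible, so $\theta$ is $M$-invariant. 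As $\mathrm{Mult}(S)\cong\mathbb{Z}_2$, the character-triple machinery gives $\mathrm{cd}(M\mid\theta)=\theta(1)\cdot D$, where $D=\mathrm{cd}(\mathrm{PSL}_2(q))$ if $\theta$ extends to $M$ and $D$ is the set of degrees of the irreducible characters of $\mathrm{SL}_2(q)$ that are non-trivial on its centre otherwise; in either case $q-1,q+1\in D$, so $\theta(1)(q-1),\theta(1)(q+1)\in\mathrm{cd}(M)$, and since $\Delta(M)$ is a subgraph of $\Delta(G)$ (Lemma \ref{fraction}) we get $w\sim 2$, $w\sim r$, $w\sim s$. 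If instead $w\in\pi(G/R(G))-\pi(S)$, then $w$ divides $[G/R(G):(G/R(G))\cap\mathrm{PGL}_2(q)]$ (as $w\nmid|\mathrm{PGL}_2(q)|$) and $q=u^\alpha$ with $\alpha\geqslant 2$, so Lemma \ref{lw} furnishes characters of $G$ of degrees $(q-1)[G/R(G):(G/R(G))\cap\mathrm{PGL}_2(q)]$ and $(q+1)[G/R(G):(G/R(G))\cap\mathrm{PGL}_2(q)]$, again giving $w\sim 2,r,s$.

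The rest is combinatorial. Because $w_1,w_2$ are each adjacent to $2,r,s$ and $2\sim r$, $2\sim s$, the $K_4$-freeness of $\Delta(G)$ applied to $\{2,r,w_1,w_2\}$ and to $\{2,r,s,w_1\}$ forces $w_1\not\sim w_2$ and $r\not\sim s$; hence the induced subgraph on $\{2,r,s,w_1,w_2\}$ is exactly the graph in which $2$ is joined to all of $r,s,w_1,w_2$ together with the $4$-cycle $r-w_1-s-w_2-r$, which is connected of diameter $2$. The remaining vertex $u$ lies in a triangle $\{u,a,b\}$ with $a\sim b$ and $a,b\in\{2,r,s,w_1,w_2\}$. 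If $2\in\{a,b\}$, then $2$ is adjacent to all five of the other vertices, so $\mathrm{diam}(\Delta(G))\leqslant 2$. Otherwise $\{a,b\}$ is one of $\{r,w_1\},\{r,w_2\},\{s,w_1\},\{s,w_2\}$, and in each of these cases one checks directly that $\{a,b\}$ together with the $\Delta(G)$-neighbourhoods of $a$ and of $b$ covers $\{2,r,s,w_1,w_2\}$; hence $d_{\Delta(G)}(u,x)\leqslant 2$ for every vertex $x$, which combined with the fact that the induced subgraph on the other five vertices has diameter $2$ again gives $\mathrm{diam}(\Delta(G))\leqslant 2$. In either case this contradicts $\mathrm{diam}(\Delta(G))=3$, so $\Delta(S)\ncong K_1\cup((K_1\cup K_1)\ast K_1)$.

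I expect the main technical obstacle to be the claim in the second paragraph that $q-1$ and $q+1$ survive in $\mathrm{cd}(M\mid\theta)$ regardless of whether $\theta$ extends to $M$: this rests on the precise fact that $\mathrm{Mult}(\mathrm{PSL}_2(q))\cong\mathbb{Z}_2$ for $q\neq 9$ together with the explicit character degrees of $\mathrm{SL}_2(q)$, both $q-1$ and $q+1$ occurring among its faithful irreducible degrees for every odd $q\geqslant 5$. The bookkeeping distinguishing the two ways a prime of $\rho(G)-\pi(S)$ can arise (inside $\pi_R$ versus inside $\pi(G/R(G))$) is routine once Lemmas \ref{sen} and \ref{lw} are available.
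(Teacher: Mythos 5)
Your proposal is correct and follows essentially the same route as the paper: the core step --- using Lemma \ref{sen} (whose conclusion $\Delta(S)\cong K_1\cup K_3$ is impossible here, forcing $\theta$ to be $M$-invariant), Lemma \ref{lw}, and the fact that $\mathrm{SL}_2(u^{\alpha})$ is the Schur representation of $S$ to show that both primes of $\rho(G)-\pi(S)$ are adjacent to every prime of $\pi(u^{2\alpha}-1)$ --- is exactly the paper's argument. Only the combinatorial finish differs cosmetically: the paper observes that $u$ has a unique neighbour $x\neq 2$ and uses Lemma \ref{part} to force $\Delta(G)[\rho(G)-\{x,u\}]\cong K_4$, whereas you use the two-clique (two-triangle) partition to force $\mathrm{diam}(\Delta(G))\leqslant 2$; both are valid, though you should delete the parenthetical suggesting $u$ is isolated ``hence in $\Delta(G)$'' --- only positive adjacencies lift from $\Delta(S)$ to $\Delta(G)$, as your own later case analysis in fact presumes.
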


\begin{proof}
On the contrary, assume that $\Delta(S)\cong K_1 \cup ((K_1\cup K_1)\ast K_1)$. Since $|\rho(G)|=6$, for some distinct primes $p_1$ and $p_2$, $\rho(G)-\pi(S)=\{p_1,p_2\}$. Using Lemmas \ref{lw} and \ref{sen}, and this fact that $\rm{SL}_2(u^\alpha)$ is the Schur representation of $S$,  $p_1$ and $p_2$ are adjacent to all vertices in $\pi(u^{2\alpha}-1)$. Thus as $\rm{diam}(\Delta(G))=3$, there exists unique $x \in \rho(G)-\{2,u\}$ such that $x$ and $u$ are adjacent vertices in $\Delta(G)$. Let $a,b\in \rho(G)-\{x,u\}$ be non-adjacent vertices in $\Delta(G)$. Then as $\rm{diam}(\Delta(G))=3$, the induced subgraph of $\Delta(G)^c$ on $\{a,b,u\}$ is a copy of $K_3$. It is a contradiction with Lemma \ref{part}. Hence the induced subgraph of $\Delta(G)$ on $\rho(G)-\{x,u\}$ is a copy of $K_4$ and it is a contradiction.
\end{proof}

\begin{lemma}\label{seven6}
 $\Delta(S)\ncong K_1\cup K_3$.
\end{lemma}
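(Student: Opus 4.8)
\emph{Proof proposal.} The plan is to argue by contradiction: assume $\Delta(S)\cong K_1\cup K_3$ and derive that the vertex $2$ is adjacent to every other vertex, so that $\rm{diam}(\Delta(G))\leqslant 2$. I would first pin down the arithmetic. By Lemma \ref{chpsl}, the isolated vertex of $\Delta(S)$ is $\{u\}$, the triangle is $\pi(q^2-1)=\{2,r,s\}$ with $r,s$ distinct odd primes, and one of $q\mp 1$ is a power of $2$; write $q+\delta=2^{a}$ with $\delta\in\{\pm 1\}$, so that $\pi(q-\delta)=\{2,r,s\}$, and note $a\geqslant 4$ since $q\geqslant 11$. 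Writing $q=u^{\alpha}$, Lemma \ref{equality} applied to whichever of $u^{\alpha}+1=2^{a}$ or $2^{a}+1=u^{\alpha}$ holds forces $\alpha=1$ (the value $q=9$ being excluded because $q\geqslant 11$), so $q=u$ is prime. Hence $\rm{Out}(S)$ is a $2$-group, $\pi(G/R(G))=\pi(S)$ has exactly four elements, and since $|\rho(G)|=6$ we get $\pi_R=\{p_1,p_2\}$ for distinct primes $p_1,p_2\notin\pi(S)$. From now on I would work inside the normal subgroup $M$ with $M/R(G)\cong S$, using $\Delta(S)\subseteq\Delta(M)\subseteq\Delta(G)$ and that $\Delta(M)$ is $K_4$-free; Lemmas \ref{fraction}, \ref{good}, \ref{Frobenius} and \ref{sen} are applied with $M$ playing the role of ``$G$''.

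Next I would treat the two primes of $\pi_R$. First, $p_1$ and $p_2$ are non-adjacent in $\Delta(G)$: if $p_1p_2\mid\chi(1)$ for some $\chi\in\rm{Irr}(G)$, a constituent $\theta$ of $\chi_{R(G)}$ satisfies $p_1p_2\mid\theta(1)$ by Lemma \ref{fraction}, and Lemma \ref{good} provides $\chi'\in\rm{Irr}(M|\theta)$ with $\chi'(1)/\theta(1)$ divisible by two distinct primes of $\pi(S)$, so $\chi'(1)$ has four prime divisors — a $K_4$, impossible. Now for $b\in\{p_1,p_2\}$ pick $\theta_b\in\rm{Irr}(R(G))$ with $b\mid\theta_b(1)$ and set $I_b:=I_M(\theta_b)$, $N_b:=I_b/R(G)$. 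If $N_b=S$, then $\theta_b$ is $M$-invariant and, inspecting the Schur cover $\rm{SL}_2(q)$ of $S$, one gets $\theta_b(1)(q-\delta)\in\rm{cd}(M|\theta_b)$ — the degree $q-\delta$ is both a degree of $\rm{PSL}_2(q)$ and a faithful degree of $\rm{SL}_2(q)$, so it occurs whether or not $\theta_b$ extends to $M$ — whence $\{b,2,r,s\}$ spans a $K_4$, a contradiction. So $N_b\neq S$, and Lemma \ref{sen} applies ($|\pi(S)|=4$ and $b\in\rho(M)-\pi(S)$ divides $\theta_b(1)$): $N_b$ is either a Frobenius group with elementary abelian $u$-kernel for which $\theta_b$ does not extend to $I_b$, or a subgroup of a dihedral group.

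For the last step I would use that every degree of $\rm{cd}(M|\theta_b)$ is divisible by $\theta_b(1)[S:N_b]$ (Clifford correspondence), and that the maximal dihedral subgroups of $\rm{PSL}_2(q)$ have orders $q-\delta=2m$ (with $m$ odd, $\pi(m)=\{r,s\}$) and $q+\delta=2^{a}$, of indices $q2^{a-1}$ and $qm$ in $S$. Every subgroup of the dihedral group of order $2^{a}$, and every proper subgroup of the dihedral group of order $q-\delta$ other than its cyclic subgroup of order $m$, has index in $S$ divisible by $u$ and by a prime of $\{r,s\}$, so $\theta_b(1)[S:N_b]$ would then have four prime divisors — a $K_4$; hence $N_b$ is the dihedral group of order $q-\delta$ or the cyclic group of order $m$, and in either case $\theta_b(1)[S:N_b]$ is divisible by $2ub$. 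In the Frobenius alternative Lemma \ref{Frobenius}(b) makes every degree of $\rm{cd}(M|\theta_b)$ divisible by $u(q+1)\theta_b(1)$, which is impossible when $\delta=-1$ (it then has five prime divisors) and otherwise is again divisible by $2ub$. So in every case there is $\mu\in\rm{cd}(M|\theta_b)$ with $\{2,u,b\}\subseteq\pi(\mu)$; thus in $\Delta(G)$ the prime $2$ is adjacent to $u$ and to each of $p_1,p_2$, and since $2,r,s$ form a triangle, $2$ is adjacent to all five other vertices, forcing $\rm{diam}(\Delta(G))\leqslant 2$ — a contradiction. The chief obstacle is the bookkeeping in this last step — confirming that in every surviving configuration $[S:N_b]$ introduces exactly the primes $\{2,u\}$ and none of $\{r,s\}$ — together with the careful handling of the order-$2$ Schur multiplier of $\rm{PSL}_2(q)$ in the case $N_b=S$.
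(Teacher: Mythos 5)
Your proposal is correct and follows essentially the same route as the paper: reduce to $\pi_R=\{p_1,p_2\}$, eliminate $N_b=S$ via the Schur cover $\mathrm{SL}_2(q)$, then use Lemma \ref{sen} together with Lemma \ref{Frobenius} and Clifford's Theorem to force, for each $b\in\pi_R$, a degree in $\mathrm{cd}(M|\theta_b)$ divisible by $2ub$, whence $2$ is adjacent to every other vertex and $\mathrm{diam}(\Delta(G))\leqslant 2$, a contradiction. The only minor deviation is how you get $|\pi_R|=2$ (showing $\alpha=1$ via Lemma \ref{equality} so that $\mathrm{Out}(S)$ is a $2$-group, where the paper instead applies Lemma \ref{lw} to exclude primes of $\rho(G)-\pi(S)$ dividing $[G:M]$); both are fine.
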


\begin{proof}
On the contrary, assume that $\Delta(S)\cong K_1\cup K_3$. By Lemma \ref{chpsl}, for some $\epsilon \in \{\pm 1\}$, $\pi(u^\alpha-\epsilon)=\{2\}$ and $|\pi(u^\alpha+\epsilon)|=3$. If $t\in \rho(G)-\pi(S)$ divides $[G:M]$, then by Lemma \ref{lw}, $\Delta(G)[\pi(t(u^\alpha+\epsilon))]\cong K_4$ which is impossible. Hence as $|\pi(S)|=4$ and $|\rho(G)|=6$, $|\pi_R|=2$. Thus there exist distinct primes $p$ and $p^\prime$ so that $\pi_R=\{p,p^\prime\}$. Let $b\in \pi_R$. There exists $\theta_b\in \rm{Irr}(R(G))$ so that $b$ divides $\theta_b(1)$. Suppose $I_b:=I_M(\theta_b)$ and $N_b:=I_b/R(G)$.
 If $N_b=S$, then as $\rm{SL}_2(u^\alpha)$ is the Schur representation of $S$, $\Delta(G)[\pi(b(u^\alpha+\epsilon))]$ is a copy of $K_4$ and we obtain a contradiction.
 Thus by Lemma \ref{sen}, $N_b$ is contained in a dihedral group, or $N_b$ is a Frobenius group whose kernel is an elementary abelian $u$-group and $\theta_b$ is not extendible to $I_b$. Hence as $\Delta(G)$ is $K_4$-free, by Clifford's Theorem and Lemma \ref{Frobenius}, we deduce that some $m\in \rm{cd}(M|\theta_b)$ is divisible by  $2ub$.
Therefore as $2\in \pi(q+\epsilon)$, the vertex $2$ is adjacent to all vertices in $\rho(G)-\{2\}$ and $\rm{diam}\Delta(G)\leqslant 2$. It is a contradiction.
\end{proof}

\begin{lemma}\label{eight6}
 $\Delta(S)\ncong K_1 \cup ((K_1\cup K_2)\ast K_1)$.
\end{lemma}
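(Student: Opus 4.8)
The plan is to argue by contradiction: assume $\Delta(S)\cong K_1 \cup ((K_1\cup K_2)\ast K_1)$ and exhibit a copy of $K_4$ in $\Delta(G)$. First I would read off from Lemma~\ref{chpsl}(b)(ii) that this shape forces $|\pi(S)|=5$, that $\{u\}$ is the isolated component, and that after relabelling there is $\epsilon\in\{\pm 1\}$ with $|\pi(u^\alpha-\epsilon)|=3$ and $|\pi(u^\alpha+\epsilon)|=2$. Since $|\rho(G)|=6$ by Lemma~\ref{as}, the sixth prime comes either from $\rho(R(G))$ or from $[G:M]$. A prime $t\in\pi(G/R(G))-\pi(S)$ would be odd (as $2\in\pi(S)$), hence would divide the field--automorphism index $[G/R(G):G/R(G)\cap\mathrm{PGL}_2(u^\alpha)]$; then Lemma~\ref{lw}, which applies since $u^\alpha\geqslant 11$ and $\alpha\geqslant t\geqslant 3$ (so $u^\alpha\neq 9$), produces a character of $G/R(G)$, and hence of $G$, of degree divisible by $t(u^\alpha-\epsilon)$, whose four distinct prime divisors span a $K_4$ --- a contradiction. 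So $\pi(G/R(G))=\pi(S)$, $|\pi_R|=1$, and I may fix the unique prime $b\in\rho(R(G))-\pi(S)$ and some $\theta\in\mathrm{Irr}(R(G))$ with $b\mid\theta(1)$.

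Next I would pass to $M$, noting that $R(M)=R(G)$ and $M/R(G)\cong S\cong\mathrm{PSL}_2(u^\alpha)$ with $|\pi(S)|=5$. Since $b\in\rho(M)-\pi(S)$ divides $\theta(1)$ and $\Delta(M)\subseteq\Delta(G)$ is $K_4$-free, Lemma~\ref{sen} applies to $(M,\theta)$ with $N:=I_M(\theta)/R(G)$. The crucial observation is that our graph $\Delta(S)$ is \emph{not} isomorphic to $K_1\cup K_3$ (it has five vertices and a four-vertex component), so the conclusion of Lemma~\ref{sen} is impossible; hence its hypothesis $N\neq S$ must fail, i.e.\ $\theta$ is $M$-invariant. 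Finally, since the Schur cover of $S$ is $\mathrm{SL}_2(u^\alpha)$, which has an irreducible character of degree $u^\alpha-\epsilon$, Clifford theory (via the associated character triple) yields $\chi\in\mathrm{Irr}(M\mid\theta)$ with $\chi(1)=(u^\alpha-\epsilon)\theta(1)$; this degree is divisible by the four distinct primes in $\{b\}\cup\pi(u^\alpha-\epsilon)$, so $\Delta(M)\subseteq\Delta(G)$ contains a $K_4$, the desired contradiction.

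I expect the only delicate points to be the bookkeeping needed to invoke Lemma~\ref{sen}: checking $R(M)=R(G)$ so that $M$ itself satisfies the hypotheses of that lemma, checking that $b$ genuinely lies in $\rho(M)-\pi(S)$, and observing that ``$\Delta(S)\not\cong K_1\cup K_3$'' is precisely the feature that rules out the branch $N\neq S$ --- so that, in contrast with Lemma~\ref{seven6}, no separate analysis of dihedral or Frobenius inertia quotients is required. The treatment of $[G:M]$ via Lemma~\ref{lw} is routine and mirrors the opening of Lemma~\ref{seven6}, and once $N=S$ is forced the $K_4$ is immediate from the existence of a degree-$(u^\alpha-\epsilon)$ character of $\mathrm{SL}_2(u^\alpha)$.
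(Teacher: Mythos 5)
Your proposal is correct and follows essentially the same route as the paper's (very compressed) proof: rule out an extra prime dividing $[G:M]$ via Lemma \ref{lw}, then for the unique prime $b\in\pi_R$ use Lemma \ref{sen} (noting $\Delta(S)\ncong K_1\cup K_3$) to force $\theta$ to be $M$-invariant, and obtain a $K_4$ from the degrees of the Schur cover $\mathrm{SL}_2(u^\alpha)$ together with $b$. The only difference is your opposite labelling of $\epsilon$ and the extra bookkeeping you spell out, which the paper leaves implicit.
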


\begin{proof}
On the contrary, assume that $\Delta(S)\cong K_1 \cup ((K_1\cup K_2)\ast K_1)$. Using Lemma \ref{chpsl}, there exists $\epsilon\in \{\pm1\}$ so that $|\pi(u^\alpha-\epsilon)|=2$ and $|\pi(u^\alpha+\epsilon)|=3$.
 Also as $|\pi(S)|=5$ and $|\rho(G)|=6$, for some prime $p$, $\rho(G)-\pi(S)=\{p\}$. Thus using Lemmas \ref{lw} and \ref{sen}, and this fact that $\rm{SL}_2(u^\alpha)$ is the Schur representation of $S$, $\Delta(G)[\pi(p(u^\alpha+\epsilon))]$ is a copy of $K_4$  and we obtain a contradiction.
\end{proof}

\begin{lemma}\label{nine6}
$\Delta(S)\ncong K_1 \cup ((K_2\cup K_2)\ast K_1)$.
\end{lemma}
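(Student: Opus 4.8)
The plan is to argue by contradiction, assuming $\Delta(S)\cong K_1 \cup ((K_2\cup K_2)\ast K_1)$. By Lemma \ref{chpsl}, this forces $S\cong \rm{PSL}_2(q)$ with $q$ odd, where (for some $\epsilon\in\{\pm1\}$) both $|\pi(q-\epsilon)|$ and $|\pi(q+\epsilon)|$ equal $3$, with $2\in\pi(q-\epsilon)\cap\pi(q+\epsilon)$ in the sense that writing $\pi(q^2-1)=\{2\}\cup M\cup P$ with $|M|=|P|=2$, the vertex $2$ is joined to all of $M\cup P$ and no prime of $M$ is adjacent to any prime of $P$; the isolated vertex is $\{u\}$. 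Thus $|\pi(S)|=6$, and since $|\rho(G)|=6$ we get $\pi(S)=\rho(G)$ and $\pi_R=\emptyset$. In particular $u$ is an isolated vertex of $\Delta(S)$, but $\Delta(G)$ has diameter $3$, so $u$ cannot be isolated in $\Delta(G)$: some prime must become adjacent to $u$ when we pass from $\Delta(S)$ to $\Delta(G)$.

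First I would dispose of the possibility that $G\neq M$, i.e. that $\Delta(G)$ gains edges from outer automorphisms. If some prime $t$ divides $[G:M]$, then since $\alpha\geqslant 2$ (because $q$ odd with $|\pi(q\pm1)|=3$ forces $q$ large, hence $q\neq 9$ and $f\geqslant 2$ can be arranged — or handled directly), Lemma \ref{lw} produces a character of degree $(q+1)[G:G\cap \rm{PGL}_2(q)]$ or $(q-1)[G:G\cap\rm{PGL}_2(q)]$, and combining this with the complete subgraphs on $\pi(q+1)$ and $\pi(q-1)$ one extracts a $K_4$. (If $\alpha=1$ there are no outer automorphisms of odd-prime order beyond the diagonal one, and the diagonal automorphism making $\rm{PGL}_2(q)$ is handled by the same Lemma \ref{lw} computation via $\rm{PGL}_2(q)$ having an irreducible character of degree $q+1$ and $q-1$.) So $G=M$ and $G/R(G)=S$. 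Then for the new edge at $u$ we take $\chi\in\rm{Irr}(G)$ with $u$ and some prime $w$ dividing $\chi(1)$, and a constituent $\theta$ of $\chi_{R(G)}$ with inertia data $I, N$. Since $\pi_R=\emptyset$, $R(G)$ contributes no new primes, so $w\in\pi(S)$ and $\theta(1)$ is a $\pi(S)$-number; in fact since $R(G)$ is a $u'$-number after we notice it cannot carry $u$ either (every prime of $\rho(R(G))$ already lies in $\pi(S)$, and one checks $u\notin\rho(R(G))$ because $\Delta(R(G))\subseteq\Delta(G)$ would then join $u$ to something — actually this needs a short argument, see below), we may as well suppose $\theta(1)=1$, i.e.\ reduce to analysing $\rm{cd}(S)$ directly. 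But $\rm{cd}(\rm{PSL}_2(q))=\{1,q,q\pm1,(q\pm1)/2\text{-type pieces}\}$, and $u$ divides only the character degree $q$, whose other prime divisors are again powers of $u$; so $u$ is isolated in $\Delta(S)=\Delta(G)$, contradicting $\rm{diam}(\Delta(G))=3$.

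Making the reduction ``$\theta$ may be taken linear'' rigorous is the main obstacle, and I expect to handle it exactly as in the preceding lemmas (\ref{seven6}, \ref{eight6}): take $b\in\pi_R$ if $\pi_R\neq\emptyset$ — but here $\pi_R=\emptyset$, which is the easy case — so instead the work is to show $R(G)$ cannot supply the edge at $u$. For this, pick $\theta\in\rm{Irr}(R(G))$ with $u\mid\theta(1)$ if one exists; then $S$ must act on the corresponding data, and via Dickson's list together with Lemma \ref{sen} (whose hypothesis ``$|\pi(S)|\geqslant 4$'' holds and the $\theta(1)$-divisibility clause is vacuous since $|\pi(S)|=6>5$), $N\neq S$ would force $\Delta(S)\cong K_1\cup K_3$, contradicting our $\Delta(S)\cong K_1\cup((K_2\cup K_2)\ast K_1)$; hence $N=S$, and since $\rm{SL}_2(q)$ is the Schur cover, Gallagher's Theorem gives $m\theta(1)\in\rm{cd}(G|\theta)$ for all $m\in\rm{cd}(S)$, so $\Delta(G)[\pi((q+\epsilon)\theta(1))]$ already contains $K_4$ once $u\mid\theta(1)$ and $|\pi(q+\epsilon)|=3$ — a contradiction. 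Therefore no irreducible character of $R(G)$ has degree divisible by $u$, $u$ is isolated in $\Delta(G)$, and the contradiction with $\rm{diam}(\Delta(G))=3$ completes the proof.
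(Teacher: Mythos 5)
Your strategy (force $\pi_R=\emptyset$, then show the isolated vertex $u$ of $\Delta(S)$ cannot acquire a neighbour in $\Delta(G)$) is the right frame, but the decisive step has a genuine gap. From ``no $\theta\in \rm{Irr}(R(G))$ has degree divisible by $u$'' you conclude that $u$ is isolated in $\Delta(G)$; this does not follow. If $\chi\in \rm{Irr}(G)$ with $ux\mid \chi(1)$ and $\theta$ is a constituent of $\chi_{R(G)}$, then $\chi(1)/\theta(1)$ divides $[G:R(G)]$, which is divisible by $u$, so the prime $u$ can enter through the quotient (through the Steinberg-type degree $q$) while the second prime $x$ sits inside $\theta(1)$. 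This is exactly the configuration the paper's proof must (and does) kill: after observing that one may take $x\neq 2$ (if $u$ were adjacent to $2$, then since $2$ is joined to all of $\pi(q^2-1)$ the diameter would drop to $2$), that odd primes of $\pi(S)$ do not divide $[G:M]$ (Lemma \ref{lw}), and that $\theta$ is $M$-invariant (Lemma \ref{sen}, which you also invoke), one writes $\varphi(1)=\theta(1)c$ with $c$ a degree of the Schur cover $\rm{SL}_2(q)$ and $ux\mid\theta(1)c$; in the case $u\nmid\theta(1)$ one gets $c=q$, hence $x\mid\theta(1)$, and then the degree $\theta(1)(u^\alpha-\epsilon)\in\rm{cd}(M|\theta)$ (where $x\mid u^\alpha+\epsilon$) produces a $K_4$ on $\pi(x(u^\alpha-\epsilon))$. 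Your argument never confronts this case, and your assertion ``$\Delta(S)=\Delta(G)$'' is precisely what cannot be assumed: extra edges of $\Delta(G)$ are what the whole lemma is about, and eliminating $u\mid\theta(1)$ alone does not eliminate them.

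Two secondary points. First, your preliminary reduction to $G=M$ is not correct as stated: if $2\mid[G:M]$ (e.g.\ $G/R(G)\cong\rm{PGL}_2(q)$, which has trivial index over $\rm{PGL}_2(q)$ in Lemma \ref{lw}), the degrees $(q\pm1)[G:G\cap\rm{PGL}_2(q)]$ carry no new prime and no $K_4$ arises, so the even part of $[G:M]$ cannot be excluded. The paper only excludes the odd primes of $\pi(S)$ from $[G:M]$, which suffices because the relevant primes $u$ and $x$ are odd and Lemma \ref{fraction} then pushes $ux$ into $\varphi(1)$ for $\varphi\in\rm{Irr}(M)$ under $\chi$; your reduction should be repaired the same way. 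Second, Gallagher's Theorem requires $\theta$ to extend, and since the Schur multiplier of $S$ is of order $2$ for odd $q\geqslant 11$, $M$-invariance alone does not give an extension; the correct statement, which is what the paper means by invoking the Schur representation $\rm{SL}_2(u^\alpha)$, is that $\rm{cd}(M|\theta)\subseteq\{\theta(1)c\mid c\in\rm{cd}(\rm{SL}_2(q))\}$ with both $\theta(1)(q-1)$ and $\theta(1)(q+1)$ occurring. With these repairs your case $u\mid\theta(1)$ goes through, but the missing case $u\nmid\theta(1)$, $x\mid\theta(1)$ still has to be argued as above before the final contradiction with $\rm{diam}(\Delta(G))=3$ is available.
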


\begin{proof}
On the contrary, assume that $\Delta(S)\cong K_1 \cup ((K_2\cup K_2)\ast K_1)$. Using Lemma \ref{chpsl}, $|\pi(u^\alpha-1)|=|\pi(u^\alpha+1)|=3$. Also by Lemma \ref{lw},  no prime in $\pi(S)-\{2\}$ is in $\pi([G:M])$. Since $\Delta(S)\cong K_1 \cup ((K_2\cup K_2)\ast K_1)$ and $\rm{diam}\Delta(G)=3$, there exists $x\in \pi(u^{2\alpha}-1)-\{2\}$ in which $u$ and $x$ are adjacent vertices in $\Delta(G)$.
 Thus there exists $\chi\in \rm{Irr}(G)$ so that $ux$ divides $\chi(1)$. Let $\varphi\in \rm{Irr}(M)$ and $\theta\in \rm{Irr}(R(G))$ be constituents of $\chi_M$ and $\varphi_{R(G)}$, respectively. Then using Lemma \ref{fraction} and this fact that no prime in $\pi(S)-\{2\}$ divides $[G:M]$, we deduce that $ux$ divides $\varphi(1)$. Note that for some $\epsilon \in \{\pm1\}$, $x$ divides $u^\alpha+\epsilon$. By Lemma \ref{sen}, $\theta$ is $M$-invariant and $\varphi(1)\in \rm{cd}(M|\theta)$. Hence as $\rm{SL}_2(u^\alpha)$ is the Schur representation of $S$, the induced subgraph of $\Delta(G)$ on $\pi(u(u^\alpha-\epsilon))$ or $\pi(x(u^\alpha-\epsilon))$ is a copy of $K_4$ which is impossible.
 \end{proof}
\section*{Acknowledgements}
This research was supported in part
by a grant  from School of Mathematics, Institute for Research in Fundamental Sciences (IPM).

 I would like to express
my gratitude to dear Prof. Gerhard Hiss and dear Prof. Mark Lewis for valuable comments which
improved my manuscript.

\end{document}